\documentclass[a4paper,11pt]{amsart}
\usepackage{amsfonts,bm}
\usepackage{amssymb}
\usepackage[utf8]{inputenc}
\usepackage{amsmath}
\usepackage{pdflscape}
\usepackage{float}
\usepackage{tikz}
\usetikzlibrary{arrows.meta,positioning}
\usepackage{easyReview}
\usepackage{graphicx}
\usepackage[colorlinks=true, linkcolor=red, citecolor=blue]{hyperref}

\usepackage[]{epsfig}
\usepackage[]{pstricks}
\usepackage{tikz}

\newtheorem{theorem}{Theorem}[section]
\newtheorem{proposition}[theorem]{Proposition}
\newtheorem{lemma}[theorem]{Lemma}
\newtheorem{corollary}[theorem]{Corollary}

\newtheorem{remark}{Remark}
\theoremstyle{remark}

\usepackage{mathrsfs}

\numberwithin{equation}{section}
\makeatletter
\@namedef{subjclassname@2010}{\textup{2020} Mathematics Subject Classification}
\makeatother

\begin{document}
	
	\pagenumbering{arabic}	
	\title[Linearized Whitham-Boussinesq System]{Optimal Polynomial Stabilization of the Linearized Periodic Whitham--Boussinesq System}
	\author[Capistrano-Filho]{Roberto de A. Capistrano-Filho*}
	\author[Roqueta]{William Artiles Roqueta}
	\address{Departamento de Matem\'atica,  Universidade Federal de Pernambuco (UFPE), 50740-545, Recife (PE), Brazil.}
	\email{\url{roberto.capistranofilho@ufpe.br}}
	\email{\url{william.roqueta@ufpe.br}}

		\thanks{*Corresponding author: \url{roberto.capistranofilho@ufpe.br}}
		
		\subjclass[2010]{Primary: 35B35, 93D15; Secondary: 35B40, 47D06, 76B15.}

\keywords{Whitham--Boussinesq system, polynomial stabilization,
localized damping, resolvent estimates, high-frequency quasimodes,
nonlocal dispersive equations}	
\date{\today}
	
\begin{abstract}
We study the stabilization of the linearized periodic Whitham--Boussinesq system on the one-dimensional torus. We establish the well-posedness of the conservative and damped dynamics in the natural energy space and describe the spectral structure of the conservative generator, whose frequencies exhibit sublinear growth of order $|k|^{1/2}$ at high frequency. We then prove strong stability of the damped semigroup and obtain a high-frequency resolvent estimate with linear growth in the spectral parameter. Under genuinely localized damping, a family of high-frequency quasimodes provides the matching lower bound and shows that this resolvent growth is optimal. By the Borichev--Tomilov theorem \cite{BorichevTomilov2010}, we deduce a $t^{-1}$ decay rate for the semigroup on the domain of the generator, corresponding to a $t^{-2}$ decay rate for the energy. Under the same localization assumption, these polynomial decay rates are optimal.
\end{abstract}
	\maketitle
	\tableofcontents
	
\section{Introduction}
\subsection{Physical background}

The Whitham--Boussinesq system belongs to a class of full-dispersion
models for the bidirectional propagation of surface water waves in an
inviscid, incompressible, and irrotational fluid of finite depth. Its
main feature is that the exact linear dispersion relation of the
water-wave problem is retained, while the nonlinear terms are described
within a weakly nonlinear Boussinesq-type approximation. We briefly
recall the physical setting leading to the model considered in this
work.

Let the fluid occupy the time-dependent domain
\[
\Omega_t
=
\left\{
(x,y)\in\mathbb R^2:
-h_0<y<h(x,t)
\right\},
\]
where \(h_0>0\) denotes the undisturbed depth and \(h=h(x,t)\) is the
free-surface elevation. Denoting the fluid velocity by
\(\mathbf u=(u,v)\), the motion is governed by the incompressible Euler
equations
\begin{equation*}
\begin{cases}
u_x+v_y=0,
\\[1mm]
\mathbf u_t+(\mathbf u\cdot\nabla)\mathbf u
=
-\dfrac{1}{\rho}\nabla p-g\mathbf e_y,
\end{cases}
\qquad
-h_0<y<h(x,t),
\end{equation*}
where \(\rho\) denotes the density of the fluid and \(g>0\) is the
gravitational acceleration. The boundary conditions at the free surface and at the flat bottom are
\begin{equation}\label{eq:physical-boundary-conditions}
\begin{cases}
h_t+uh_x-v=0,
& y=h(x,t),
\\[1mm]
p=p_{\mathrm{atm}},
& y=h(x,t),
\\[1mm]
v=0,
& y=-h_0.
\end{cases}
\end{equation}
The first condition in
\eqref{eq:physical-boundary-conditions} is the kinematic boundary
condition, expressing that the free surface moves with the fluid. The
second is the dynamic boundary condition, corresponding to continuity
of pressure across the interface, while the last condition expresses
the impermeability of the flat bottom.

Since the flow is assumed to be irrotational, there exists a velocity
potential \(\phi\) such that $u=\phi_x$ and $v=\phi_y$. The incompressibility condition then gives
\[
\Delta\phi=0
\qquad
\text{in }\Omega_t.
\]
Moreover, the Euler equations reduce to Bernoulli's equation
\[
\phi_t
+
\frac12|\nabla\phi|^2
+
gy
+
\frac{p}{\rho}
=
C(t).
\]
After absorbing \(C(t)\) into the velocity potential and using
\(p=p_{\mathrm{atm}}\) at the free surface, one obtains
\[
\phi_t
+
\frac12
\left(
\phi_x^2+\phi_y^2
\right)
+
gh
=
0
\qquad
\text{on }y=h(x,t).
\]

In the absence of external forcing and dissipative mechanisms, the
water-wave problem possesses a conservative Hamiltonian structure. In
particular, the total mechanical energy
\[
\mathcal E_{\mathrm{Euler}}(t)
=
\int_{\Omega_t}
\left[
\frac{\rho}{2}
\left(
u^2+v^2
\right)
+
\rho gy
\right]
\,dx\,dy
\]
is conserved, up to an irrelevant additive constant in the
gravitational potential energy. This conservative structure is
inherited by the linearized Whitham--Boussinesq dynamics considered
below.

To obtain a dimensionless formulation, let \(\lambda\) be a
characteristic wavelength and \(a\) a characteristic wave amplitude.
We introduce the dimensionless parameters
\[
\alpha=\frac{a}{h_0},
\qquad
\beta=\frac{h_0}{\lambda}.
\]
Here \(\alpha\) measures the strength of the nonlinearity, while
\(\beta\) measures the ratio between the fluid depth and the
characteristic wavelength.

After the scaling
\[
x=\lambda x',
\qquad
y=\lambda y',
\qquad
t=\frac{\lambda}{\sqrt{gh_0}}\,t',
\qquad
h=ah',
\]
together with the corresponding rescaling of the velocity potential,
the dimensionless water-wave problem can be written in the form
\begin{equation}\label{eq:dimensionless-water-wave}
\begin{cases}
\phi_{xx}+\phi_{yy}=0,
& -\beta<y<\alpha\beta h(x,t),
\\[1mm]
\phi_t+
\dfrac{\alpha}{2}
\left(
\phi_x^2+\phi_y^2
\right)+h=0,
& y=\alpha\beta h(x,t),
\\[1mm]
h_t+\alpha\phi_xh_x-\dfrac1\beta\phi_y=0,
& y=\alpha\beta h(x,t),
\\[1mm]
\phi_y=0,
& y=-\beta.
\end{cases}
\end{equation}

Reformulating the water-wave problem \eqref{eq:dimensionless-water-wave} on a fixed strip and expressing
the harmonic extension of the velocity potential through Fourier
multipliers naturally introduces the finite-depth factor
\(\tanh(\beta k)\). Retaining the full linear dispersion together with
the leading weakly nonlinear contributions leads to a
Whitham--Boussinesq system of the form
\begin{equation}\label{eq:nonlinear-whitham-boussinesq-introduction}
\begin{cases}
u_t
+
h_x
+
\alpha uu_x
+
\alpha\beta^2(T_\beta u)(T_\beta u)_x
=0,
\\[1mm]
h_t
-
T_\beta u
+
\alpha
\left\{
\beta^2T_\beta\bigl(hT_\beta u\bigr)+uh
\right\}_x
=0.
\end{cases}
\end{equation}
Thus, the model combines a Boussinesq-type bidirectional nonlinear
structure with the finite-depth dispersion inherited from the full
water-wave equations.

The nonlinear system
\eqref{eq:nonlinear-whitham-boussinesq-introduction} serves here only
as the physical motivation for the model. The present work is entirely
devoted to the stabilization properties of its linearization around
the zero equilibrium.

\subsection{On Whitham--Boussinesq related equations}

The full-dispersion approach underlying Whitham-type models goes back
to Whitham~\cite{Whitham1967}, who formally introduced the Whitham
equation as an alternative to the Korteweg--de Vries equation. The
basic idea is to retain the exact linear dispersion relation of the
water-wave problem while preserving a comparatively simple nonlinear
structure. In this way, the model incorporates dispersive effects
beyond those captured by the classical long-wave approximation. It
was also introduced to describe qualitative features of
large-amplitude water waves that are inaccessible to the KdV equation.

This full-dispersion philosophy was subsequently extended to
bidirectional models, leading naturally to Whitham--Boussinesq
systems. Such models provide an intermediate description between the
full water-wave equations and classical long-wave approximations:
they retain the exact, or an improved, linear dispersion relation of
the water-wave problem while approximating the nonlinear interactions
within a Boussinesq-type framework.

Several analytical aspects of Whitham--Boussinesq type systems have
been investigated in recent years. Dinvay~\cite{Dinvay} established
local well-posedness for a particular bidirectional full-dispersion
system, while Dinvay, Selberg, and Tesfahun~\cite{DinvaySelbergTesfahun}
studied low-regularity well-posedness, dispersive estimates, and global
existence for small initial data in one space dimension. A broader
framework for nonlocal quasilinear systems, including the rigorous
justification of classes of Whitham--Boussinesq systems as
approximations of the water-wave equations, was developed by
Emerald~\cite{Emerald}. Long-time well-posedness on the natural weakly
nonlinear time scale was subsequently addressed by
Paulsen~\cite{Paulsen}. More recently, Pilod, Selberg, Skoglund Taki,
and Tesfahun~\cite{PilodSelbergSkoglundTakiTesfahun} further studied
the lifespan of solutions for both the Whitham equation and a
Whitham--Boussinesq system.

Complementary questions concerning traveling waves and the qualitative
dynamics of Whitham-type models have also received considerable
attention. In particular, the existence of a highest cusped traveling
wave, conjectured by Whitham~\cite{Whitham1967}, was rigorously
established by Ehrnstr\"om and Wahl\'en~\cite{EhrnstromWahlen}.
Klein, Linares, Pilod, and Saut~\cite{KleinLinaresPilodSaut}
combined analytical arguments with extensive numerical simulations to
study the dynamics of the Whitham equation and related full-dispersion
models, including comparisons with the KdV dynamics and a discussion
of the Cauchy problem and numerical behavior of full-dispersion
Boussinesq systems. Nilsson and Wang~\cite{NilssonWang} established
the existence of solitary-wave solutions for a class of
Whitham--Boussinesq systems containing the bidirectional Whitham system
as a particular case, using a constrained variational formulation and
concentration--compactness arguments. Subsequently, Dinvay and
Nilsson~\cite{DinvayNilsson} proved the existence of solitary waves for
a particular bidirectional Whitham--Boussinesq system and obtained an
asymptotic description of these waves through variational and
concentration--compactness methods.

Spectral stability questions for Whitham--Boussinesq type models have
also been investigated. Hur and Pandey~\cite{HurPandey2019} studied a
bidirectional full-dispersion shallow-water model and analyzed the
spectral stability of small-amplitude periodic traveling waves. In
particular, they identified infinitely many collisions of purely
imaginary eigenvalues of the corresponding linearized operator and
related the onset of modulational instability to the underlying
full-dispersion structure. More recently, Creedon, Deconinck, and
Trichtchenko~\cite{CreedonDeconinckTrichtchenko2021} investigated
high-frequency spectral instabilities for small-amplitude periodic
traveling waves of a Boussinesq--Whitham system, combining a
perturbative analysis with numerical computations. Although these works
concern spectral stability about nontrivial periodic traveling waves,
rather than stabilization of the linearized dynamics around the zero
equilibrium, they illustrate the central role played by the
high-frequency spectral structure in Whitham--Boussinesq models.

The problem addressed here is nevertheless of a different nature.
While the works discussed above focus on well-posedness, traveling
waves, or the spectral stability of nonlinear coherent structures, we
study the stabilization of the \emph{linearized periodic dynamics
around the zero equilibrium}. Our main objective is to understand how
the high-frequency dispersive structure interacts with spatially
dependent damping and determines the long-time decay of the
corresponding semigroup.  A central difficulty comes from the sublinear growth of the
dispersion relation. Indeed, the conservative frequencies satisfy
\[
\omega_k\sim\beta^{-1/2}|k|^{1/2},
\qquad |k|\to\infty.
\]
As we shall see, this sublinear high-frequency behavior is directly
reflected in the resolvent of the damped generator and ultimately in
the polynomial decay rate.

Our resolvent analysis is closely related to two strands of the
stabilization literature. On the one hand, we use the high-frequency
stationary estimate of Alazard, Marzuola and Wang
\cite{AlazardMarzuolaWang} for damped fractional wave equations, in
particular for operators of the form
\[
|D|-\tau^2-i\tau\chi
\]
on the torus. This connection is natural because, after a suitable
reduction, the scalar operator arising from the
Whitham--Boussinesq system differs at high frequencies from a constant
multiple of \(|D|\) only by a smoothing Fourier multiplier.

On the other hand, polynomial stabilization in the absence of uniform
geometric damping has been extensively studied for classical damped
wave equations. In particular, Anantharaman and
L\'eautaud~\cite{AnantharamanLeautaud} investigated sharp polynomial
decay rates on the torus when the geometric control condition fails,
showing that the decay is governed by high-frequency resolvent
estimates, trapped trajectories, and the behavior of the damping
coefficient near its zero set. Although the mechanism in the present
problem is different, the obstruction here is tied to the sublinear
dispersion relation and the associated high-frequency structure of the
nonlocal system rather than to the classical geodesic flow, the same
resolvent-based strategy is relevant. In both settings, high-frequency
quasimodes provide lower bounds for the resolvent and identify the
obstruction to exponential stability, while polynomial resolvent
estimates determine the corresponding decay rates.
\subsection{Notation}

Throughout the paper, we set $\mathbb T:=\mathbb R/(2\pi\mathbb Z)$ and fix a parameter \(\beta>0\). For a periodic function
\(v:\mathbb T\to\mathbb C\), we adopt the Fourier convention
\[
\widehat v(k)
=
\frac{1}{2\pi}
\int_{\mathbb T}
v(x)e^{-ikx}\,dx,
\qquad
v(x)
=
\sum_{k\in\mathbb Z}
\widehat v(k)e^{ikx}.
\]
For \(s\in\mathbb R\), we use in $H^s(\mathbb{T})$ the Sobolev norm given by
\[
\|v\|_{H^s(\mathbb T)}^2
=
2\pi
\sum_{k\in\mathbb Z}
\langle k\rangle^{2s}
|\widehat v(k)|^2,
\qquad
\langle k\rangle=(1+k^2)^{1/2}.
\]

Now, let us introduce the nonlocal operators appearing in our work. The operator \(T_\beta\) is the periodic Fourier multiplier defined by
$$
\widehat{T_\beta v}(k)
=
-im_\beta(k)\widehat v(k),
\qquad
m_\beta(k)
=
\frac{\tanh(\beta k)}{\beta}.
$$
In particular, $\widehat{T_\beta v}(0)=0$. Since \(m_\beta\) is real, the symbol \(-im_\beta(k)\) is purely
imaginary. Hence \(T_\beta\) is skew-adjoint on \(L^2(\mathbb T)\);
in particular, for sufficiently regular functions,
\[
\langle T_\beta v,w\rangle_{L^2}
=
-\langle v,T_\beta w\rangle_{L^2}.
\]
Moreover, $m_\beta\in\ell^\infty(\mathbb Z)$,  and therefore \(T_\beta\) extends to a bounded operator on
\(H^s(\mathbb T)\) for every \(s\in\mathbb R\).

We also introduce the Fourier multiplier \(P_\beta\), defined by
$$
\widehat{P_\beta v}(k)
=
p_\beta(k)\widehat v(k),
$$
where
$$
p_\beta(k)
=
\begin{cases}
\displaystyle
\frac{\tanh(\beta k)}{\beta k},
&
k\neq0,
\\[3mm]
1,
&
k=0.
\end{cases}
$$
The value at \(k=0\) is consistent with
\[
\lim_{\xi\to0}
\frac{\tanh\xi}{\xi}
=
1.
\]
The symbol \(p_\beta\) is real, even, and strictly positive. Therefore,
\(P_\beta\) is positive and self-adjoint. More precisely,
$$
\langle P_\beta v,w\rangle_{L^2(\mathbb T)}
=
\langle v,P_\beta w\rangle_{L^2(\mathbb T)}\qquad \text{and}\qquad
\langle P_\beta v,v\rangle_{L^2(\mathbb T)}
=
2\pi\sum_{k\in\mathbb Z}
p_\beta(k)|\widehat v(k)|^2.
$$
Since
\[
p_\beta(k)
\sim
\frac{1}{\beta|k|},
\qquad
\text{as }|k|\to\infty,
\]
there exist constants \(c_\beta,C_\beta>0\) such that
$$
c_\beta\langle k\rangle^{-1}
\le
p_\beta(k)
\le
C_\beta\langle k\rangle^{-1},
\qquad
k\in\mathbb Z,
$$
where $\langle k\rangle=(1+k^2)^{1/2}$. Consequently,
\begin{equation}\label{eq:Pbeta-Hminus}
\langle P_\beta v,v\rangle_{H^{1/2},H^{-1/2}}
\asymp_\beta
\|v\|_{H^{-1/2}(\mathbb T)}^2,
\qquad
v\in H^{-1/2}(\mathbb T).
\end{equation}
Thus, \(P_\beta\) is a Fourier multiplier of order \(-1\) and defines
an isomorphism
\[
P_\beta:
H^s(\mathbb T)
\longrightarrow
H^{s+1}(\mathbb T),
\]
for every \(s\in\mathbb R\). In particular, \(P_\beta^{1/2}\) is the positive self-adjoint Fourier
multiplier with symbol \(p_\beta(k)^{1/2}\), and
\[
P_\beta^{1/2}:
H^{-1/2}(\mathbb T)
\longrightarrow
L^2(\mathbb T)
\]
is an isomorphism. Moreover, $\|P_\beta^{1/2}v\|_{L^2(\mathbb T)}^2=
\langle P_\beta v,v\rangle_{H^{1/2},H^{-1/2}}$.  
The fundamental identity relates the two nonlocal operators
\begin{equation}\label{xp-beta}
\partial_xP_\beta=-T_\beta.
\end{equation}
Equivalently, $kp_\beta(k)=m_\beta(k)$, for $k\in\mathbb Z$. Furthermore, we know that \(P_\beta\) and \(\partial_x\) are Fourier multipliers, so they commute: $P_\beta\partial_x=\partial_xP_\beta$.

\subsection{Formulation of the problem and main result}
The present work is concerned exclusively with the linearized
Whitham--Boussinesq dynamics. Neglecting the nonlinear terms in
\eqref{eq:nonlinear-whitham-boussinesq-introduction}, we obtain
\begin{equation}\label{eq:linear-whitham-boussinesq-introduction}
\begin{cases}
h_t-T_\beta u=0,
\\[1mm]
u_t+h_x=0.
\end{cases}
\end{equation}

An equivalent formulation is useful for relating
\eqref{eq:linear-whitham-boussinesq-introduction} to other
Whitham--Boussinesq models considered in the literature. Indeed, using
the identity \eqref{xp-beta},  system \eqref{eq:linear-whitham-boussinesq-introduction} can be written
as
\[
\begin{cases}
h_t+P_\beta u_x=0,\\
u_t+h_x=0.
\end{cases}
\]
Since
\[
P_\beta
=
\frac{\tanh(\beta|D|)}{\beta|D|},
\]
the parameter identification $\beta=\sqrt{\epsilon}$ yields
\[
P_\beta=\mathcal L_\epsilon^2,
\qquad
\mathcal L_\epsilon
=
\left(
\frac{\tanh(\sqrt{\epsilon}|D|)}
{\sqrt{\epsilon}|D|}
\right)^{1/2}.
\]
Consequently, after identifying \(h=\eta\), the above system coincides
with the linearization around the zero equilibrium of the
one-dimensional full-dispersion Boussinesq system studied, among
others, in
\cite{PilodSelbergSkoglundTakiTesfahun,KleinLinaresPilodSaut}; see also
the references therein. More precisely, consider
\[
\begin{cases}
\eta_t+\mathcal L_\epsilon^2u_x
+\epsilon(\eta u)_x=0,\\
u_t+\eta_x+\epsilon uu_x=0.
\end{cases}
\]
Indeed, neglecting the quadratic terms gives
\[
\begin{cases}
\eta_t+\mathcal L_\epsilon^2u_x=0,\\
u_t+\eta_x=0.
\end{cases}
\]
Thus, although the nonlinear models are formulated differently, their
linearized dynamics agree after the above identification of the
dispersive parameters.

Throughout the paper, we retain formulation
\eqref{eq:linear-whitham-boussinesq-introduction}, which is naturally
adapted to the operators $T_\beta$ and $P_\beta$ and to the energy
framework developed below. Eliminating \(u\) yields the nonlocal wave
equation\footnote{Nonlocal scalar formulations of this type also arise directly in the
analysis of finite-depth surface waves; see, for instance,
Artiles, Kraenkel, and Manna~\cite{ArtilesKraenkelManna2009}.}
\[
h_{tt}+T_\beta h_x=0.
\]
In Fourier variables, each nonzero spatial mode satisfies
\[
\partial_t^2\widehat h(k,t)
+
\omega_k^2\widehat h(k,t)
=
0,
\]
where
\[
\omega_k
=
\sqrt{
\frac{k\tanh(\beta k)}{\beta}
}.
\]
Thus, each Fourier mode evolves as a conservative oscillator whose frequency is determined by the finite-depth dispersion relation.

The natural energy associated with
\eqref{eq:linear-whitham-boussinesq-introduction} is
\begin{equation}\label{eq:linear-energy-introduction}
E(t)
=
\frac12
\|h(t)\|_{L^2(\mathbb T)}^2
+
\frac12
\langle
P_\beta u(t),u(t)
\rangle_{H^{1/2},H^{-1/2}}.
\end{equation}
For sufficiently regular solutions,
\[
\frac{d}{dt}E(t)=0.
\]

This naturally leads to the phase space $\mathcal H=L^2(\mathbb T)\times H^{-1/2}(\mathbb T)$,
endowed with the energy inner product
\[
\left\langle
(h,u),(\varphi,\psi)
\right\rangle_{\mathcal H}
=
\langle h,\varphi\rangle_{L^2}
+
\langle P_\beta u,\psi\rangle_{H^{1/2},H^{-1/2}}.
\]
With this choice,
\[
E(t)
=
\frac12
\|(h(t),u(t))\|_{\mathcal H}^2.
\]
Moreover, by \eqref{eq:Pbeta-Hminus}, the energy norm is equivalent to
the standard product norm on
\(L^2(\mathbb T)\times H^{-1/2}(\mathbb T)\).

The conservative character of
\eqref{eq:linear-whitham-boussinesq-introduction} motivates the introduction of spatially dependent damping. To describe the feedback mechanism,
consider first the controlled system
\begin{equation}\label{eq:controlled-system-torus}
\begin{cases}
h_t-T_\beta u=a(x)f,
    & (x,t)\in\mathbb T\times(0,T),
\\[1mm]
u_t+h_x=b(x)g,
    & (x,t)\in\mathbb T\times(0,T),
\\[1mm]
h(x,0)=h_0(x),\qquad u(x,0)=u_0(x),
    & x\in\mathbb T,
\end{cases}
\end{equation}
where
\begin{equation}\label{a-b-ab}
a,b\in C^\infty(\mathbb T;\mathbb R),
\qquad
a,b\ge0,
\qquad
a\not\equiv0,
\qquad
b\not\equiv0.
\end{equation}
Remember that \(a\) and \(b\) are continuous and nontrivial, so one may choose
nonempty open sets
\[
\omega_1\Subset\{x\in\mathbb T:a(x)>0\},
\qquad
\omega_2\Subset\{x\in\mathbb T:b(x)>0\},
\]
and constants \(a_0,b_0>0\) such that
\[
a(x)\ge a_0
\quad\text{on }\omega_1,
\qquad
b(x)\ge b_0
\quad\text{on }\omega_2.
\]

We choose the dissipative feedback laws $f=-ah$ and $g=-bP_\beta u$. The corresponding closed-loop system associated with \eqref{eq:controlled-system-torus} is therefore
$$
\begin{cases}
h_t-T_\beta u+a^2h=0,    & (x,t)\in\mathbb T\times(0,T),
\\[1mm]
u_t+h_x+b^2P_\beta u=0,    & (x,t)\in\mathbb T\times(0,T),\\[1mm]
h(x,0)=h_0(x),\qquad u(x,0)=u_0(x),
    & x\in\mathbb T.
\end{cases}
$$
Writing
\[
Z=
\begin{pmatrix}
h\\
u
\end{pmatrix},
\qquad
Z_0=
\begin{pmatrix}
h_0\\
u_0
\end{pmatrix},
\]
the closed-loop system takes the abstract form
\begin{equation}\label{aad-z}
\begin{cases}
Z_t=\mathcal A_d Z,\\
Z(0)=Z_0,
\end{cases}
\qquad
\mathcal A_d=\mathcal A-\mathcal B\mathcal B^*.
\end{equation}
With this notation, the energy defined in
\eqref{eq:linear-energy-introduction} satisfies
\[
E(t)
=
\frac12
\|Z(t)\|_{\mathcal H}^2.
\]
Here, $\mathcal B(f,g)=(af,bg)^\top$ and $\mathcal B^*(h,u)=(ah,bP_\beta u)^\top$, denote the control operator and its adjoint, respectively, while $\mathcal D=\mathcal B\mathcal B^*$ is the damping operator; we refer the reader to Section~\ref{sec:well-posedness} for more details. 

For sufficiently regular solutions, the feedback has the
expected dissipative effect:
\[
\frac{d}{dt}E(t)
=
-\|ah(t)\|_{L^2(\mathbb T)}^2
-
\|bP_\beta u(t)\|_{L^2(\mathbb T)}^2
\le0.
\]
The main problem addressed in this paper is therefore to determine
whether this dissipation forces
\[
E(t)\longrightarrow0,
\qquad
\text{as }t\to\infty,
\]
and, more importantly, to determine the optimal rate at which this
convergence occurs.

The main purpose of this work is to determine the long-time behavior of
the damped linearized Whitham--Boussinesq system and, in particular, to
identify the optimal decay rate of its energy. Our main result shows
that the decay is polynomial and that, under a genuine localization
assumption on the damping, the corresponding polynomial rate is
optimal.

\begin{theorem}[Optimal polynomial stabilization]
\label{thm:main-polynomial-stabilization}
Let \(S_d(t)\) be the contraction semigroup generated by
\(\mathcal A_d\) on $\mathcal H$. Assume that the localization functions $a,b$ satisfy \eqref{a-b-ab}. Then
\[
i\mathbb R\subset\rho(\mathcal A_d),
\]
and there exists a constant \(C>0\) such that
\begin{equation}\label{eq:semigroup-polynomial-decay}
\left\|
S_d(t)\mathcal A_d^{-1}
\right\|_{\mathcal L(\mathcal H)}
\le
\frac{C}{t},
\qquad
t\ge1.
\end{equation}
Consequently, for every $Z_0\in D(\mathcal A_d),$
the corresponding solution $Z(t)=S_d(t)Z_0$ of \eqref{aad-z}
satisfies
\begin{equation}\label{eq:state-polynomial-decay}
\|Z(t)\|_{\mathcal H}
\le
\frac{C}{t}
\|\mathcal A_d Z_0\|_{\mathcal H},
\qquad
t\ge1.
\end{equation}
In particular,
\begin{equation}\label{eq:state-polynomial-domain}
\|Z(t)\|_{\mathcal H}
\le
\frac{C}{t}
\|Z_0\|_{D(\mathcal A_d)},
\qquad
t\ge1.
\end{equation}
Consequently, the energy satisfies
\begin{equation}\label{eq:energy-polynomial-decay}
E(t)
\le
\frac{C}{t^2}
\|Z_0\|_{D(\mathcal A_d)}^2,
\qquad
t\ge1.
\end{equation}
If, in addition, the damping acting on the first component is genuinely
localized, in the sense that there exists a nonempty open set
\[
\mathcal O\Subset
\mathbb T\setminus\operatorname{supp}(a),
\]
then the rate in \eqref{eq:semigroup-polynomial-decay} is optimal in
the polynomial scale. Consequently, the corresponding \(t^{-2}\)
energy decay rate is also optimal in the polynomial scale.
\end{theorem}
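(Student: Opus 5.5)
The plan is to reduce everything to the Borichev--Tomilov characterization: for a bounded contraction semigroup with $i\mathbb R\subset\rho(\mathcal A_d)$, the bound $\|S_d(t)\mathcal A_d^{-1}\|\le C/t$ holds if and only if $\|(i\lambda-\mathcal A_d)^{-1}\|_{\mathcal L(\mathcal H)}=O(|\lambda|)$ as $|\lambda|\to\infty$. The estimates \eqref{eq:state-polynomial-decay}--\eqref{eq:energy-polynomial-decay} then follow by writing $Z_0=\mathcal A_d^{-1}(\mathcal A_dZ_0)$ and using $E=\frac12\|Z\|_{\mathcal H}^2$. Optimality follows from the converse direction: if $\|S_d(t)\mathcal A_d^{-1}\|=o(t^{-1})$, or $O(t^{-1-\varepsilon})$, then the resolvent would be $o(|\lambda|)$. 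So it suffices to exhibit $\lambda_n\to\infty$ with $\|(i\lambda_n-\mathcal A_d)^{-1}\|\ge c|\lambda_n|$.

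\emph{Step 1: $i\mathbb R\subset\rho(\mathcal A_d)$.} The damping $\mathcal D$ is bounded. Because $P_\beta$ is smoothing and $T_\beta$ is bounded, $\mathcal A$ has compact resolvent on $\mathcal H$ away from the zero modes, so $\sigma(\mathcal A_d)\cap i\mathbb R$ consists of eigenvalues. Suppose $\mathcal A_dZ=i\lambda Z$. Taking the real part of the inner product gives $ah=0$ and $bP_\beta u=0$, hence $h=0$ on $\omega_1$ and $P_\beta u=0$ on $\omega_2$, and $Z$ solves the conservative eigen-system. For $\lambda\neq0$ this gives $-\lambda^2h+T_\beta h_x=0$, so $h$ is a finite sum of modes $e^{ikx}$ with $\omega_k=|\lambda|$, i.e.\ $k=\pm k_0$. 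A trigonometric polynomial vanishing on an open set is zero, so $h=0$, and then $u=0$ from $i\lambda u=-h_x$. For $\lambda=0$, the conditions $h_x=0$ and $T_\beta u=a^2h$ together with the zero-mode analysis, using $b\not\equiv0$ for the mean of $u$ and $a\not\equiv0$ for the mean of $h$, give $Z=0$.

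\emph{Step 2: upper bound $O(|\lambda|)$.} Solve $(i\lambda-\mathcal A_d)Z=F$ and argue by contradiction with a sequence $\|Z_n\|=1$, $\lambda_n\|F_n\|\to0$. The dissipation identity gives $\|ah\|^2+\|bP_\beta u\|^2\le\|F\|\|Z\|=o(\lambda^{-1})$. Eliminating $u$, using $u\approx -h_x/(i\lambda)$ modulo bounded terms, yields a scalar equation of the form $(c_\beta|D|+R)h-\lambda^2h-i\lambda a^2h=\text{(error)}$, where $R$ is a smoothing multiplier coming from $\tanh(\beta|k|)-1$. After rescaling $\tau^2=\lambda^2$, this is precisely the setting of the Alazard--Marzuola--Wang stationary estimate for $|D|-\tau^2-i\tau\chi$ on $\mathbb T$. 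I would invoke that estimate, treat $R$ and the $b$-terms as perturbations, and then recover $u$ in $H^{-1/2}$ from $h$.

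\emph{Step 3: lower bound.} This is the step I expect to be the main obstacle. Take $h_n=\varphi(x)e^{ik_nx}$ with $\varphi\in C_c^\infty(\mathcal O)$ and $k_n\to\infty$, set $\lambda_n=\omega_{k_n}\sim\beta^{-1/2}k_n^{1/2}$, and choose $u_n$ from $u=-h_x/(i\lambda)$. Then $ah_n=0$ exactly. The commutator $[T_\beta\partial_x,\varphi]$ applied to $e^{ik_nx}$ is $O(1)$, since the symbol $k\tanh(\beta k)/\beta$ has bounded derivative, and a two-term expansion gives that the residual is $O(\lambda_n^{-1})$ relative to $\|Z_n\|_{\mathcal H}$. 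The damping term $bP_\beta u_n$ is of size $\sim k_n^{-1}\lambda_n^{-1}k_n\sim\lambda_n^{-1}$, which is also acceptable. The delicate part is verifying that the $u$-component residual measured in $H^{-1/2}$, together with the component carrying the mean, is $O(\lambda_n^{-1})$ and not larger. If the naive ansatz only gives $O(1)$, I would first try to correct it with a second-order term $\varphi'$ in the ansatz, or use the nonlocal tails of $P_\beta$, which are smoothing.
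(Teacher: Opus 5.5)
Your outline follows the paper's route: Borichev--Tomilov in both directions, $i\mathbb R\subset\rho(\mathcal A_d)$ from the compact resolvent plus unique continuation for trigonometric polynomials, the $O(|\tau|)$ bound through $L_\beta-\tau^2+i\tau a^2$ and the Alazard--Marzuola--Wang estimate, and the lower bound through quasimodes $\varphi e^{ik_nx}$ with $\varphi\in C_c^\infty(\mathcal O)$.

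The ``main obstacle'' you flag in Step 3 is not one. With $u_n=-\partial_xh_n/(i\lambda_n)$ one has exactly
\[
(i\lambda_n I-\mathcal A_d)Z_n=\Bigl(\tfrac{i}{\lambda_n}\bigl(\lambda_n^2-L_\beta\bigr)h_n,\; b^2P_\beta u_n\Bigr)^\top .
\]
So the second component has no conservative defect, and $\widehat u_n(0)=0$. The symbol $|k|\tanh(\beta|k|)/\beta$ is globally Lipschitz, so the first entry is $O(\lambda_n^{-1}\|\varphi\|_{H^1})$. Since $\widehat{P_\beta u_n}(k)=-m_\beta(k)\widehat h_n(k)/\lambda_n$, the second entry is $O(\lambda_n^{-1})$ even in $L^2$. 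Because $\|Z_n\|_{\mathcal H}\ge\|\varphi\|_{L^2}$, no correction term is needed. The paper divides by $\omega_{|k|}$ mode by mode rather than by $\omega_{k_n}$, which spreads the defect $(\omega_N-\omega_{|k|})\widehat h_N(k)$ over both components; the outcome is the same.

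Step 2 is where your sketch does not yet close. Your contradiction setup does buy something: the dissipation identity gives $\|bP_\beta u_n\|_{L^2}^2\le\|F_n\|_{\mathcal H}\|Z_n\|_{\mathcal H}=o(\lambda_n^{-1})$. Since the $b$-source $T_\beta(b^2P_\beta u)$ is bounded in $L^2$ by $C\|bP_\beta u\|_{L^2}$, this disposes of the $b$-term without the paper's frequency-splitting Lemma~\ref{lem:P-half-v-high-frequency}.

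However, eliminating $u$ also produces the source $T_\beta g$, which lies only in $H^{-1/2}$. The $L^2\to L^2$ AMW bound therefore cannot be applied to it. You need $\|\mathcal P_\tau^{-1}\|_{H^{-1/2}\to L^2}\lesssim|\tau|$, which the paper obtains by duality from $\operatorname{Re}\langle\mathcal P_\tau^*w,w\rangle=\|K_\beta w\|^2-\tau^2\|w\|^2$; see \eqref{eq:scalar-stationary-K-resolvent}.

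Two further points need care:
\begin{itemize}
\item $R_\beta$ is not small in norm, so ``perturbation'' has to mean compactness. In your setting this works: $h_n\to0$ in $H^{-1/2}$ because $i\lambda_nh_n=f_n+T_\beta u_n-a^2h_n$, hence $R_\beta h_n\to0$ in $L^2$.
\item Recovering $u$ from $u\approx-h_x/(i\lambda)$ would require $\|h\|_{H^{1/2}}\lesssim|\lambda|\|h\|_{L^2}$, which the AMW bound does not give. The imaginary-part identity $\lambda\bigl(\|h\|_{L^2}^2-\|P_\beta^{1/2}u\|_{L^2}^2\bigr)=O(\|F\|_{\mathcal H}\|Z\|_{\mathcal H})$ gives $P_\beta^{1/2}u_n\to0$ directly once $h_n\to0$.
\end{itemize}
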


\subsection{Proof strategy and organization of the paper}

We now briefly describe the main ideas behind the proof of Theorem \ref{thm:main-polynomial-stabilization}.
\vspace{0.1cm}

\noindent $\bullet$ \textbf{The first step is to establish the functional and spectral framework.}
The conservative operator
\[
\mathcal A
\binom{h}{u}
=
\binom{T_\beta u}{-h_x},
\qquad
D(\mathcal A)
=
H^{1/2}(\mathbb T)\times L^2(\mathbb T),
\]
is skew-adjoint in the natural energy space, while the damped operator
\(\mathcal A_d\) is maximal dissipative. Hence \(\mathcal A\) generates
a unitary \(C_0\)-group and \(\mathcal A_d\) generates the contraction
semigroup \(S_d(t)\). Moreover, $i\mathbb R\subset\rho(\mathcal A_d)$, so the Arendt--Batty-Lyubich--V\~u theorem \cite{ArendtBatty,LyubichVu}, yields strong stability.

\vspace{0.1cm}

\noindent $\bullet$ \textbf{The second step is to identify the high-frequency spectral structure.}
The conservative frequencies satisfy
\[
\omega_k
=
\sqrt{\frac{k\tanh(\beta k)}{\beta}}
\sim
\frac{1}{\sqrt{\beta}}|k|^{1/2},
\qquad
|k|\to\infty.
\]
In particular, the corresponding family of frequencies does not satisfy
the uniform gap condition underlying classical Ingham-type approaches
used for periodic dispersive equations, as in
Russell and Zhang~\cite{RussellZhang93} and, for coupled systems,
Capistrano--Filho, Komornik and Pazoto~\cite{CapKoPaz20}.
Thus, the classical nonharmonic Fourier approach cannot be applied
directly in the present setting, and the quantitative stabilization
problem is instead treated through high-frequency resolvent estimates.

\vspace{0.1cm}

\noindent $\bullet$ \textbf{The third step is the resolvent analysis.}
We prove the upper bound
\[
\left\|
(i\tau I-\mathcal A_d)^{-1}
\right\|_{\mathcal L(\mathcal H)}
\lesssim
|\tau|,
\qquad
|\tau|\to\infty.
\]
The key reduction is obtained through the change of variable
$v=P_\beta^{1/2}u$, which leads to the scalar operator
$L_\beta=T_\beta\partial_x$ satisfying
\[
\beta L_\beta=|D|+R_\beta,
\]
where \(R_\beta\) is a smoothing Fourier multiplier. This representation
allows us to adapt the high-frequency stationary resolvent estimate of
Alazard, Marzuola, and Wang~\cite{AlazardMarzuolaWang} for the
fractional-wave operator to the present setting, treating \(R_\beta\)
through a compactness argument.

On the other hand, under the localization assumption $\mathcal O\Subset
\mathbb T\setminus\operatorname{supp}(a),$ 
high-frequency quasimodes concentrated in an undamped region give
\[
\left\|
(i\omega_NI-\mathcal A_d)^{-1}
\right\|_{\mathcal L(\mathcal H)}
\gtrsim
\omega_N
\]
along a sequence \(\omega_N\to\infty\). Hence the exponent \(1\) in
the polynomial resolvent upper bound is sharp.

\vspace{0.1cm}

\noindent $\bullet$ \textbf{The fourth step is the application of the
Borichev--Tomilov theorem.}  Finally, the Borichev--Tomilov theorem \cite{BorichevTomilov2010} converts the resolvent upper
bound into
\[
\left\|
S_d(t)\mathcal A_d^{-1}
\right\|_{\mathcal L(\mathcal H)}
=
O(t^{-1}),
\qquad
t\to\infty.
\]
Since
\[
E(t)=\frac12\|Z(t)\|_{\mathcal H}^2,
\]
this yields
\[
E(t)=O(t^{-2}),
\]
for initial data in \(D(\mathcal A_d)\). Under genuinely localized
damping, the quasimode lower bound shows that no faster polynomial
decay rate can hold.

\vspace{0.2cm}
The remainder of the paper is organized as follows. In
Section~\ref{sec:well-posedness}, we establish the functional framework
and the well-posedness of the conservative and damped linearized
Whitham--Boussinesq systems. Section~\ref{sec:spectral-analysis} is
devoted to the spectral analysis of the conservative generator, with
particular emphasis on the high-frequency behavior of the dispersion
relation. In Section~\ref{sec:spectral-damped}, we study the damped
generator and prove strong stability of the associated semigroup. Section~\ref{sec:high-frequency-resolvent} contains the high-frequency
resolvent analysis, including the construction of quasimodes and the
optimal linear resolvent bound. In
Section~\ref{sec:polynomial-stabilization}, these estimates are combined
with the Borichev--Tomilov theorem to obtain the optimal polynomial
stabilization result, that is, Theorem \ref{thm:main-polynomial-stabilization}. Finally,
Section~\ref{sec:final-considerations} contains concluding remarks and
possible directions for further investigation.

\section{Well-posedness}
\label{sec:well-posedness}

In this section, we establish the well-posedness of the conservative and
damped systems. We first analyze the conservative dynamics and prove
that the corresponding operator is skew-adjoint. The damped system is
then treated as a bounded dissipative perturbation of the conservative
operator.

\subsection{The energy space}

Recall that the natural energy space is $\mathcal H
=
L^2(\mathbb T)\times H^{-1/2}(\mathbb T).
$ For $
Z=(h,u)^\top, \ W=(\varphi,\psi)^\top
\in\mathcal H$,
we define
\[
\langle Z,W\rangle_{\mathcal H}
=
\langle h,\varphi\rangle_{L^2(\mathbb T)}
+
\langle P_\beta u,\psi\rangle_{H^{1/2},H^{-1/2}}.
\]
Equivalently, using the Fourier convention adopted throughout the
paper,
\[
\langle Z,W\rangle_{\mathcal H}
=
2\pi
\sum_{k\in\mathbb Z}
\widehat h(k)\overline{\widehat\varphi(k)}
+
2\pi
\sum_{k\in\mathbb Z}
p_\beta(k)\widehat u(k)\overline{\widehat\psi(k)}.
\]
The second series is absolutely convergent by the Cauchy--Schwarz
inequality and the equivalence
\[
p_\beta(k)\asymp_\beta\langle k\rangle^{-1}.
\]
Observe that this defines an inner
product on \(\mathcal H\) once we have that  \(P_\beta\) is positive and self-adjoint. Moreover, by
\eqref{eq:Pbeta-Hminus}, the induced norm satisfies
\[
\|Z\|_{\mathcal H}^2
\asymp_\beta
\|h\|_{L^2(\mathbb T)}^2
+
\|u\|_{H^{-1/2}(\mathbb T)}^2.
\]

Consider the conservative system
\begin{equation*}\label{conservative-system}
\begin{cases}
h_t-T_\beta u=0,
\\[1mm]
u_t+h_x=0,\\[1mm]
h(x,0)=h_0(x),\qquad u(x,0)=u_0(x).\end{cases}
\end{equation*}
We define the operator
\[
\mathcal A
\binom{h}{u}
=
\binom{T_\beta u}{-h_x},
\]
with domain
\begin{equation}\label{domainA}
D(\mathcal A)
=
H^{1/2}(\mathbb T)
\times
L^2(\mathbb T).
\end{equation}
Since \(H^{1/2}(\mathbb T)\) is dense in \(L^2(\mathbb T)\) and
\(L^2(\mathbb T)\) is dense in \(H^{-1/2}(\mathbb T)\), the domain
\(D(\mathcal A)\) is dense in \(\mathcal H\). Moreover, the symbol of \(T_\beta\) is bounded, thus
\[
T_\beta:
L^2(\mathbb T)
\longrightarrow
L^2(\mathbb T)
\]
is continuous, while
\[
\partial_x:
H^{1/2}(\mathbb T)
\longrightarrow
H^{-1/2}(\mathbb T)
\]
is also continuous. Hence
\[
\mathcal A:
D(\mathcal A)
\subset
\mathcal H
\longrightarrow
\mathcal H
\]
is well defined. With \(Z=(h,u)^\top\), system
\eqref{conservative-system} can therefore be written as
\begin{equation}\label{conservative-system-abs}
\begin{cases}
Z_t=\mathcal A Z,\\
Z(0)=Z_0.
\end{cases}
\end{equation}

We next prove that the conservative operator is skew-symmetric with
respect to the energy inner product.

\begin{proposition}\label{prop:skew-symmetry}
For every \(Z,W\in D(\mathcal A)\),
\[
\langle\mathcal AZ,W\rangle_{\mathcal H}
=
-
\langle Z,\mathcal AW\rangle_{\mathcal H}.
\]
Consequently,
\[
\operatorname{Re}
\langle\mathcal AZ,Z\rangle_{\mathcal H}
=
0,
\qquad
Z\in D(\mathcal A).
\]
\end{proposition}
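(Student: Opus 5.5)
I will verify the identity directly in Fourier variables, where every operator involved is a multiplier and the energy inner product is an explicit weighted $\ell^2$ pairing. Take $Z=(h,u)^\top$ and $W=(\varphi,\psi)^\top$ in $D(\mathcal A)=H^{1/2}(\mathbb T)\times L^2(\mathbb T)$. Then $\mathcal AZ=(T_\beta u,-h_x)^\top$, with $\widehat{T_\beta u}(k)=-im_\beta(k)\widehat u(k)$ and $\widehat{h_x}(k)=ik\widehat h(k)$. Using the Fourier form of $\langle\cdot,\cdot\rangle_{\mathcal H}$, I will write
\[
\langle\mathcal AZ,W\rangle_{\mathcal H}
=2\pi\sum_{k}\bigl(-im_\beta(k)\bigr)\widehat u(k)\overline{\widehat\varphi(k)}
+2\pi\sum_k p_\beta(k)\bigl(-ik\bigr)\widehat h(k)\overline{\widehat\psi(k)} .
\]
The identity $kp_\beta(k)=m_\beta(k)$ from \eqref{xp-beta} turns the second sum into $-2\pi\sum_k i\,m_\beta(k)\widehat h(k)\overline{\widehat\psi(k)}$. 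Both series converge absolutely. Since $m_\beta$ is bounded, the first is controlled by $\|u\|_{L^2}\|\varphi\|_{L^2}$. For the second, $|m_\beta(k)|\le\|m_\beta\|_{\ell^\infty}$ is bounded, and $h,\psi\in L^2$ by the domain assumption, so Cauchy--Schwarz applies. This justifies manipulating the sums termwise.

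Next I compute $\langle Z,\mathcal AW\rangle_{\mathcal H}$ in the same way, with $\mathcal AW=(T_\beta\psi,-\varphi_x)^\top$:
\[
\langle Z,\mathcal AW\rangle_{\mathcal H}
=2\pi\sum_k \widehat h(k)\,\overline{(-im_\beta(k))\widehat\psi(k)}
+2\pi\sum_k p_\beta(k)\widehat u(k)\,\overline{(-ik)\widehat\varphi(k)} .
\]
Because $m_\beta$ and $p_\beta$ are real, conjugating the factors gives $+im_\beta(k)$ and $+ik\,p_\beta(k)=+im_\beta(k)$. So $\langle Z,\mathcal AW\rangle_{\mathcal H}=2\pi\sum_k i m_\beta(k)\bigl(\widehat h\overline{\widehat\psi}+\widehat u\overline{\widehat\varphi}\bigr)$, which is exactly the negative of the expression for $\langle\mathcal AZ,W\rangle_{\mathcal H}$. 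This proves the skew-symmetry.

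The consequence follows by taking $W=Z$. Then $\langle\mathcal AZ,Z\rangle_{\mathcal H}=-\langle Z,\mathcal AZ\rangle_{\mathcal H}=-\overline{\langle\mathcal AZ,Z\rangle_{\mathcal H}}$, so this number is purely imaginary and its real part vanishes.

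The only delicate point is the bookkeeping of the pairings. The term $\langle P_\beta(-h_x),\psi\rangle$ is a priori an $H^{1/2}$--$H^{-1/2}$ duality, since $h_x\in H^{-1/2}$. I will handle this by working with the Fourier series throughout, where $p_\beta(k)\,k$ is bounded, so that no regularity issue remains.
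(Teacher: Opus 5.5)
Your proposal is correct and follows essentially the same route as the paper. Both compute the two pairings in Fourier variables, use $kp_\beta(k)=m_\beta(k)$ and the reality of $m_\beta,p_\beta$, justify absolute convergence by $m_\beta\in\ell^\infty$ and Cauchy--Schwarz on $L^2$ coefficients, and then take $W=Z$; your remark that the $H^{1/2}$--$H^{-1/2}$ duality is harmless because $kp_\beta(k)$ is bounded is exactly the point the paper uses implicitly.
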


\begin{proof}
Let
$
Z=(h,u)^\top,
\
W=(\varphi,\psi)^\top
\in D(\mathcal A).
$ Using Parseval's identity and the definition of the energy inner
product, we obtain
\begin{align*}
\langle\mathcal AZ,W\rangle_{\mathcal H}
&=
\langle T_\beta u,\varphi\rangle_{L^2(\mathbb T)}
+
\langle P_\beta(-h_x),\psi\rangle_{H^{1/2},H^{-1/2}}
\\
&=
-2\pi
\sum_{k\in\mathbb Z}
im_\beta(k)
\widehat u(k)
\overline{\widehat\varphi(k)}
-
2\pi
\sum_{k\in\mathbb Z}
ikp_\beta(k)
\widehat h(k)
\overline{\widehat\psi(k)}.
\end{align*}
As  $kp_\beta(k)=m_\beta(k)$, with  $k\in\mathbb Z$, we obtain
\[
\langle\mathcal AZ,W\rangle_{\mathcal H}
=
-2\pi
\sum_{k\in\mathbb Z}
im_\beta(k)
\widehat u(k)
\overline{\widehat\varphi(k)}
-
2\pi
\sum_{k\in\mathbb Z}
im_\beta(k)
\widehat h(k)
\overline{\widehat\psi(k)}.
\]

Similarly,
\begin{align*}
\langle Z,\mathcal AW\rangle_{\mathcal H}
&=
\langle h,T_\beta\psi\rangle_{L^2(\mathbb T)}
+
\langle P_\beta u,-\varphi_x\rangle_{H^{1/2},H^{-1/2}}
\\
&=
2\pi
\sum_{k\in\mathbb Z}
im_\beta(k)
\widehat h(k)
\overline{\widehat\psi(k)}
+
2\pi
\sum_{k\in\mathbb Z}
ikp_\beta(k)
\widehat u(k)
\overline{\widehat\varphi(k)}
\\
&=
2\pi
\sum_{k\in\mathbb Z}
im_\beta(k)
\widehat h(k)
\overline{\widehat\psi(k)}
+
2\pi
\sum_{k\in\mathbb Z}
im_\beta(k)
\widehat u(k)
\overline{\widehat\varphi(k)}.
\end{align*}
Therefore,
\[
\langle\mathcal AZ,W\rangle_{\mathcal H}
=
-
\langle Z,\mathcal AW\rangle_{\mathcal H}.
\]
Note that all the preceding series are absolutely convergent. Indeed,
\(m_\beta\in\ell^\infty(\mathbb Z)\), while
\[
u,\psi\in L^2(\mathbb T),
\qquad
h,\varphi\in H^{1/2}(\mathbb T)
\hookrightarrow L^2(\mathbb T),
\]
so the conclusion follows from the Cauchy--Schwarz inequality.

Finally, taking \(W=Z\), we obtain
\[
\langle\mathcal AZ,Z\rangle_{\mathcal H}
=
-
\overline{
\langle\mathcal AZ,Z\rangle_{\mathcal H}
},
\]
and hence $\operatorname{Re}
\langle\mathcal AZ,Z\rangle_{\mathcal H}
=
0$.
This completes the proof.
\end{proof}

The skew-symmetry established above does not by itself imply that
\(\mathcal A\) is skew-adjoint. To obtain maximality, we next study the
operators \(\lambda I\pm\mathcal A\) in Fourier variables.

\subsection{The resolvent of the conservative operator}

We now study the resolvent of the conservative operator. Owing to the
translation invariance of the system, the resolvent equation can be
solved explicitly in Fourier variables.

Let $F=(f,g)^\top\in\mathcal H$ and \(\lambda>0\). We consider
\begin{equation}\label{resolvent}
\begin{cases}
(\lambda I-\mathcal A)U=F,
\\
U=(h,u)^\top.
\end{cases}
\end{equation}
Taking Fourier coefficients in \eqref{resolvent}, we obtain
\[
\begin{cases}
\lambda\widehat h(k)
+
im_\beta(k)\widehat u(k)
=
\widehat f(k),
\\[1mm]
ik\widehat h(k)
+
\lambda\widehat u(k)
=
\widehat g(k),
\end{cases}
\]
or, equivalently,
\[
M_\lambda(k)
\binom{\widehat h(k)}
{\widehat u(k)}
=
\binom{\widehat f(k)}
{\widehat g(k)},
\]
where
\begin{equation*}
M_\lambda(k)
=
\begin{pmatrix}
\lambda
&
im_\beta(k)
\\
ik
&
\lambda
\end{pmatrix}.
\end{equation*}
We first record the invertibility of the Fourier matrices.

\begin{lemma}\label{lemma:invertibility}
For every \(\lambda>0\) and every \(k\in\mathbb Z\), the matrix
\(M_\lambda(k)\) is invertible. Moreover,
\[
\det M_\lambda(k)
=
\lambda^2+km_\beta(k).
\]
\end{lemma}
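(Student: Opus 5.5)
The plan is to compute the determinant directly and then check that it cannot vanish. Expanding the $2\times2$ determinant gives
\[
\det M_\lambda(k)=\lambda\cdot\lambda-(im_\beta(k))(ik)=\lambda^2-i^2km_\beta(k)=\lambda^2+km_\beta(k),
\]
which is the stated formula.

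Next I will show that $km_\beta(k)\ge0$ for every $k\in\mathbb Z$. By definition,
\[
km_\beta(k)=\frac{k\tanh(\beta k)}{\beta}.
\]
Since $\beta>0$ and $\tanh$ is odd and strictly increasing, $\tanh(\beta k)$ has the same sign as $k$. Hence $k\tanh(\beta k)\ge0$, with equality only when $k=0$. Equivalently, $km_\beta(k)=k^2p_\beta(k)\ge0$, because $p_\beta>0$; this is consistent with $\omega_k^2=km_\beta(k)$.

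It follows that
\[
\det M_\lambda(k)\ge\lambda^2>0
\]
for every $\lambda>0$ and every $k\in\mathbb Z$, so $M_\lambda(k)$ is invertible. For use in the resolvent construction, I will also record the explicit inverse from the adjugate formula:
\[
M_\lambda(k)^{-1}=\frac{1}{\lambda^2+km_\beta(k)}\begin{pmatrix}\lambda&-im_\beta(k)\\-ik&\lambda\end{pmatrix}.
\]

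There is no real obstacle in this argument. The one point to check carefully is the sign of $km_\beta(k)$. That sign is exactly what distinguishes $\lambda>0$, and indeed any real $\lambda\neq0$, from the imaginary axis, where the determinant vanishes at $\lambda=\pm i\omega_k$.
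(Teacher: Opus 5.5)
Your proposal is correct and follows the paper's proof exactly: compute $\det M_\lambda(k)=\lambda^2+km_\beta(k)$ directly, observe that $km_\beta(k)=k\tanh(\beta k)/\beta\ge0$, and conclude $\det M_\lambda(k)\ge\lambda^2>0$. The adjugate formula for $M_\lambda(k)^{-1}$ that you record is the same one the paper writes immediately after the lemma.
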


\begin{proof}
A direct computation gives
\[
\det M_\lambda(k)
=
\lambda^2-(im_\beta(k))(ik)
=
\lambda^2+km_\beta(k).
\]
Since
\[
km_\beta(k)
=
\frac{k\tanh(\beta k)}{\beta}
\ge0,
\]
we have
\[
\det M_\lambda(k)
\ge
\lambda^2
>
0.
\]
Thus \(M_\lambda(k)\) is invertible for every \(k\in\mathbb Z\).
\end{proof}

The inverse matrix is therefore given by
\[
M_\lambda(k)^{-1}
=
\frac1{\lambda^2+km_\beta(k)}
\begin{pmatrix}
\lambda
&
-im_\beta(k)
\\
-ik
&
\lambda
\end{pmatrix}.
\]
Hence
\begin{equation}\label{hkformula}
\widehat h(k)
=
\frac{
\lambda\widehat f(k)
-
im_\beta(k)\widehat g(k)}
{\lambda^2+km_\beta(k)},
\end{equation}
and
\begin{equation}\label{ukformula}
\widehat u(k)
=
\frac{
-ik\widehat f(k)
+
\lambda\widehat g(k)}
{\lambda^2+km_\beta(k)}.
\end{equation}

We can now establish the surjectivity of
\(\lambda I-\mathcal A\).

\begin{proposition}\label{prop:resolvent}
For every \(\lambda>0\),
\[
\lambda I-\mathcal A:
D(\mathcal A)
\longrightarrow
\mathcal H
\]
is an isomorphism. Moreover,
\[
(\lambda I-\mathcal A)^{-1}
\in
\mathcal L(\mathcal H,D(\mathcal A)),
\]
where \(D(\mathcal A)\) is given by \eqref{domainA}.
\end{proposition}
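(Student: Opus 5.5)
The proof is a direct Fourier-side argument built on the explicit formulas \eqref{hkformula}--\eqref{ukformula}. Injectivity is immediate. If $(\lambda I-\mathcal A)U=0$ with $U\in D(\mathcal A)$, then every Fourier coefficient satisfies $M_\lambda(k)(\widehat h(k),\widehat u(k))^\top=0$. Since $M_\lambda(k)$ is invertible by Lemma \ref{lemma:invertibility}, $U=0$. One could equally use Proposition \ref{prop:skew-symmetry}: $\lambda\|U\|_{\mathcal H}^2=\operatorname{Re}\langle(\lambda I-\mathcal A)U,U\rangle_{\mathcal H}$. For surjectivity, take $F=(f,g)^\top\in\mathcal H$ and define $(h,u)$ coefficientwise by \eqref{hkformula}--\eqref{ukformula}. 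These coefficients satisfy the resolvent system mode by mode, so $(\lambda I-\mathcal A)U=F$ holds in the sense of distributions. It then remains to show that $U\in D(\mathcal A)=H^{1/2}\times L^2$, with a bound in terms of $\|F\|_{\mathcal H}$.

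The key estimates rest on the elementary facts
\[
|m_\beta(k)|\le \min(|k|,\beta^{-1}),\qquad km_\beta(k)\asymp_\beta \frac{k^2}{\langle k\rangle},\qquad \lambda^2+km_\beta(k)\gtrsim_{\lambda,\beta}\langle k\rangle .
\]
We treat the two components separately.

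\textbf{The $h$-component.} We need $\langle k\rangle^{1/2}|\widehat h(k)|\lesssim |\widehat f(k)|+\langle k\rangle^{-1/2}|\widehat g(k)|$. The $f$-term contributes $\lambda\langle k\rangle^{1/2}/(\lambda^2+km_\beta)\lesssim\langle k\rangle^{-1/2}\le1$. The $g$-term contributes $\langle k\rangle^{1/2}|m_\beta|/(\lambda^2+km_\beta)\lesssim\langle k\rangle^{-1/2}$, using that $m_\beta$ is bounded. Squaring and summing gives $\|h\|_{H^{1/2}}\lesssim\|f\|_{L^2}+\|g\|_{H^{-1/2}}$.

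\textbf{The $u$-component.} We need $|\widehat u(k)|\lesssim|\widehat f(k)|+\langle k\rangle^{-1/2}|\widehat g(k)|$. The $f$-term is $|k|/(\lambda^2+km_\beta)\lesssim|k|/\langle k\rangle\le1$. The $g$-term is $\lambda/(\lambda^2+km_\beta)\lesssim\langle k\rangle^{-1}\le\langle k\rangle^{-1/2}$. This gives $\|u\|_{L^2}\lesssim\|F\|_{\mathcal H}$.

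These bounds show that the map $F\mapsto U$ is bounded from $\mathcal H$ into $H^{1/2}\times L^2$, with the graph norm controlled since $\mathcal AU=\lambda U-F$. Hence $(\lambda I-\mathcal A)^{-1}\in\mathcal L(\mathcal H,D(\mathcal A))$. The only step needing care is the uniform lower bound $\lambda^2+km_\beta(k)\gtrsim\langle k\rangle$. It follows because $k\tanh(\beta k)/\beta$ grows like $|k|/\beta$ for $|k|\ge1$, while $\lambda^2>0$ handles $k=0$. Everything else is routine Cauchy--Schwarz and Parseval.
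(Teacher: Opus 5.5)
Your proposal is correct and follows essentially the same route as the paper. Both arguments solve the resolvent system mode by mode via the explicit inverse of $M_\lambda(k)$, and both use $\lambda^2+km_\beta(k)\asymp_{\lambda,\beta}\langle k\rangle$ together with the boundedness of $m_\beta$ to obtain $\langle k\rangle^{1/2}|\widehat h(k)|+|\widehat u(k)|\lesssim|\widehat f(k)|+\langle k\rangle^{-1/2}|\widehat g(k)|$, with injectivity coming from the invertibility of the Fourier matrices (your use of $\mathcal AU=\lambda U-F$ to control the graph norm is just a cleaner phrasing of the paper's last step).
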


\begin{proof}
Let \(F=(f,g)^\top\in\mathcal H\). By
Lemma~\ref{lemma:invertibility}, formulas
\eqref{hkformula}--\eqref{ukformula} uniquely determine the Fourier
coefficients of \(U=(h,u)^\top\). It remains to prove that
\[
U\in
D(\mathcal A)
=
H^{1/2}(\mathbb T)\times L^2(\mathbb T).
\]

Observe that
\[
km_\beta(k)
=
\frac{k\tanh(\beta k)}{\beta}
=
\frac{|k|\tanh(\beta|k|)}{\beta}
\asymp_\beta
|k|.
\]
Consequently,
\begin{equation}\label{rk}
\lambda^2+km_\beta(k)
\asymp_{\lambda,\beta}
1+|k|,
\qquad
k\in\mathbb Z.
\end{equation}

We first estimate \(h\). Using \eqref{hkformula}, the boundedness of
\(m_\beta\), and \eqref{rk}, we obtain
\begin{equation}\label{hk2}
(1+|k|)|\widehat h(k)|^2
\lesssim_{\lambda,\beta}
|\widehat f(k)|^2
+
\frac1{1+|k|}
|\widehat g(k)|^2.
\end{equation}
Thanks to the fact that $f\in L^2(\mathbb T)$ and $g\in H^{-1/2}(\mathbb T)$, summing over \(k\in\mathbb Z\) yields
\[
\sum_{k\in\mathbb Z}
(1+|k|)
|\widehat h(k)|^2
<
\infty.
\]
Hence $h\in H^{1/2}(\mathbb T)$.

Next, to estimate \(u\), using \eqref{ukformula} and \eqref{rk}, we obtain
\begin{equation}\label{uk2}
|\widehat u(k)|^2
\lesssim_{\lambda,\beta}
|\widehat f(k)|^2
+
\frac1{1+|k|}
|\widehat g(k)|^2.
\end{equation}
Therefore,
\[
\sum_{k\in\mathbb Z}
|\widehat u(k)|^2
<
\infty,
\]
and consequently $u\in L^2(\mathbb T)$. 
Thus
\[
U=(h,u)^\top
\in
H^{1/2}(\mathbb T)\times L^2(\mathbb T)
=
D(\mathcal A),
\]
which proves the surjectivity of
\[
\lambda I-\mathcal A:
D(\mathcal A)\longrightarrow\mathcal H.
\]
Injectivity follows from Lemma~\ref{lemma:invertibility}, since the
Fourier matrices \(M_\lambda(k)\) are invertible for every
\(k\in\mathbb Z\). Hence \(\lambda I-\mathcal A\) is an isomorphism.

The preceding estimates \eqref{hk2} and \eqref{uk2} also yield
\[
\|h\|_{H^{1/2}(\mathbb T)}^2
+
\|u\|_{L^2(\mathbb T)}^2
\le
C_{\lambda,\beta}
\left(
\|f\|_{L^2(\mathbb T)}^2
+
\|g\|_{H^{-1/2}(\mathbb T)}^2
\right).
\]
Since \(\mathcal A\) is continuous from
\(H^{1/2}(\mathbb T)\times L^2(\mathbb T)\) into \(\mathcal H\), this
also controls the graph norm on \(D(\mathcal A)\). Therefore,
\[
(\lambda I-\mathcal A)^{-1}
\in
\mathcal L(\mathcal H,D(\mathcal A)).
\]
This completes the proof.
\end{proof}

In the next subsection, we establish the analogous property for
\(\lambda I+\mathcal A\). Together with
Proposition~\ref{prop:skew-symmetry}, this will imply that
\(\mathcal A\) is skew-adjoint and hence, by Stone's theorem, generates
a unitary \(C_0\)-group on \(\mathcal H\).

\subsection{Skew-adjointness and generation of the conservative group}

We have established that \(\lambda I-\mathcal A\) is an isomorphism for
every \(\lambda>0\). We now prove the analogous property for
\(\lambda I+\mathcal A\).

\begin{proposition}\label{prop:range-plus-A}
For every \(\lambda>0\),
\[
\lambda I+\mathcal A:
D(\mathcal A)
\longrightarrow
\mathcal H
\]
is an isomorphism. Moreover,
\[
(\lambda I+\mathcal A)^{-1}
\in
\mathcal L(\mathcal H,D(\mathcal A)).
\]
\end{proposition}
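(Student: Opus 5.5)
The plan is to repeat the Fourier-side argument of Proposition~\ref{prop:resolvent} with the sign of $\mathcal A$ reversed. Alternatively, we can reduce to that proposition by a symmetry. For the direct route, fix $F=(f,g)^\top\in\mathcal H$ and $\lambda>0$, and write $(\lambda I+\mathcal A)U=F$ with $U=(h,u)^\top$. In Fourier variables this becomes
\[
\begin{cases}
\lambda\widehat h(k)-im_\beta(k)\widehat u(k)=\widehat f(k),\\[1mm]
-ik\widehat h(k)+\lambda\widehat u(k)=\widehat g(k),
\end{cases}
\]
that is, $\widetilde M_\lambda(k)(\widehat h(k),\widehat u(k))^\top=(\widehat f(k),\widehat g(k))^\top$. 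Here $\widetilde M_\lambda(k)$ has diagonal $\lambda,\lambda$ and off-diagonal entries $-im_\beta(k)$ and $-ik$.

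The first step is the determinant computation:
\[
\det\widetilde M_\lambda(k)=\lambda^2-(-im_\beta(k))(-ik)=\lambda^2+km_\beta(k)\ge\lambda^2>0,
\]
exactly as in Lemma~\ref{lemma:invertibility}. This gives unique Fourier coefficients:
\[
\widehat h(k)=\frac{\lambda\widehat f(k)+im_\beta(k)\widehat g(k)}{\lambda^2+km_\beta(k)},\qquad
\widehat u(k)=\frac{ik\widehat f(k)+\lambda\widehat g(k)}{\lambda^2+km_\beta(k)}.
\]
These differ from \eqref{hkformula}--\eqref{ukformula} only by signs in the numerators. Consequently the moduli bounds \eqref{hk2} and \eqref{uk2} hold verbatim, using \eqref{rk} and the boundedness of $m_\beta$. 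Summing over $k$ gives $h\in H^{1/2}(\mathbb T)$ and $u\in L^2(\mathbb T)$, together with the estimate
\[
\|h\|_{H^{1/2}}^2+\|u\|_{L^2}^2\le C_{\lambda,\beta}\bigl(\|f\|_{L^2}^2+\|g\|_{H^{-1/2}}^2\bigr).
\]
Injectivity follows from invertibility of each $\widetilde M_\lambda(k)$. Continuity of $\mathcal A$ from $H^{1/2}\times L^2$ into $\mathcal H$ then controls the graph norm and yields $(\lambda I+\mathcal A)^{-1}\in\mathcal L(\mathcal H,D(\mathcal A))$.

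As a cross-check, and possibly a shorter written proof, there is a symmetry reduction. Define the reflection $(Rv)(x)=v(-x)$ and set $J(h,u)=(h,-Ru)$ after reflecting $h$ too, i.e.\ $J(h,u)=(Rh,-Ru)$. Since $m_\beta$ is odd and $p_\beta$ is even, $J$ is unitary on $\mathcal H$, preserves $D(\mathcal A)$, and should satisfy $J\mathcal AJ^{-1}=-\mathcal A$. Then $\lambda I+\mathcal A=J(\lambda I-\mathcal A)J^{-1}$, and Proposition~\ref{prop:resolvent} applies directly. I would only include this if the sign check is clean; otherwise the direct computation suffices.

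There is no real obstacle here. The only point requiring care is verifying that the sign change does not affect the determinant, so that $\lambda^2+km_\beta(k)$ stays bounded below by $\lambda^2$ and comparable to $1+|k|$. This holds because the product of the two off-diagonal entries is unchanged.
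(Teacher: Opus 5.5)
Your direct Fourier computation is correct and is essentially the paper's own proof. It uses the same matrix with determinant $\lambda^2+km_\beta(k)\ge\lambda^2$, the same explicit inverse, and the same observation that only the numerator signs change, so the estimates of Proposition~\ref{prop:resolvent} carry over verbatim.

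The optional symmetry shortcut fails as written. Conjugation by $R$ flips the sign of both odd symbols $-im_\beta(k)$ and $ik$, so your $J(h,u)=(Rh,-Ru)$ gives $J\mathcal AJ^{-1}=+\mathcal A$. The simpler choice $J(h,u)=(h,-u)$ (or $(Rh,Ru)$) is unitary on $\mathcal H$, preserves $D(\mathcal A)$, and does give $J\mathcal AJ^{-1}=-\mathcal A$, hence $\lambda I+\mathcal A=J(\lambda I-\mathcal A)J^{-1}$.
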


\begin{proof}
Let \(F=(f,g)^\top\in\mathcal H\). We consider
\begin{equation}\label{eq:resolvent-plus}
(\lambda I+\mathcal A)U=F,
\qquad
U=(h,u)^\top.
\end{equation}
In Fourier variables, \eqref{eq:resolvent-plus} becomes
\[
\begin{cases}
\lambda\widehat h(k)
-
im_\beta(k)\widehat u(k)
=
\widehat f(k),
\\[1mm]
-ik\widehat h(k)
+
\lambda\widehat u(k)
=
\widehat g(k).
\end{cases}
\]
Equivalently,
\[
N_\lambda(k)
\binom{\widehat h(k)}
{\widehat u(k)}
=
\binom{\widehat f(k)}
{\widehat g(k)},
\]
where
\[
N_\lambda(k)
=
\begin{pmatrix}
\lambda & -im_\beta(k)
\\
-ik & \lambda
\end{pmatrix}.
\]
Note that
\[
\det N_\lambda(k)
=
\lambda^2+km_\beta(k)
\ge
\lambda^2>0,
\]
so the matrix \(N_\lambda(k)\) is invertible for every \(k\in\mathbb Z\),
with
\[
N_\lambda(k)^{-1}
=
\frac1{\lambda^2+km_\beta(k)}
\begin{pmatrix}
\lambda & im_\beta(k)
\\
ik & \lambda
\end{pmatrix}.
\]
Consequently,
\[
\widehat h(k)
=
\frac{
\lambda\widehat f(k)
+
im_\beta(k)\widehat g(k)}
{\lambda^2+km_\beta(k)},
\]
and
\[
\widehat u(k)
=
\frac{
ik\widehat f(k)
+
\lambda\widehat g(k)}
{\lambda^2+km_\beta(k)}.
\]

These formulas differ from those obtained for
\((\lambda I-\mathcal A)^{-1}\) only in the signs of the off-diagonal
terms. Therefore, the estimates established in
Proposition~\ref{prop:resolvent} apply without modification. In
particular,
\[
\|h\|_{H^{1/2}(\mathbb T)}^2
+
\|u\|_{L^2(\mathbb T)}^2
\le
C_{\lambda,\beta}
\left(
\|f\|_{L^2(\mathbb T)}^2
+
\|g\|_{H^{-1/2}(\mathbb T)}^2
\right).
\]
Hence \(U\in D(\mathcal A)\), which proves the surjectivity of
\(\lambda I+\mathcal A\). Injectivity follows from the invertibility of
\(N_\lambda(k)\) for every \(k\in\mathbb Z\). Thus
\(\lambda I+\mathcal A\) is an isomorphism, and the preceding estimate
gives
\[
(\lambda I+\mathcal A)^{-1}
\in
\mathcal L(\mathcal H,D(\mathcal A)),
\]
giving the proof.
\end{proof}

We can now identify the conservative generator.

\begin{theorem}\label{thm:skew-adjoint-A}
The operator $\mathcal A:
D(\mathcal A)\subset\mathcal H
\longrightarrow
\mathcal H$
is skew-adjoint, that is,  $\mathcal A^*=-\mathcal A$.
\end{theorem}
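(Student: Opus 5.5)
We will derive skew-adjointness from the standard characterization of skew-adjoint operators, combining Proposition~\ref{prop:skew-symmetry} with the range conditions of Propositions~\ref{prop:resolvent} and \ref{prop:range-plus-A}. Proposition~\ref{prop:skew-symmetry} gives $\langle \mathcal AZ,W\rangle_{\mathcal H}=-\langle Z,\mathcal AW\rangle_{\mathcal H}$ for all $Z,W\in D(\mathcal A)$. Since $D(\mathcal A)$ is dense, this shows $-\mathcal A\subset\mathcal A^*$, that is, $D(\mathcal A)\subset D(\mathcal A^*)$ and $\mathcal A^*W=-\mathcal AW$ on $D(\mathcal A)$. It remains to prove the reverse inclusion $D(\mathcal A^*)\subset D(\mathcal A)$.

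For the reverse inclusion we use surjectivity. Fix $\lambda>0$ and let $W\in D(\mathcal A^*)$. Set $G:=\lambda W+\mathcal A^*W\in\mathcal H$. By Proposition~\ref{prop:resolvent}, there is $V\in D(\mathcal A)$ with $(\lambda I-\mathcal A)V=G$. Since $V\in D(\mathcal A)\subset D(\mathcal A^*)$ and $\mathcal A^*V=-\mathcal AV$, we get $(\lambda I+\mathcal A^*)V=G$, and hence $(\lambda I+\mathcal A^*)(W-V)=0$.

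Next we show $\lambda I+\mathcal A^*$ is injective. Since $(\lambda I-\mathcal A)^*=\lambda I-\mathcal A^*$, we have $\ker(\lambda I+\mathcal A^*)=\operatorname{Ran}(\lambda I+\mathcal A)^\perp$. By Proposition~\ref{prop:range-plus-A}, $\operatorname{Ran}(\lambda I+\mathcal A)=\mathcal H$, so this kernel is $\{0\}$. Hence $W=V\in D(\mathcal A)$, which gives $\mathcal A^*=-\mathcal A$.

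The only delicate point is this adjoint bookkeeping: matching the range of $\lambda I+\mathcal A$ (not $\lambda I-\mathcal A$) with the kernel of $\lambda I+\mathcal A^*$, and using the densely-defined hypothesis so that $\mathcal A^*$ is well defined. Everything else follows directly from the earlier propositions.

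Alternatively, one can argue entirely in Fourier variables. From Proposition~\ref{prop:skew-symmetry}, $\mathcal A$ is closed and $\pm\mathcal A$ are dissipative with $\lambda\mp\mathcal A$ surjective. By Lumer--Phillips, both $\pm\mathcal A$ generate contraction semigroups, so $\mathcal A$ generates a unitary group, and Stone's theorem then yields skew-adjointness. We will present the direct adjoint argument above as the main proof.
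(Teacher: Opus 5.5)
Your proof is correct and follows the paper's route. The paper simply cites the standard maximality criterion (densely defined, skew-symmetric, and $\operatorname{Ran}(\lambda I-\mathcal A)=\operatorname{Ran}(\lambda I+\mathcal A)=\mathcal H$), and your argument is the proof of that criterion written out, with each range condition used exactly where it is needed. One cosmetic point: in the injectivity step the identity you actually need is $(\lambda I+\mathcal A)^*=\lambda I+\mathcal A^*$, not $(\lambda I-\mathcal A)^*=\lambda I-\mathcal A^*$, although your conclusion $\ker(\lambda I+\mathcal A^*)=\operatorname{Ran}(\lambda I+\mathcal A)^\perp$ is correct.
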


\begin{proof}
By Proposition~\ref{prop:skew-symmetry}, \(\mathcal A\) is densely
defined and skew-symmetric. Moreover,
Propositions~\ref{prop:resolvent} and~\ref{prop:range-plus-A} yield
$\operatorname{Ran}(\lambda I-\mathcal A)
=
\operatorname{Ran}(\lambda I+\mathcal A)
=
\mathcal H,$
for every \(\lambda>0\). The standard maximality criterion for
skew-symmetric operators therefore implies that $\mathcal A^*=-\mathcal A$.
\end{proof}

As a consequence of Stone's theorem, we obtain the conservative
well-posedness result.

\begin{theorem}\label{thm:well-posedness-conservative}
The operator \(\mathcal A\) generates a unitary \(C_0\)-group $\{S(t)\}_{t\in\mathbb R}=
\{e^{t\mathcal A}\}_{t\in\mathbb R}$ on \(\mathcal H\). Consequently, for every
\(Z_0=(h_0,u_0)^\top\in\mathcal H\), system
\eqref{conservative-system-abs} admits a unique mild solution $Z(t)=S(t)Z_0 \in C(\mathbb R;\mathcal H)$, and
\[
\|Z(t)\|_{\mathcal H}
=
\|Z_0\|_{\mathcal H},
\qquad
t\in\mathbb R.
\]
If, in addition, \(Z_0\in D(\mathcal A)\), then
\[
Z
\in
C(\mathbb R;D(\mathcal A))
\cap
C^1(\mathbb R;\mathcal H),
\]
and \(Z\) is the unique classical solution of \eqref{conservative-system-abs}.
\end{theorem}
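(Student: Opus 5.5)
The plan is to deduce the statement directly from Theorem~\ref{thm:skew-adjoint-A} via Stone's theorem, and then add the standard semigroup regularity facts.

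First, Theorem~\ref{thm:skew-adjoint-A} gives $\mathcal A^*=-\mathcal A$, with $\mathcal A$ densely defined on the Hilbert space $(\mathcal H,\langle\cdot,\cdot\rangle_{\mathcal H})$. The operator $i\mathcal A$ is therefore self-adjoint. By Stone's theorem, $\mathcal A$ generates a strongly continuous unitary group $S(t)=e^{t\mathcal A}$, $t\in\mathbb R$.

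If one prefers to avoid quoting Stone's theorem, the same conclusion follows from Lumer--Phillips applied separately to $\mathcal A$ and $-\mathcal A$:
\begin{itemize}
\item Both operators are dissipative, since $\operatorname{Re}\langle\pm\mathcal AZ,Z\rangle_{\mathcal H}=0$ by Proposition~\ref{prop:skew-symmetry}.
\item Both satisfy the range conditions, by Propositions~\ref{prop:resolvent} and~\ref{prop:range-plus-A}.
\end{itemize}
Thus $\pm\mathcal A$ generate contraction semigroups, which glue into a group $S(t)$ with $\|S(t)\|\le1$ for all $t\in\mathbb R$. Since $S(-t)=S(t)^{-1}$ is also a contraction, each $S(t)$ is an isometry, and in fact unitary.

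For $Z_0\in\mathcal H$, the mild solution is defined as $Z(t)=S(t)Z_0$. Continuity of $Z$ with values in $\mathcal H$ is just strong continuity of the group, and $\|Z(t)\|_{\mathcal H}=\|Z_0\|_{\mathcal H}$ is unitarity. Uniqueness of the mild solution is the standard fact that the integrated equation $Z(t)=Z_0+\mathcal A\int_0^tZ(s)\,ds$ has only the solution given by the semigroup.

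For $Z_0\in D(\mathcal A)$, the usual generator theory applies:
\begin{itemize}
\item $S(t)$ leaves $D(\mathcal A)$ invariant and commutes with $\mathcal A$.
\item $t\mapsto S(t)Z_0$ is $C^1$ with derivative $\mathcal AS(t)Z_0=S(t)\mathcal AZ_0$.
\end{itemize}
The latter is continuous in $\mathcal H$, which gives continuity in the graph norm, i.e. $Z\in C(\mathbb R;D(\mathcal A))\cap C^1(\mathbb R;\mathcal H)$.

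For uniqueness of classical solutions, let $W$ be another classical solution with the same initial data. I will differentiate $s\mapsto S(t-s)W(s)$ on $[0,t]$ (and symmetrically for negative times). Its derivative is $-S(t-s)\mathcal AW(s)+S(t-s)W'(s)=0$, so $W(t)=S(t)Z_0$. Alternatively, the energy identity $\frac{d}{dt}\|Z-W\|_{\mathcal H}^2=2\operatorname{Re}\langle\mathcal A(Z-W),Z-W\rangle_{\mathcal H}=0$ gives the same conclusion.

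I expect no real obstacle, since all the substantive work (skew-symmetry and the two range conditions) is done in the preceding results. The only point needing care is checking that the graph norm of $D(\mathcal A)$ is equivalent to the $H^{1/2}\times L^2$ norm, so that "$C(\mathbb R;D(\mathcal A))$" is unambiguous. This equivalence follows from the estimate in Proposition~\ref{prop:resolvent} together with the continuity of $\mathcal A:H^{1/2}\times L^2\to\mathcal H$.
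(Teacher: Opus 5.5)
Your proposal is correct and follows the paper's proof: the skew-adjointness from Theorem~\ref{thm:skew-adjoint-A} plus Stone's theorem gives the unitary group. Existence, uniqueness, regularity and norm conservation then come from standard $C_0$-group theory and unitarity, and your Lumer--Phillips alternative, explicit uniqueness arguments and graph-norm equivalence check only spell out details the paper leaves to that standard theory.
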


\begin{proof} By the previous result \(\mathcal A\) is skew-adjoint; thus Stone's theorem
(see, for instance, \cite{Pazy}) implies that \(\mathcal A\) generates
a strongly continuous unitary group on \(\mathcal H\). The existence,
uniqueness, and regularity statements follow from the standard theory
of \(C_0\)-groups, while the conservation of the \(\mathcal H\)-norm
follows from unitarity.
\end{proof}

\subsection{The damped system}\label{sec-damped}
Let $\mathcal U
=
L^2(\mathbb T)\times L^2(\mathbb T)$
be the control space, and define
\[
\mathcal B:
\mathcal U
\longrightarrow
\mathcal H,
\qquad
\mathcal B(f,g)
=
(af,bg)^\top.
\]
Since \(a,b\in C^\infty(\mathbb T)\), the operator
\(\mathcal B\) belongs to
\(\mathcal L(\mathcal U,\mathcal H)\). 
The expression for the adjoint $\mathcal B^*$ follows directly from
the energy inner product on $\mathcal H$. Indeed, for
$(f,g)\in\mathcal U$ and $(h,u)\in\mathcal H$, we have
\[
\begin{aligned}
\left\langle
\mathcal B(f,g),(h,u)
\right\rangle_{\mathcal H}
=
\langle af,h\rangle_{L^2}
+
\langle P_\beta(bg),u\rangle
=
\langle f,ah\rangle_{L^2}
+
\langle g,bP_\beta u\rangle_{L^2},
\end{aligned}
\]
where we have used that $P_\beta$ is self-adjoint and that $a$ and
$b$ are real-valued. Hence
\[
\mathcal B^*(h,u)
=
\left(
ah,
bP_\beta u
\right)^\top.
\]
Consequently, setting $\mathcal D:=\mathcal B\mathcal B^*$, we obtain
\[
\mathcal D(h,u)
=
\left(
a^2h,
b^2P_\beta u
\right)^\top.
\]
Therefore, the closed-loop generator takes the form
\[
\mathcal A_d
=
\mathcal A-\mathcal D
=
\mathcal A-\mathcal B\mathcal B^*.
\]
Summarizing, the closed-loop system can therefore be written as
\begin{equation}\label{dp-s}
\begin{cases}
Z_t=\mathcal A_d Z,
\\[1mm]
Z(0)=Z_0,
\end{cases}
\qquad
\mathcal A_d
:=
\mathcal A-\mathcal D,
\end{equation}
with
$
D(\mathcal A_d)
=
D(\mathcal A).
$
More explicitly,
\[
\mathcal A_d(h,u)
=
\left(
T_\beta u-a^2h,
-h_x-b^2P_\beta u
\right)^\top.
\]

We first record the basic properties of the damping operator.

\begin{proposition}\label{prop:damping}
The operator
$
\mathcal D:
\mathcal H
\longrightarrow
\mathcal H
$
is bounded, self-adjoint, and nonnegative. Moreover,
\[
\langle
\mathcal D Z,Z
\rangle_{\mathcal H}
=
\|\mathcal B^*Z\|_{\mathcal U}^2,
\qquad
Z\in\mathcal H.
\]
\end{proposition}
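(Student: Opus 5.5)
The plan is to write $\mathcal D=\mathcal B\mathcal B^*$ and derive all three properties from the facts, established just above, that $\mathcal B\in\mathcal L(\mathcal U,\mathcal H)$ and that its adjoint with respect to the energy inner product is $\mathcal B^*(h,u)=(ah,bP_\beta u)^\top$.

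\textbf{Boundedness.} I will first check that $\mathcal B^*$ maps $\mathcal H$ boundedly into $\mathcal U=L^2\times L^2$.
\begin{itemize}
\item The first component is immediate: $\|ah\|_{L^2}\le\|a\|_{L^\infty}\|h\|_{L^2}$.
\item For the second component, $P_\beta:H^{-1/2}\to H^{1/2}\hookrightarrow L^2$ is bounded, since $P_\beta$ is an isomorphism $H^s\to H^{s+1}$.
\item Multiplication by the smooth function $b$ is bounded on $L^2$.
\item Hence $\|bP_\beta u\|_{L^2}\lesssim\|u\|_{H^{-1/2}}\lesssim\|(h,u)\|_{\mathcal H}$, using the norm equivalence \eqref{eq:Pbeta-Hminus}.
\end{itemize}
Alternatively, $\mathcal B^*$ is the Hilbert adjoint of a bounded operator and is therefore automatically bounded. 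The formula for $\mathcal B^*$ is valid on all of $\mathcal H$ by density, because both sides are continuous. It follows that $\mathcal D=\mathcal B\mathcal B^*$ is a composition of bounded operators, so $\mathcal D\in\mathcal L(\mathcal H)$.

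\textbf{Self-adjointness and the quadratic form.} For $Z,W\in\mathcal H$, the defining property of the adjoint gives
\[
\langle\mathcal DZ,W\rangle_{\mathcal H}=\langle\mathcal B(\mathcal B^*Z),W\rangle_{\mathcal H}=\langle\mathcal B^*Z,\mathcal B^*W\rangle_{\mathcal U}.
\]
Applying the same identity with $Z$ and $W$ interchanged and taking complex conjugates shows that this equals $\langle Z,\mathcal DW\rangle_{\mathcal H}$. A bounded symmetric operator is self-adjoint. Setting $W=Z$ gives $\langle\mathcal DZ,Z\rangle_{\mathcal H}=\|\mathcal B^*Z\|_{\mathcal U}^2\ge0$, which proves nonnegativity and the stated identity.

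\textbf{Direct check.} As a cross-check, I would also verify the identity straight from the explicit formula $\mathcal D(h,u)=(a^2h,b^2P_\beta u)$:
\[
\langle\mathcal DZ,Z\rangle_{\mathcal H}=\langle a^2h,h\rangle_{L^2}+\langle P_\beta(b^2P_\beta u),u\rangle=\|ah\|_{L^2}^2+\langle b^2P_\beta u,P_\beta u\rangle_{L^2}.
\]
The second equality uses that $P_\beta$ is self-adjoint and that $b^2P_\beta u\in L^2$. The last expression equals $\|ah\|^2+\|bP_\beta u\|^2$, as expected.

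\textbf{Main obstacle.} There is no real obstacle. The only delicate point is bookkeeping with the duality pairings: one must justify moving $P_\beta$ across the $H^{1/2}$–$H^{-1/2}$ pairing when $u$ is only in $H^{-1/2}$. I would handle this by computing in Fourier variables, where $p_\beta$ is real, or by first taking $u\in L^2$ and then extending by density and continuity.
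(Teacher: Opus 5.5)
Your proposal is correct and follows essentially the same route as the paper, which obtains boundedness from $\mathcal B\in\mathcal L(\mathcal U,\mathcal H)$, self-adjointness from $(\mathcal B\mathcal B^*)^*=\mathcal B\mathcal B^*$, and the identity $\langle\mathcal DZ,Z\rangle_{\mathcal H}=\|\mathcal B^*Z\|_{\mathcal U}^2$ directly from the definition of the adjoint. Your component-wise bounds on $\mathcal B^*$ and the direct check through the explicit formula $\mathcal D(h,u)=(a^2h,b^2P_\beta u)$ are extra verification that the paper does not carry out.
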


\begin{proof}
Since
$
\mathcal D=\mathcal B\mathcal B^*
$
and
\(\mathcal B\in\mathcal L(\mathcal U,\mathcal H)\), we have
\(\mathcal D\in\mathcal L(\mathcal H)\). Moreover,
\[
\mathcal D^*
=
(\mathcal B\mathcal B^*)^*
=
(\mathcal B^*)^*\mathcal B^*
=
\mathcal B\mathcal B^*
=
\mathcal D.
\]
Thus, $\mathcal D$ is self-adjoint. Moreover,
\[
\langle \mathcal D Z,Z\rangle_{\mathcal H}
=
\langle \mathcal B\mathcal B^*Z,Z\rangle_{\mathcal H}
=
\|\mathcal B^*Z\|_{\mathcal U}^2
\ge 0,
\]
so that $\mathcal D$ is nonnegative.
\end{proof}

Since \(\mathcal D\) is bounded, the well-posedness of the damped
dynamics follows directly from the conservative theory.

\begin{theorem}
The operator $\mathcal A_d
=
\mathcal A-\mathcal D
$
generates a strongly continuous semigroup
\(\{S_d(t)\}_{t\ge0}\) on \(\mathcal H\). Consequently, for every
\(Z_0\in\mathcal H\), system \eqref{dp-s} admits a unique mild solution
\[
Z(t)
=
S_d(t)Z_0
\in
C([0,\infty);\mathcal H).
\]
If, in addition,
\(Z_0\in D(\mathcal A_d)=D(\mathcal A)\), then
\[
Z
\in
C([0,\infty);D(\mathcal A_d))
\cap
C^1([0,\infty);\mathcal H),
\]
and \(Z\) is the unique classical solution of \eqref{dp-s}.
\end{theorem}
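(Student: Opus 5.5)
The plan is to treat $\mathcal A_d=\mathcal A-\mathcal D$ as a bounded perturbation of the generator $\mathcal A$ and to combine two standard facts from semigroup theory.

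\textbf{Generation.} By Theorem~\ref{thm:well-posedness-conservative}, $\mathcal A$ generates a unitary $C_0$-group on $\mathcal H$. By Proposition~\ref{prop:damping}, $\mathcal D\in\mathcal L(\mathcal H)$. The bounded perturbation theorem (see \cite{Pazy}) then gives that $\mathcal A_d=\mathcal A-\mathcal D$, with $D(\mathcal A_d)=D(\mathcal A)$, generates a $C_0$-semigroup $\{S_d(t)\}_{t\ge0}$. A priori this semigroup only satisfies $\|S_d(t)\|\le e^{\|\mathcal D\|t}$.

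\textbf{Contraction.} To sharpen this to a contraction, we show that $\mathcal A_d$ is maximal dissipative and invoke Lumer--Phillips. For $Z\in D(\mathcal A)$, Proposition~\ref{prop:skew-symmetry} and Proposition~\ref{prop:damping} give
\[
\operatorname{Re}\langle\mathcal A_dZ,Z\rangle_{\mathcal H}=-\|\mathcal B^*Z\|_{\mathcal U}^2\le0 .
\]
For maximality we need $\lambda I-\mathcal A_d$ to be onto for some $\lambda>0$. Since $\lambda$ lies in the resolvent set of any generator once $\lambda$ exceeds its growth bound, this holds for $\lambda>\|\mathcal D\|$. Alternatively, take $\lambda>\|\mathcal D\|$ and write
\[
\lambda I-\mathcal A_d=(\lambda I-\mathcal A)\bigl(I+(\lambda I-\mathcal A)^{-1}\mathcal D\bigr),
\]
and use $\|(\lambda I-\mathcal A)^{-1}\|\le\lambda^{-1}$, which follows from skew-adjointness. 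A Neumann series then inverts the second factor, and Proposition~\ref{prop:resolvent} inverts the first. Lumer--Phillips yields that $S_d$ is a contraction semigroup; this is not needed for the statement but is used later.

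\textbf{Solutions.} The mild solution $Z(t)=S_d(t)Z_0\in C([0,\infty);\mathcal H)$ and its uniqueness follow from the general theory of $C_0$-semigroups. For $Z_0\in D(\mathcal A_d)$, the standard results give $S_d(t)Z_0\in D(\mathcal A_d)$, with
\[
Z\in C([0,\infty);D(\mathcal A_d))\cap C^1([0,\infty);\mathcal H)\quad\text{and}\quad Z_t=\mathcal A_dZ .
\]
Uniqueness of the classical solution follows by differentiating $s\mapsto S_d(t-s)Z(s)$.

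No step is a genuine obstacle. The only point requiring care is the identification $D(\mathcal A_d)=D(\mathcal A)$ with equivalent graph norms, which holds because $\mathcal D$ is bounded.
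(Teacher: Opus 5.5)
Your proposal is correct and follows the paper's proof: $\mathcal A$ generates a unitary group, $\mathcal D\in\mathcal L(\mathcal H)$, so the bounded perturbation theorem (Pazy, Theorem~3.1.1) gives generation with $D(\mathcal A_d)=D(\mathcal A)$, and the mild and classical solution statements follow from standard $C_0$-semigroup theory. Your Lumer--Phillips argument for the contraction property goes beyond the statement but is valid; the paper obtains contraction separately, in the next proposition, through the energy dissipation identity and a density argument.
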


\begin{proof}
By Theorem~\ref{thm:well-posedness-conservative}, \(\mathcal A\)
generates a strongly continuous unitary group on \(\mathcal H\).
Since $\mathcal D\in\mathcal L(\mathcal H)$,
the Bounded Perturbation Theorem
(see, e.g., \cite[Theorem~3.1.1]{Pazy}) implies that
\(\mathcal A_d=\mathcal A-\mathcal D\) generates a strongly continuous
semigroup on \(\mathcal H\). The existence, uniqueness, and regularity
statements then follow from the standard theory of
\(C_0\)-semigroups.
\end{proof}

We conclude this section with the energy dissipation identity.

\begin{proposition}
Let $Z(t)=(h(t),u(t))^\top$ be the classical solution of \eqref{dp-s} corresponding to
\(Z_0\in D(\mathcal A_d)\). Then
\begin{equation}\label{eq:energy-dissipation}
\frac12
\frac{d}{dt}
\|Z(t)\|_{\mathcal H}^2
=
-\|ah(t)\|_{L^2(\mathbb T)}^2
-
\|bP_\beta u(t)\|_{L^2(\mathbb T)}^2.
\end{equation}
Consequently,
\[
\|S_d(t)Z_0\|_{\mathcal H}
\le
\|Z_0\|_{\mathcal H},
\qquad
t\ge0,
\]
for every \(Z_0\in\mathcal H\). In particular,
\(\{S_d(t)\}_{t\ge0}\) is a contraction semigroup.
\end{proposition}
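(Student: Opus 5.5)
The plan is to differentiate the energy along classical solutions, and then pass to general data by density and continuity of the semigroup. Let \(Z_0\in D(\mathcal A_d)=D(\mathcal A)\). By the preceding well-posedness theorem, \(Z\in C^1([0,\infty);\mathcal H)\cap C([0,\infty);D(\mathcal A))\). Hence \(t\mapsto\|Z(t)\|_{\mathcal H}^2\) is differentiable and
\[
\frac12\frac{d}{dt}\|Z(t)\|_{\mathcal H}^2
=\operatorname{Re}\langle Z_t,Z\rangle_{\mathcal H}
=\operatorname{Re}\langle\mathcal AZ,Z\rangle_{\mathcal H}
-\operatorname{Re}\langle\mathcal DZ,Z\rangle_{\mathcal H}.
\]

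By Proposition~\ref{prop:skew-symmetry}, the first term vanishes, since \(Z(t)\in D(\mathcal A)\). By Proposition~\ref{prop:damping}, the second term equals \(\|\mathcal B^*Z\|_{\mathcal U}^2=\|ah\|_{L^2}^2+\|bP_\beta u\|_{L^2}^2\). This uses the explicit form \(\mathcal B^*(h,u)=(ah,bP_\beta u)^\top\) computed in Subsection~\ref{sec-damped}, and that \(bP_\beta u\in L^2\) because \(P_\beta\) maps \(H^{-1/2}\) into \(H^{1/2}\). This gives \eqref{eq:energy-dissipation}.

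Integrating \eqref{eq:energy-dissipation} on \([0,t]\) gives \(\|S_d(t)Z_0\|_{\mathcal H}\le\|Z_0\|_{\mathcal H}\) for \(Z_0\in D(\mathcal A)\). Since \(D(\mathcal A)\) is dense in \(\mathcal H\) and each \(S_d(t)\) is bounded, the inequality extends to all \(Z_0\in\mathcal H\). Hence the semigroup is a contraction semigroup.

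There is no real obstacle here. The only point needing care is justifying the differentiation of the squared norm, which follows from the \(C^1\) regularity of classical solutions. (One could alternatively note that \(\mathcal A_d\) is dissipative and invoke Lumer--Phillips.)
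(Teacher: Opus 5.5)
Your proposal is correct and matches the paper's proof: you differentiate the squared energy norm along classical solutions, use $\operatorname{Re}\langle\mathcal A Z,Z\rangle_{\mathcal H}=0$ and $\langle\mathcal D Z,Z\rangle_{\mathcal H}=\|\mathcal B^*Z\|_{\mathcal U}^2$, integrate in time, and extend to all of $\mathcal H$ by density. The only cosmetic difference is that you cite the skew-symmetry of $\mathcal A$ on $D(\mathcal A)$, whereas the paper cites skew-adjointness; skew-symmetry is all that this step requires.
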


\begin{proof} Let \(Z\) be a classical solution. Then
\[
\frac12
\frac{d}{dt}
\|Z(t)\|_{\mathcal H}^2
=
\operatorname{Re}
\langle
\mathcal A_dZ(t),Z(t)
\rangle_{\mathcal H}.
\]
Using $\mathcal A_d=\mathcal A-\mathcal D$ and the skew-adjointness of \(\mathcal A\), we obtain
\[
\operatorname{Re}
\langle
\mathcal A_dZ(t),Z(t)
\rangle_{\mathcal H}
=
-
\langle
\mathcal DZ(t),Z(t)
\rangle_{\mathcal H}.
\]
By Proposition~\ref{prop:damping} and since $\mathcal B^*Z(t)=\left(ah(t),bP_\beta u(t)\right)^\top$, we have
\[
\frac12
\frac{d}{dt}
\|Z(t)\|_{\mathcal H}^2
=
-
\|\mathcal B^*Z(t)\|_{\mathcal U}^2,
\]
giving \eqref{eq:energy-dissipation}. Now, integrating in time gives
\[
\|S_d(t)Z_0\|_{\mathcal H}
\le
\|Z_0\|_{\mathcal H},
\qquad
t\ge0,
\]
for \(Z_0\in D(\mathcal A_d)\). Finally, the estimate extends to every \(Z_0\in\mathcal H\) by continuity of the semigroup, because \(D(\mathcal A_d)\) is dense in \(\mathcal H\).
\end{proof}


\section{Spectral analysis of the conservative operator}
\label{sec:spectral-analysis}

In this section, we characterize the spectral structure of the
conservative operator \(\mathcal A\). Since the spatial domain is the
periodic torus \(\mathbb T\), the eigenvalue problem can be analyzed
mode by mode through the Fourier basis, reducing it to a family of
two-dimensional matrix problems.

\subsection{Fourier reduction and eigenvalues}

We consider the eigenvalue problem
\begin{equation}\label{eq:eigenvalue-problem}
\mathcal A U=\mu U,
\qquad
U=(h,u)^\top\in D(\mathcal A)\setminus\{0\},
\end{equation}
where \(\mu\in\mathbb C\). Recall that
\[
\mathcal A(h,u)
=
\left(
T_\beta u,-h_x
\right)^\top,
\]
with
\[
\widehat{T_\beta u}(k)
=
-im_\beta(k)\widehat u(k),
\qquad
m_\beta(k)
=
\frac{\tanh(\beta k)}{\beta}.
\]

Expanding
\[
h(x)
=
\sum_{k\in\mathbb Z}\widehat h(k)e^{ikx},
\qquad
u(x)
=
\sum_{k\in\mathbb Z}\widehat u(k)e^{ikx},
\]
we obtain
\[
\widehat{\mathcal AU}(k)
=
\begin{pmatrix}
-im_\beta(k)\widehat u(k)
\\[1mm]
-ik\widehat h(k)
\end{pmatrix}.
\]
Therefore, \eqref{eq:eigenvalue-problem} is equivalent, for every
\(k\in\mathbb Z\), to
\[
A(k)\widehat U(k)
=
\mu\widehat U(k),
\qquad
\widehat U(k)
=
\begin{pmatrix}
\widehat h(k)
\\
\widehat u(k)
\end{pmatrix},
\]
where
\[
A(k)
=
\begin{pmatrix}
0 & -im_\beta(k)
\\
-ik & 0
\end{pmatrix}.
\]

For each \(k\in\mathbb Z\),
\[
\mu I-A(k)
=
\begin{pmatrix}
\mu & im_\beta(k)
\\
ik & \mu
\end{pmatrix},
\]
and hence
\[
\det(\mu I-A(k))
=
\mu^2+km_\beta(k).
\]
We therefore introduce the sequence
\begin{equation}\label{eq:dispersion-sequence}
\omega_k
:=
\sqrt{km_\beta(k)}
=
\sqrt{\frac{k\tanh(\beta k)}{\beta}},
\qquad
k\in\mathbb Z.
\end{equation}
We know that
\[
km_\beta(k)
=
\frac{k\tanh(\beta k)}{\beta}
\ge0,
\]
thus the sequence \((\omega_k)_{k\in\mathbb Z}\) is well defined. The
characteristic equation becomes
\[
\mu^2+\omega_k^2=0,
\]
and therefore the eigenvalues of \(A(k)\) are
\[
\mu_k^\pm
=
\pm i\omega_k.
\]

The relevant properties of the dispersion sequence are summarized
below.

\begin{proposition}\label{prop:eigenvalue-properties}
The sequence \((\omega_k)_{k\in\mathbb Z}\) defined by
\eqref{eq:dispersion-sequence} satisfies:

\begin{enumerate}
\item[(i)] We have that  $\omega_0=0$ and \(A(0)=0\).

\item[(ii)]
For every \(k\neq0\),
\[
\omega_k>0,
\]
and the eigenvalues \(\pm i\omega_k\) of \(A(k)\) are simple.

\item[(iii)]
The sequence is even:
\[
\omega_{-k}
=
\omega_k,
\qquad
k\in\mathbb Z.
\]

\item[(iv)]
The positive frequencies are strictly increasing:
\[
0<\omega_1<\omega_2<\cdots.
\]

\item[(v)]
The high-frequency asymptotic behavior is
\[
\omega_k
\sim
\frac{1}{\sqrt{\beta}}|k|^{1/2},
\qquad
|k|\to\infty.
\]
\end{enumerate}
\end{proposition}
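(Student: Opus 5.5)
The plan is to reduce every item to elementary properties of the real function
\[
\phi(\xi)=\frac{\xi\tanh(\beta\xi)}{\beta},\qquad \xi\in\mathbb R,
\]
since $\omega_k^2=km_\beta(k)=\phi(k)$ by \eqref{eq:dispersion-sequence}. We treat the items in the order (i), (iii), (ii), (iv), (v). Item (i) is immediate: $m_\beta(0)=\tanh(0)/\beta=0$, so $\omega_0=0$, and both off-diagonal entries of $A(0)$ vanish, giving $A(0)=0$. For (iii), note that $\tanh$ is odd, so $\xi\tanh(\beta\xi)$ is even. Hence $\phi(-k)=\phi(k)$, and taking nonnegative square roots gives $\omega_{-k}=\omega_k$.

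For (ii), take $k\neq0$. Since $\tanh(\beta k)$ has the sign of $k$ and does not vanish, $k\tanh(\beta k)>0$, and so $\omega_k>0$. The characteristic polynomial of $A(k)$ was computed above as $\mu^2+\omega_k^2$. Its roots $\pm i\omega_k$ are therefore distinct. A $2\times2$ matrix with two distinct eigenvalues has each of them algebraically, hence also geometrically, simple.

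For (iv), it suffices to show that $\phi$ is strictly increasing on $(0,\infty)$, because the square root is strictly increasing. Differentiating gives
\[
\phi'(\xi)=\frac{\tanh(\beta\xi)}{\beta}+\xi\,\mathrm{sech}^2(\beta\xi).
\]
Both terms are positive for $\xi>0$, so $\phi$ is strictly increasing there, and restricting to integers yields $0<\omega_1<\omega_2<\cdots$.

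For (v), evenness from (iii) reduces the claim to $k\to+\infty$. Write
\[
\omega_k=\beta^{-1/2}k^{1/2}\,\tanh(\beta k)^{1/2}.
\]
Since $\tanh(\beta k)\to1$, we get $\omega_k\big/\bigl(\beta^{-1/2}|k|^{1/2}\bigr)\to1$. This is exactly the asymptotic equivalence claimed.

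None of these steps presents a real obstacle. The only point needing care is (iv), where monotonicity must be argued through $\phi'$ rather than read off from the asymptotics in (v).
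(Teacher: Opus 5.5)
Your proposal is correct and follows essentially the same route as the paper. Both reduce each item to elementary properties of $\xi\mapsto \xi\tanh(\beta\xi)/\beta$, obtain strict monotonicity from the positivity of its derivative on $(0,\infty)$, get simplicity from the distinct roots $\pm i\omega_k$ of $\mu^2+\omega_k^2$, and derive the asymptotics from $\tanh(\beta|k|)\to1$.
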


\begin{proof}
Property (i) follows immediately from \(m_\beta(0)=0\).

For \(k\neq0\), the quantities \(k\) and \(\tanh(\beta k)\) have the
same sign. Hence
\[
k\tanh(\beta k)>0,
\]
which proves that \(\omega_k>0\). Note that the characteristic polynomial
\[
\mu^2+\omega_k^2
\]
has the two distinct roots \(\pm i\omega_k\), therefore these eigenvalues are
simple for each fixed \(k\neq0\). This proves (ii).

Since
\[
(-k)\tanh(-\beta k)
=
k\tanh(\beta k),
\]
we obtain
\[
\omega_{-k}=\omega_k,
\]
which proves (iii).

To prove (iv), define
\[
f(x)
=
\frac{x\tanh(\beta x)}{\beta},
\qquad
x>0.
\]
Then
\[
f'(x)
=
\frac{
\tanh(\beta x)
+
\beta x\,\operatorname{sech}^2(\beta x)
}{\beta}
>0,
\qquad
x>0.
\]
Thus \(f\) is strictly increasing on \((0,\infty)\). Since the
square-root function is strictly increasing on \([0,\infty)\), it
follows that
\[
0<\omega_1<\omega_2<\cdots.
\]

Finally, by using that
\[
\tanh(\beta|k|)
\longrightarrow1,
\qquad
\text{as }|k|\to\infty,
\]
we have
\[
\frac{\omega_k}
{\beta^{-1/2}|k|^{1/2}}
=
\sqrt{\tanh(\beta|k|)}
\longrightarrow1,
\]
which proves (v).
\end{proof}

\subsection{Compact resolvent and characterization of the spectrum}

We now show that the conservative operator \(\mathcal A\) has compact
resolvent. This property, together with the Fourier reduction above,
allows us to characterize its spectrum completely.

\begin{proposition}\label{prop:compact-resolvent}
The operator \(\mathcal A\) has compact resolvent.
\end{proposition}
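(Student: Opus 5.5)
It suffices to show that $(\lambda I-\mathcal A)^{-1}$ is compact on $\mathcal H$ for a single $\lambda>0$, say $\lambda=1$. The resolvent identity then gives compactness at every point of $\rho(\mathcal A)$.

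By Proposition~\ref{prop:resolvent}, the inverse $(\lambda I-\mathcal A)^{-1}$ maps $\mathcal H$ boundedly into
\[
D(\mathcal A)=H^{1/2}(\mathbb T)\times L^2(\mathbb T),
\]
with the estimate
\[
\|h\|_{H^{1/2}}^2+\|u\|_{L^2}^2\le C_{\lambda,\beta}\bigl(\|f\|_{L^2}^2+\|g\|_{H^{-1/2}}^2\bigr).
\]
Write $(\lambda I-\mathcal A)^{-1}$ as the composition of this bounded map with the inclusion
\[
\iota: H^{1/2}(\mathbb T)\times L^2(\mathbb T)\hookrightarrow L^2(\mathbb T)\times H^{-1/2}(\mathbb T),
\]
where the target carries a norm equivalent to $\|\cdot\|_{\mathcal H}$ by \eqref{eq:Pbeta-Hminus}. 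Compactness then reduces to compactness of $\iota$. This is the Rellich embedding $H^{s}(\mathbb T)\hookrightarrow H^{t}(\mathbb T)$ for $s>t$ on the compact torus, applied in each component.

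To keep the argument self-contained in the paper's Fourier framework, I would prove the compactness of $\iota$ directly. Let $\Pi_N$ be the Fourier truncation to modes $|k|\le N$. Then $\iota\Pi_N$ has finite rank, and
\[
\|(\iota-\iota\Pi_N)(h,u)\|^2\lesssim \sum_{|k|>N}\bigl(|\widehat h(k)|^2+\langle k\rangle^{-1}|\widehat u(k)|^2\bigr)\le \langle N\rangle^{-1}\bigl(\|h\|_{H^{1/2}}^2+\|u\|_{L^2}^2\bigr).
\]
So $\iota$ is a norm limit of finite-rank operators, and hence compact.

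As an alternative route, one can use the explicit formulas \eqref{hkformula}--\eqref{ukformula} and show directly that the Fourier-block operator $M_\lambda(k)^{-1}$, viewed in the $\mathcal H$-weights, has norm $O(|k|^{-1/2})\to0$ as $|k|\to\infty$. A block-diagonal operator whose block norms tend to zero is compact.

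No step is genuinely hard. The only point needing care is checking that the weights in $\mathcal H$ and in $D(\mathcal A)$ produce the decay factor $\langle N\rangle^{-1/2}$ uniformly. This is already contained in the estimates \eqref{hk2}--\eqref{uk2}.
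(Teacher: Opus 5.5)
Your proposal is correct and follows the paper's proof: Proposition~\ref{prop:resolvent} makes $(\lambda I-\mathcal A)^{-1}$ bounded from $\mathcal H$ into $D(\mathcal A)=H^{1/2}(\mathbb T)\times L^2(\mathbb T)$, and composing with the compact embedding $D(\mathcal A)\hookrightarrow\mathcal H$ gives compactness. The only difference is that you prove the embedding is compact directly by Fourier truncation, with the uniform factor $\langle N\rangle^{-1/2}$, whereas the paper simply cites the compactness of $H^{1/2}(\mathbb T)\hookrightarrow L^2(\mathbb T)$ and $L^2(\mathbb T)\hookrightarrow H^{-1/2}(\mathbb T)$.
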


\begin{proof}
Let \(\lambda>0\). By Proposition~\ref{prop:resolvent},
\[
(\lambda I-\mathcal A)^{-1}
\in
\mathcal L(\mathcal H,D(\mathcal A)).
\]
Moreover, the embeddings $H^{1/2}(\mathbb T)
\hookrightarrow
L^2(\mathbb T)$ and  $L^2(\mathbb T)
\hookrightarrow
H^{-1/2}(\mathbb T)$
are compact. Consequently,
\[
D(\mathcal A)
=
H^{1/2}(\mathbb T)\times L^2(\mathbb T)
\hookrightarrow
L^2(\mathbb T)\times H^{-1/2}(\mathbb T)
=
\mathcal H
\]
is compact. Moreover, again by Proposition \ref{prop:resolvent}, we have
\[
(\lambda I-\mathcal A)^{-1}
:
\mathcal H
\longrightarrow
D(\mathcal A)
\]
is bounded; its composition with the compact embedding
\(D(\mathcal A)\hookrightarrow\mathcal H\) is compact. Therefore,
\[
(\lambda I-\mathcal A)^{-1}
\in
\mathcal K(\mathcal H),
\]
and \(\mathcal A\) has compact resolvent.
\end{proof}

We can now give the complete spectral characterization.

\begin{theorem}
The spectrum of the conservative operator \(\mathcal A\) is purely
discrete and is given by
\[
\sigma(\mathcal A)
=
\{0\}
\cup
\left\{
\pm i\omega_n:
n\in\mathbb N^*
\right\},
\]
where \(\omega_n\) is defined by
\eqref{eq:dispersion-sequence}. Moreover,
\[
\ker(\mathcal A)
=
\operatorname{span}
\left\{
\binom{1}{0},
\binom{0}{1}
\right\},
\]
and, for every \(n\in\mathbb N^*\),
\begin{equation}\label{eq:eigenspaces-conservative}
\ker(\pm i\omega_n I-\mathcal A)
=
\left\{
\begin{pmatrix}
c_+e^{inx}+c_-e^{-inx}
\\[2mm]
\displaystyle
\mp\frac{n}{\omega_n}
\left(
c_+e^{inx}-c_-e^{-inx}
\right)
\end{pmatrix}
:
c_+,c_-\in\mathbb C
\right\}.
\end{equation}
In particular, \(0\) and each eigenvalue
\(\pm i\omega_n\), \(n\in\mathbb N^*\), have geometric multiplicity
two. Since \(\mathcal A\) is normal, their algebraic multiplicities
coincide with their geometric multiplicities. Furthermore, the
spectrum has no finite accumulation point.
\end{theorem}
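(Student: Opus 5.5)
The proof reduces everything to the $2\times2$ Fourier matrices $A(k)$ of Section~\ref{sec:spectral-analysis}, combined with Proposition~\ref{prop:compact-resolvent}. First, since $\mathcal A$ has compact resolvent, $\sigma(\mathcal A)$ consists only of isolated eigenvalues of finite algebraic multiplicity and has no finite accumulation point. So the whole task is to describe $\sigma_p(\mathcal A)$ and the eigenspaces.

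\textbf{Step 1: reduce the eigenvalue problem to the matrices $A(k)$.} For $U=(h,u)^\top\in D(\mathcal A)$, the equation $\mathcal AU=\mu U$ is equivalent to $A(k)\widehat U(k)=\mu\widehat U(k)$ for every $k\in\mathbb Z$. Hence $\mu$ is an eigenvalue if and only if $\mu\in\sigma(A(k))=\{\pm i\omega_k\}$ for some $k$. Moreover $\widehat U(k)$ must vanish for every $k$ with $\omega_k^2\neq-\mu^2$.

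\textbf{Step 2: identify which modes carry each eigenvalue.}
\begin{itemize}
\item For $\mu=0$: by Proposition~\ref{prop:eigenvalue-properties}(ii) we have $\omega_k>0$ for $k\ne0$, so only the mode $k=0$ survives. Since $A(0)=0$, this mode is unrestricted, and the kernel is spanned by the constant vectors $(1,0)^\top$ and $(0,1)^\top$.
\item For $\mu=\pm i\omega_n$ with $n\ge1$: by evenness and strict monotonicity (Proposition~\ref{prop:eigenvalue-properties}(iii),(iv)), $\omega_k=\omega_n$ holds exactly for $k=\pm n$. By (ii), each of these two modes contributes a one-dimensional eigenspace of $A(\pm n)$.
\end{itemize}

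\textbf{Step 3: compute the eigenvectors explicitly.} Take $\mu=i\omega_n$ and $k=n$. The second row of $A(n)$ gives $-in\widehat h=i\omega_n\widehat u$, so $\widehat u=-(n/\omega_n)\widehat h$. The first row is then automatically satisfied, because $m_\beta(n)\,n=\omega_n^2$. For $k=-n$ the same computation gives $\widehat u=+(n/\omega_n)\widehat h$. Writing $c_\pm=\widehat h(\pm n)$ produces exactly \eqref{eq:eigenspaces-conservative} with the upper sign. The case $-i\omega_n$ is identical with the signs reversed. These functions are trigonometric polynomials, so they lie in $D(\mathcal A)$, and every eigenvalue has geometric multiplicity two.

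\textbf{Step 4: algebraic multiplicities.} By Theorem~\ref{thm:skew-adjoint-A}, $\mathcal A$ is skew-adjoint and hence normal, so $\ker(\mu-\mathcal A)=\ker(\mu-\mathcal A)^*$. Suppose $(\mu-\mathcal A)^2U=0$ and set $V=(\mu-\mathcal A)U$. Then $V\in\ker(\mu-\mathcal A)=\ker(\mu-\mathcal A)^*$, and therefore
\[
\|V\|_{\mathcal H}^2=\langle (\mu-\mathcal A)U,V\rangle_{\mathcal H}=\langle U,(\mu-\mathcal A)^*V\rangle_{\mathcal H}=0.
\]
So there are no Jordan chains. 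Alternatively, one can check directly that each $A(k)$, $k\neq0$, is diagonalizable because it has two distinct eigenvalues.

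\textbf{Expected obstacle.} The only delicate point is ensuring that no other $k$ shares the frequency $\omega_n$, which is what pins the multiplicity at exactly two. This is precisely what the evenness and strict monotonicity in Proposition~\ref{prop:eigenvalue-properties} supply. Everything else is routine linear algebra.
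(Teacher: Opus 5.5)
Your proposal is correct and follows essentially the same route as the paper: compact resolvent (Proposition~\ref{prop:compact-resolvent}) to reduce the spectrum to eigenvalues with no finite accumulation point, reduction to the Fourier matrices $A(k)$, explicit solution on the modes $k=\pm n$ (resp.\ $k=0$), and normality of the skew-adjoint $\mathcal A$ for the algebraic multiplicities. You are in fact slightly more careful than the paper, because you explicitly use the evenness and strict monotonicity of $(\omega_k)$ to show that only the modes $k=\pm n$ (resp.\ $k=0$) can carry a given eigenvalue, and you write out the no-Jordan-chain argument that the paper compresses into the word ``normal''.
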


\begin{proof}
By Proposition~\ref{prop:compact-resolvent}, the operator
\(\mathcal A\) has compact resolvent. Hence its spectrum consists only
of isolated eigenvalues of finite algebraic multiplicity and has no
finite accumulation point.

Suppose that $\mathcal AU=\mu U$, with $U\in D(\mathcal A)\setminus\{0\}$. For every \(k\in\mathbb Z\), the Fourier coefficients satisfy
\[
A(k)\widehat U(k)
=
\mu\widehat U(k).
\]
Since \(U\neq0\), there exists at least one \(k\in\mathbb Z\) such that
\(\widehat U(k)\neq0\). Therefore, \(\mu\) must be an eigenvalue of
\(A(k)\), and hence
\[
\mu=0
\qquad\text{or}\qquad
\mu=\pm i\omega_n
\]
for some \(n\in\mathbb N^*\).

Conversely, for \(k=0\), the equation
\[
A(0)\widehat U(0)=0
\]
shows that
\[
\ker(\mathcal A)
=
\operatorname{span}
\left\{
\binom{1}{0},
\binom{0}{1}
\right\}.
\]

Now fix \(n\in\mathbb N^*\). Since
\(\omega_{-n}=\omega_n\), the eigenvalue \(i\omega_n\) arises from the
two Fourier modes \(n\) and \(-n\). Solving
\[
A(k)\widehat U(k)
=
i\omega_n\widehat U(k)
\]
for \(k=\pm n\) gives
\[
\widehat u(n)
=
-\frac{n}{\omega_n}\widehat h(n),
\qquad
\widehat u(-n)
=
\frac{n}{\omega_n}\widehat h(-n).
\]
Thus
\[
\ker(i\omega_n I-\mathcal A)
=
\left\{
\begin{pmatrix}
c_+e^{inx}+c_-e^{-inx}
\\[2mm]
\displaystyle
-\frac{n}{\omega_n}
\left(
c_+e^{inx}-c_-e^{-inx}
\right)
\end{pmatrix}
:
c_+,c_-\in\mathbb C
\right\}.
\]
The case \(-i\omega_n\) is obtained analogously, yielding
\eqref{eq:eigenspaces-conservative}.

Hence every eigenvalue has geometric multiplicity two. By Theorem~\ref{thm:skew-adjoint-A}, the operator \(\mathcal A\) is skew-adjoint and therefore normal. Hence its algebraic and geometric multiplicities coincide.

Finally, Proposition~\ref{prop:eigenvalue-properties} gives
\[
\omega_n
\sim
\frac1{\sqrt{\beta}}n^{1/2}
\longrightarrow\infty,
\]
so the spectrum has no finite accumulation point.
\end{proof}

\section{Spectral properties of the damped generator}
\label{sec:spectral-damped}

We now turn to the spectral analysis of the damped generator. Recall
from Subsection~\ref{sec-damped} that
\[
\mathcal A_d
=
\mathcal A-\mathcal D,
\qquad
D(\mathcal A_d)=D(\mathcal A),
\]
where
\[
\mathcal A
\binom{h}{u}
=
\binom{T_\beta u}{-h_x},
\qquad
D(\mathcal A)
=
H^{1/2}(\mathbb T)\times L^2(\mathbb T),
\]
and
\[
\mathcal D
=
\mathcal B\mathcal B^*.
\]
Thus,
\[
\mathcal A_d
\binom{h}{u}
=
\binom{
T_\beta u-a^2h
}{
-h_x-b^2P_\beta u
}.
\]
By Section~\ref{sec:well-posedness}, the operator \(\mathcal A_d\)
generates a contraction semigroup on \(\mathcal H\) and is therefore
maximal dissipative. In particular,
\[
\{z\in\mathbb C:\operatorname{Re}z>0\}
\subset
\rho(\mathcal A_d).
\]

The purpose of this section is to establish the compactness of the
resolvent of \(\mathcal A_d\), analyze the spectrum on the imaginary
axis, and deduce strong stability of the damped semigroup. These
properties will also provide the spectral framework for the
high-frequency resolvent analysis developed later.

\subsection{Compactness of the resolvent}

We now show that the compact-resolvent property of the conservative
generator is preserved under the bounded damping perturbation.

\begin{proposition}\label{prop:compact-resolvent-damped}
The damped generator \(\mathcal A_d\) has compact resolvent.
\end{proposition}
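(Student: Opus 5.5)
To prove Proposition~\ref{prop:compact-resolvent-damped}, the plan is to combine the maximal dissipativity of \(\mathcal A_d\) with the compactness already established for \(\mathcal A\) in Proposition~\ref{prop:compact-resolvent}, via a resolvent identity. Since \(\mathcal A_d\) generates a contraction semigroup, every \(\lambda>0\) lies in \(\rho(\mathcal A_d)\). By Proposition~\ref{prop:resolvent}, every such \(\lambda\) also lies in \(\rho(\mathcal A)\). So we may fix one \(\lambda>0\) and work with both resolvents at that point.

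The first step is to factor \(\lambda I-\mathcal A_d\) through \(\lambda I-\mathcal A\). Since \(D(\mathcal A_d)=D(\mathcal A)\) and \(\mathcal A_d=\mathcal A-\mathcal D\), we have on \(D(\mathcal A)\)
\[
\lambda I-\mathcal A_d=(\lambda I-\mathcal A)+\mathcal D
=\bigl(I+\mathcal D(\lambda I-\mathcal A)^{-1}\bigr)(\lambda I-\mathcal A).
\]
Both \(\lambda I-\mathcal A_d\) and \(\lambda I-\mathcal A\) are bijections from \(D(\mathcal A)\) onto \(\mathcal H\). Hence \(I+\mathcal D(\lambda I-\mathcal A)^{-1}\) is a bijection of \(\mathcal H\), and it is bounded because \(\mathcal D\in\mathcal L(\mathcal H)\) by Proposition~\ref{prop:damping}. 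By the bounded inverse theorem its inverse is bounded, and
\[
(\lambda I-\mathcal A_d)^{-1}=(\lambda I-\mathcal A)^{-1}\bigl(I+\mathcal D(\lambda I-\mathcal A)^{-1}\bigr)^{-1}.
\]

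The second step concludes the argument. The right-hand side is the compact operator \((\lambda I-\mathcal A)^{-1}\) from Proposition~\ref{prop:compact-resolvent}, composed on the right with a bounded operator. It is therefore compact. Compactness of the resolvent at one point of \(\rho(\mathcal A_d)\) propagates to every point by the resolvent identity \(R(\mu)=R(\lambda)+(\lambda-\mu)R(\mu)R(\lambda)\).

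An equivalent route avoids the factorization. The graph norms of \(\mathcal A_d\) and \(\mathcal A\) are equivalent on \(D(\mathcal A)=H^{1/2}\times L^2\) because \(\mathcal D\) is bounded. The resolvent \((\lambda I-\mathcal A_d)^{-1}\) is therefore bounded into \(D(\mathcal A)\), and we compose it with the compact embedding \(D(\mathcal A)\hookrightarrow\mathcal H\).

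I do not expect a genuine obstacle. The only point requiring care is the bounded invertibility of \(I+\mathcal D(\lambda I-\mathcal A)^{-1}\), which follows from the surjectivity of \(\lambda I-\mathcal A_d\) guaranteed by maximal dissipativity.
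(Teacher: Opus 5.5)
Your proposal is correct and is essentially the paper's proof: the same factorization \(\lambda I-\mathcal A_d=\bigl[I+\mathcal D(\lambda I-\mathcal A)^{-1}\bigr](\lambda I-\mathcal A)\) on \(D(\mathcal A)\), followed by writing \((\lambda I-\mathcal A_d)^{-1}=(\lambda I-\mathcal A)^{-1}\bigl[I+\mathcal D(\lambda I-\mathcal A)^{-1}\bigr]^{-1}\) as a compact operator composed with a bounded one. Your extra remarks are also valid but not needed: propagating compactness to all of \(\rho(\mathcal A_d)\) through the resolvent identity, and the alternative route via equivalent graph norms and the compact embedding \(D(\mathcal A)\hookrightarrow\mathcal H\).
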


\begin{proof}
Fix \(\lambda>0\). Thanks to the fact that \(\mathcal A\) is skew-adjoint and
\(\mathcal A_d\) is maximal dissipative, we get
\[
\lambda\in\rho(\mathcal A)\cap\rho(\mathcal A_d).
\]
Set
\[
R(\lambda,\mathcal A)
=
(\lambda I-\mathcal A)^{-1}.
\]
Since
\[
\mathcal A_d
=
\mathcal A-\mathcal D,
\]
we have, on \(D(\mathcal A)\),
\[
\lambda I-\mathcal A_d
=
\lambda I-\mathcal A+\mathcal D
=
\left[
I+\mathcal D R(\lambda,\mathcal A)
\right]
(\lambda I-\mathcal A).
\]
Because
\[
\lambda I-\mathcal A_d
=
\left[I+\mathcal D R(\lambda,\mathcal A)\right]
(\lambda I-\mathcal A)
\]
and both \(\lambda I-\mathcal A\) and
\(\lambda I-\mathcal A_d\) are isomorphisms from
\(D(\mathcal A)\) onto \(\mathcal H\), the operator
\[
I+\mathcal D R(\lambda,\mathcal A):
\mathcal H\longrightarrow\mathcal H
\]
is an isomorphism. Therefore,
\[
(\lambda I-\mathcal A_d)^{-1}
=
R(\lambda,\mathcal A)
\left[
I+\mathcal D R(\lambda,\mathcal A)
\right]^{-1}.
\]
By Proposition~\ref{prop:compact-resolvent},
\[
R(\lambda,\mathcal A)
\in
\mathcal K(\mathcal H),
\]
while
\[
\left[
I+\mathcal D R(\lambda,\mathcal A)
\right]^{-1}
\in
\mathcal L(\mathcal H).
\]
Hence $(\lambda I-\mathcal A_d)^{-1}$ is compact as the composition of a compact operator with a bounded
operator. Thus \(\mathcal A_d\) has a compact resolvent.
\end{proof}

The compactness of the resolvent gives the following spectral
consequence.

\begin{corollary}\label{cor:discrete-spectrum-damped}
The spectrum of \(\mathcal A_d\) consists entirely of isolated
eigenvalues of finite algebraic multiplicity. Its only possible
accumulation point is infinity. Moreover,
\begin{equation}\label{sigma-a}
\sigma(\mathcal A_d)
\subset
\left\{
z\in\mathbb C:
\operatorname{Re}z\le0
\right\}.
\end{equation}
\end{corollary}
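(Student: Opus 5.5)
The corollary follows by combining Proposition~\ref{prop:compact-resolvent-damped} with the dissipativity of \(\mathcal A_d\). I will prove the two parts separately.

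\textbf{Discreteness of the spectrum.} By Proposition~\ref{prop:compact-resolvent-damped}, \(R(\lambda,\mathcal A_d)=(\lambda I-\mathcal A_d)^{-1}\) is compact for some (hence every) \(\lambda>0\). I will then invoke the standard spectral mapping for resolvents. A point \(\mu\neq\lambda\) belongs to \(\sigma(\mathcal A_d)\) if and only if \((\lambda-\mu)^{-1}\in\sigma(R(\lambda,\mathcal A_d))\setminus\{0\}\). Moreover, the eigenvectors and generalized eigenvectors of \(\mathcal A_d\) at \(\mu\) coincide with those of \(R(\lambda,\mathcal A_d)\) at \((\lambda-\mu)^{-1}\).

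By Riesz--Schauder theory, the nonzero spectrum of a compact operator consists of isolated eigenvalues of finite algebraic multiplicity, whose only possible accumulation point is \(0\). Transporting this through the Möbius map \(\nu\mapsto\lambda-\nu^{-1}\), which sends a neighbourhood of \(\nu=0\) to a neighbourhood of infinity, gives the first two claims. The spectrum of \(\mathcal A_d\) consists of isolated eigenvalues of finite algebraic multiplicity, and its only possible accumulation point is infinity.

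\textbf{Location in the closed left half-plane.} For \eqref{sigma-a}, I will use the energy identity directly rather than only the generation result. Let \(z\in\mathbb C\) with \(\operatorname{Re}z>0\) and \(Z\in D(\mathcal A_d)\). Proposition~\ref{prop:skew-symmetry} and Proposition~\ref{prop:damping} give
\[
\operatorname{Re}\langle (zI-\mathcal A_d)Z,Z\rangle_{\mathcal H}
=\operatorname{Re}z\,\|Z\|_{\mathcal H}^2+\|\mathcal B^*Z\|_{\mathcal U}^2
\ge \operatorname{Re}z\,\|Z\|_{\mathcal H}^2 .
\]
By Cauchy--Schwarz this implies \(\|(zI-\mathcal A_d)Z\|_{\mathcal H}\ge \operatorname{Re}z\,\|Z\|_{\mathcal H}\). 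Hence \(zI-\mathcal A_d\) is injective with closed range. It is therefore not an eigenvalue, and since the spectrum consists only of eigenvalues by the first part, \(z\in\rho(\mathcal A_d)\).

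Alternatively, I can simply quote the Hille--Yosida/Lumer--Phillips fact that the generator of a contraction semigroup has the open right half-plane in its resolvent set, as already noted at the start of this section. Either route yields \eqref{sigma-a}.

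\textbf{Expected difficulty.} I do not expect any real obstacle here. The only point needing care is applying the resolvent spectral-mapping identity correctly, including the matching of algebraic multiplicities via the Riesz projections, which commute with the resolvent.
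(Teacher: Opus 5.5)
Your proposal is correct and follows essentially the same route as the paper: discreteness of the spectrum comes from the compact resolvent (Proposition~\ref{prop:compact-resolvent-damped}) via standard Riesz--Schauder theory, and the inclusion \eqref{sigma-a} comes from dissipativity. The only cosmetic difference is that your main argument for the half-plane combines the a priori bound $\|(zI-\mathcal A_d)Z\|_{\mathcal H}\ge \operatorname{Re}z\,\|Z\|_{\mathcal H}$ with the discreteness you just proved, whereas the paper simply invokes maximal dissipativity, which is your stated alternative.
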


\begin{proof}
The first assertions follow from
Proposition~\ref{prop:compact-resolvent-damped} and the standard
spectral theory of operators with compact resolvent. Since \(\mathcal A_d\) is maximal dissipative, $\{z\in\mathbb C:\operatorname{Re}z>0\}
\subset
\rho(\mathcal A_d)$,
and therefore \eqref{sigma-a} follows.
\end{proof}

\subsection{Eigenvalues on the imaginary axis}

We next investigate the possible intersection of the spectrum of
\(\mathcal A_d\) with the imaginary axis. The dissipative structure
reduces this question to the corresponding spectral problem for the
conservative generator.

\begin{proposition}\label{prop:imaginary-kernel-reduction}
For every \(\tau\in\mathbb R\),
\begin{equation}\label{eq:imaginary-kernel-identity}
\ker(i\tau I-\mathcal A_d)
=
\ker(i\tau I-\mathcal A)
\cap
\ker(\mathcal B^*).
\end{equation}
\end{proposition}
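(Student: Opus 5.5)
The plan is to prove the two inclusions in \eqref{eq:imaginary-kernel-identity} separately, using only the skew-adjointness of \(\mathcal A\) (Theorem~\ref{thm:skew-adjoint-A}) and the identity \(\langle\mathcal DZ,Z\rangle_{\mathcal H}=\|\mathcal B^*Z\|_{\mathcal U}^2\) from Proposition~\ref{prop:damping}. Throughout, \(D(\mathcal A_d)=D(\mathcal A)\) and \(\mathcal D=\mathcal B\mathcal B^*\).

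\textbf{The easy inclusion ``\(\supset\)''.} Let \(Z\in D(\mathcal A)\) satisfy \(\mathcal AZ=i\tau Z\) and \(\mathcal B^*Z=0\). Then
\[
\mathcal DZ=\mathcal B\mathcal B^*Z=0,
\]
so \(\mathcal A_dZ=\mathcal AZ-\mathcal DZ=i\tau Z\). Hence \(Z\in\ker(i\tau I-\mathcal A_d)\).

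\textbf{The inclusion ``\(\subset\)''.} Let \(Z\in D(\mathcal A)\) with \(\mathcal A_dZ=i\tau Z\).
\begin{enumerate}
\item[(a)] Take the \(\mathcal H\)-inner product of this equation with \(Z\) and pass to real parts:
\[
\operatorname{Re}\langle\mathcal A_dZ,Z\rangle_{\mathcal H}=\operatorname{Re}\bigl(i\tau\|Z\|_{\mathcal H}^2\bigr)=0.
\]
\item[(b)] By Proposition~\ref{prop:skew-symmetry}, \(\operatorname{Re}\langle\mathcal AZ,Z\rangle_{\mathcal H}=0\). Since \(\mathcal D\) is self-adjoint, \(\langle\mathcal DZ,Z\rangle_{\mathcal H}\) is real. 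Therefore
\[
0=\operatorname{Re}\langle\mathcal A_dZ,Z\rangle_{\mathcal H}=-\langle\mathcal DZ,Z\rangle_{\mathcal H}=-\|\mathcal B^*Z\|_{\mathcal U}^2 ,
\]
which gives \(\mathcal B^*Z=0\).
\item[(c)] From \(\mathcal B^*Z=0\) we get \(\mathcal DZ=0\). Consequently \(\mathcal AZ=\mathcal A_dZ+\mathcal DZ=i\tau Z\), that is, \(Z\in\ker(i\tau I-\mathcal A)\).
\end{enumerate}

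If one wants the conditions explicitly, \(\mathcal B^*Z=0\) means \(ah=0\) and \(bP_\beta u=0\) in \(L^2(\mathbb T)\). We do not need this form for the identity itself.

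There is no real obstacle in this argument. The one point requiring care is the bookkeeping that \(\mathcal D\) is bounded and defined on all of \(\mathcal H\), which guarantees that \(\mathcal AZ\) and \(\mathcal A_dZ\) are both well defined on the same domain \(D(\mathcal A)\).
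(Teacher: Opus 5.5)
Your proposal is correct and follows essentially the same argument as the paper. The paper proves the forward inclusion by taking the real part of $\langle\mathcal A_dZ,Z\rangle_{\mathcal H}$ and using skew-adjointness of $\mathcal A$ to get $\mathcal B^*Z=0$, hence $\mathcal AZ=i\tau Z$. For the reverse inclusion it notes that $\mathcal DZ=0$ gives $\mathcal A_dZ=i\tau Z$, exactly as you do.
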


\begin{proof}
Let $Z\in\ker(i\tau I-\mathcal A_d)$. Then
\[
\mathcal A_dZ=i\tau Z.
\]
Taking the real part of the inner product with \(Z\), we obtain
\[
0
=
\operatorname{Re}
\langle i\tau Z,Z\rangle_{\mathcal H}
=
\operatorname{Re}
\langle\mathcal A_dZ,Z\rangle_{\mathcal H}.
\]
Since
\[
\mathcal A_d
=
\mathcal A-\mathcal B\mathcal B^*
\]
and \(\mathcal A\) is skew-adjoint,
\[
\operatorname{Re}
\langle\mathcal A_dZ,Z\rangle_{\mathcal H}
=
-
\|\mathcal B^*Z\|_{\mathcal U}^2.
\]
Hence
\[
\mathcal B^*Z=0.
\]
Consequently,
\[
\mathcal DZ
=
\mathcal B\mathcal B^*Z
=
0,
\]
and therefore
\[
\mathcal AZ
=
\mathcal A_dZ+\mathcal DZ
=
i\tau Z.
\]
Thus
\[
Z
\in
\ker(i\tau I-\mathcal A)
\cap
\ker(\mathcal B^*).
\]

Conversely, suppose that
\[
Z
\in
\ker(i\tau I-\mathcal A)
\cap
\ker(\mathcal B^*).
\]
Then \(\mathcal DZ=0\), and hence
\[
\mathcal A_dZ
=
\mathcal AZ-\mathcal DZ
=
i\tau Z.
\]
This proves \eqref{eq:imaginary-kernel-identity}.
\end{proof}

The previous proposition is particularly useful because the
eigenspaces of the conservative generator were explicitly determined
in Section~\ref{sec:spectral-analysis}.

\begin{theorem}\label{thm:no-imaginary-eigenvalues-damped}
Assume that $a,b\in C^\infty(\mathbb T;\mathbb R)$ with $a,b\ge0$, $a\not\equiv0$, and  $b\not\equiv0$. Then
\begin{equation*}
\ker(i\tau I-\mathcal A_d)
=
\{0\},
\qquad
\tau\in\mathbb R.
\end{equation*}
In particular, \(\mathcal A_d\) has no eigenvalues on the imaginary
axis.
\end{theorem}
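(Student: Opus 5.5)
By Proposition~\ref{prop:imaginary-kernel-reduction}, it suffices to show that
\[
\ker(i\tau I-\mathcal A)\cap\ker(\mathcal B^*)=\{0\}
\qquad\text{for every }\tau\in\mathbb R.
\]
The eigenspaces of $\mathcal A$ are explicit, so there are only two cases: $\tau=0$, and $\tau=\pm\omega_n$ with $n\in\mathbb N^*$. For any other $\tau$ the kernel of $i\tau I-\mathcal A$ is already trivial. Throughout, we use that $\mathcal B^*Z=0$ means $ah=0$ and $bP_\beta u=0$ in $L^2(\mathbb T)$. Since $a,b$ are continuous, this forces $h=0$ on the open set $\{a>0\}\supset\omega_1$ and $P_\beta u=0$ on $\{b>0\}\supset\omega_2$.

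\textbf{Case $\tau=0$.} Here $Z=(c_1,c_2)$ is a pair of constants. From $ah=c_1a=0$ and $a\not\equiv0$ we get $c_1=0$. Since $P_\beta$ acts on constants as multiplication by $p_\beta(0)=1$, we have $bP_\beta u=c_2b$, so $b\not\equiv0$ gives $c_2=0$.

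\textbf{Case $\tau=\pm\omega_n$.} By \eqref{eq:eigenspaces-conservative}, $h=c_+e^{inx}+c_-e^{-inx}$. This is a trigonometric polynomial, hence real analytic. It vanishes on the nonempty open set $\omega_1$, so it vanishes identically, and therefore $c_+=c_-=0$. Then $u$ also vanishes, because its coefficients are $\mp\frac{n}{\omega_n}c_\pm$. Thus $Z=0$.

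The argument actually uses only $a\not\equiv0$ for $\tau\neq0$, and needs $b$ only for the constant mode $u=c_2$. I do not expect a real obstacle. The one point needing care is the step from $ah=0$ in $L^2$ to $h=0$ on an open set: this holds because $a\ge a_0>0$ on $\omega_1$, so $h=0$ a.e.\ on $\omega_1$. One then invokes the identity theorem for the finite trigonometric sum, or equivalently the linear independence of $e^{\pm inx}$ restricted to an interval. Combining the two cases with Proposition~\ref{prop:imaginary-kernel-reduction} gives the conclusion.
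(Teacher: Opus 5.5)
Your proposal is correct and follows essentially the same route as the paper: the reduction via Proposition~\ref{prop:imaginary-kernel-reduction}, the case split $\tau=0$ versus $\tau=\pm\omega_n$, and the observation that the trigonometric polynomial $h$ vanishes on $\omega_1$ and hence identically. The only cosmetic difference is that you get $u=0$ from the explicit eigenspace formula, whereas the paper uses the second component of the eigenvalue equation, $-h_x=i\tau u$ with $\tau\neq0$.
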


\begin{proof}
Let $Z=(h,u)^\top
\in
\ker(i\tau I-\mathcal A_d)$.   By Proposition~\ref{prop:imaginary-kernel-reduction}, $\mathcal AZ=i\tau Z$ and
\begin{equation}\label{eq:Bstar-zero-imaginary}
\mathcal B^*Z
=
(ah,bP_\beta u)^\top
=
0.
\end{equation}
We distinguish the cases \(\tau=0\) and \(\tau\neq0\).

\medskip
\noindent
\textbf{Case 1: \(\tau=0\).}

By the spectral characterization of \(\mathcal A\),
\[
\ker(\mathcal A)
=
\operatorname{span}
\left\{
\binom{1}{0},
\binom{0}{1}
\right\}.
\]
Hence
\[
h(x)=c_1,
\qquad
u(x)=c_2,
\]
for some \(c_1,c_2\in\mathbb C\). Note that \(P_\beta\) acts as the
identity on the zero Fourier mode, thus
\[
P_\beta u=c_2.
\]
Condition \eqref{eq:Bstar-zero-imaginary} gives
\[
a(x)c_1=0,
\qquad
b(x)c_2=0.
\]
As \(a\not\equiv0\) and \(b\not\equiv0\), it follows that $c_1=c_2=0$. Thus $Z=0$.

\medskip
\noindent
\textbf{Case 2: \(\tau\neq0\).}

Since \(Z\) is an eigenvector of \(\mathcal A\), the spectral
characterization obtained in Section~\ref{sec:spectral-analysis}
implies that
\[
\tau=\pm\omega_n
\]
for some \(n\in\mathbb N^*\). If \(\tau=\omega_n\), the description of the eigenspace gives
\[
h(x)
=
c_+e^{inx}
+
c_-e^{-inx}
\]
for some \(c_+,c_-\in\mathbb C\). The same form holds when
\(\tau=-\omega_n\).

We have that \(a\) is continuous, nonnegative, and not identically zero; thus the set
\[
\{x\in\mathbb T:a(x)>0\}
\]
contains a nonempty open subset \(\omega_1\).
Thanks to \eqref{eq:Bstar-zero-imaginary}, $ah=0$, and therefore
\[
h=0
\qquad
\text{on }\omega_1.
\]
The function \(h\) is a trigonometric polynomial; thus it is analytic and hence
cannot vanish on a nonempty open set unless it vanishes identically. So,  $h\equiv0$.  Taking into account that \(\mathcal AZ=i\tau Z\), the second component of the eigenvalue equation gives
\[
-h_x=i\tau u.
\]
As \(\tau\neq0\), it follows that \(u\equiv0\). Hence \(Z=0\). Therefore,
\[
\ker(i\tau I-\mathcal A_d)
=
\{0\}
\]
for every \(\tau\in\mathbb R\).
\end{proof}

The compactness of the resolvent now allows us to pass from the absence
of imaginary eigenvalues to the absence of spectrum on the imaginary
axis.

\begin{corollary}\label{cor:imaginary-axis-resolvent-damped}
The imaginary axis belongs to the resolvent set of the damped
generator:
\[
i\mathbb R
\subset
\rho(\mathcal A_d).
\]
Consequently,
\begin{equation}\label{eq:spectrum-strict-left-half-plane}
\sigma(\mathcal A_d)
\subset
\left\{
z\in\mathbb C:
\operatorname{Re}z<0
\right\}.
\end{equation}
\end{corollary}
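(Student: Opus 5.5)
The plan is to combine the spectral information already obtained for $\mathcal A_d$ with the general spectral theory of operators with compact resolvent. Fix $\tau\in\mathbb R$. By Proposition~\ref{prop:compact-resolvent-damped}, $\mathcal A_d$ has compact resolvent. Corollary~\ref{cor:discrete-spectrum-damped} therefore shows that every point of $\sigma(\mathcal A_d)$ is an isolated eigenvalue of finite algebraic multiplicity.

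Suppose, for contradiction, that $i\tau\in\sigma(\mathcal A_d)$. Then $i\tau$ is an eigenvalue, so $\ker(i\tau I-\mathcal A_d)\neq\{0\}$. This contradicts Theorem~\ref{thm:no-imaginary-eigenvalues-damped}, which applies because $a,b$ satisfy \eqref{a-b-ab}. Hence $i\tau\in\rho(\mathcal A_d)$ for every $\tau\in\mathbb R$, that is, $i\mathbb R\subset\rho(\mathcal A_d)$.

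For \eqref{eq:spectrum-strict-left-half-plane}, I combine this with \eqref{sigma-a} from Corollary~\ref{cor:discrete-spectrum-damped}. That inclusion places $\sigma(\mathcal A_d)$ in $\{\operatorname{Re}z\le0\}$. Removing the imaginary axis, which we have just shown lies in the resolvent set, leaves $\sigma(\mathcal A_d)\subset\{\operatorname{Re}z<0\}$.

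The only point needing care is the step \emph{spectrum $\Rightarrow$ eigenvalue}. It holds because the resolvent is compact at one point $\lambda>0$. Then for any $\mu$, the operator $\mu I-\mathcal A_d$ equals $(\lambda I-\mathcal A_d)\bigl[I-(\lambda-\mu)(\lambda I-\mathcal A_d)^{-1}\bigr]$, a composition with a Fredholm operator of index zero. By the Fredholm alternative, injectivity therefore implies bijectivity. This is standard, and I would just cite it rather than reprove it. No genuine obstacle is expected.
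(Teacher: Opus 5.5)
Your proposal is correct and follows essentially the same argument as the paper. The paper also combines the compact resolvent (so the spectrum consists only of eigenvalues), the absence of eigenvalues on $i\mathbb R$, and the inclusion $\sigma(\mathcal A_d)\subset\{\operatorname{Re}z\le0\}$ to get the strict inclusion. Your Fredholm factorization $\mu I-\mathcal A_d=(\lambda I-\mathcal A_d)\bigl[I-(\lambda-\mu)(\lambda I-\mathcal A_d)^{-1}\bigr]$ correctly justifies the step from spectrum to eigenvalue, which the paper simply takes from standard spectral theory.
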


\begin{proof}
By Proposition~\ref{prop:compact-resolvent-damped},
\(\mathcal A_d\) has compact resolvent. Hence every point of
\(\sigma(\mathcal A_d)\) is an isolated eigenvalue of finite algebraic
multiplicity. By
Theorem~\ref{thm:no-imaginary-eigenvalues-damped}, \(\mathcal A_d\)
has no eigenvalues on \(i\mathbb R\). Therefore,
\[
\sigma(\mathcal A_d)\cap i\mathbb R
=
\varnothing,
\]
which is equivalent to $i\mathbb R\subset\rho(\mathcal A_d)$.  On the other hand, Corollary~\ref{cor:discrete-spectrum-damped} implies
\[
\sigma(\mathcal A_d)
\subset
\{z\in\mathbb C:\operatorname{Re}z\le0\}.
\]
Combining the two assertions yields \eqref{eq:spectrum-strict-left-half-plane}.
\end{proof}

\begin{remark}
The strict spectral inclusion
\[
\sigma(\mathcal A_d)
\subset
\{z\in\mathbb C:\operatorname{Re}z<0\}
\]
does not by itself imply exponential stability. Indeed, since
\(S_d(t)\) is a bounded semigroup on a Hilbert space, the
Gearhart--Pr\"uss theorem \cite{Gearhart1978,Pruss1984} requires, in addition to
\(i\mathbb R\subset\rho(\mathcal A_d)\), the uniform resolvent bound
\[
\sup_{\tau\in\mathbb R}
\left\|
(i\tau I-\mathcal A_d)^{-1}
\right\|_{\mathcal L(\mathcal H)}
<\infty.
\]
The latter property is not a consequence of the strict spectral
inclusion above. The high-frequency behavior of the resolvent will
therefore play a decisive role in determining the decay rate of the
damped semigroup.
\end{remark}

\subsection{Strong stability of the damped semigroup}

We now establish the asymptotic stability of the damped semigroup.
Recall that \(\mathcal A_d\) generates a contraction semigroup
\(\{S_d(t)\}_{t\ge0}\) on \(\mathcal H\). Hence
\[
\|S_d(t)\|_{\mathcal L(\mathcal H)}
\le1,
\qquad
t\ge0.
\]
Moreover, the spectral analysis of the previous subsection gives
\[
i\mathbb R
\subset
\rho(\mathcal A_d),
\]
and therefore
\[
\sigma(\mathcal A_d)\cap i\mathbb R
=
\varnothing.
\]

We recall the following classical stability criterion.

\begin{theorem}[Arendt--Batty--Lyubich--V\~u
\cite{ArendtBatty,LyubichVu}]
\label{thm:ABLV}
Let \(A\) be the generator of a bounded \(C_0\)-semigroup
\(\{S(t)\}_{t\ge0}\) on a Banach space \(X\). Assume that
\[
\sigma(A)\cap i\mathbb R
\]
is at most countable and that
\[
\sigma_p(A^*)\cap i\mathbb R
=
\varnothing.
\]
Then
\[
\lim_{t\to\infty}
\|S(t)x\|_X
=
0,
\qquad
x\in X.
\]
\end{theorem}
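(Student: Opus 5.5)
Since this is the classical Arendt--Batty--Lyubich--V\~u theorem, I would reproduce the standard proof through the \emph{limit isometric semigroup}. I would also point out that in our application, Corollary~\ref{cor:imaginary-axis-resolvent-damped} gives the stronger condition $\sigma(\mathcal A_d)\cap i\mathbb R=\varnothing$, so only the simplest case of the argument is really needed.

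\textbf{Step 1: reduction to a quotient.} Set $M=\sup_{t\ge0}\|S(t)\|$ and
\[
X_0=\Bigl\{x\in X:\ \lim_{t\to\infty}\|S(t)x\|_X=0\Bigr\}.
\]
This is a closed $S$-invariant subspace; closedness uses the uniform bound $M$. Define the seminorm
\[
\ell(x)=\limsup_{t\to\infty}\|S(t)x\|_X .
\]
It is finite, satisfies $\ell(x)\le M\|x\|$, and $\ell(S(s)x)=\ell(x)$ for every $s\ge0$. Moreover $\ell(x)=0$ if and only if $x\in X_0$. On the completion $Y$ of $X/X_0$ with respect to $\ell$, the semigroup induces an \emph{isometric} $C_0$-semigroup $V(t)$ with generator $B$. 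The goal is to show $Y=\{0\}$, which is exactly the statement $X_0=X$.

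\textbf{Step 2: spectral transfer.} Show that the boundary spectrum is inherited: $\sigma(B)\cap i\mathbb R\subset\sigma(A)\cap i\mathbb R$. For $i\eta\in\rho(A)$, the resolvent $R(i\eta,A)$ passes to the quotient and is bounded for $\ell$, since it commutes with $S(t)$. Using the resolvent identity near the axis, together with the isometric bound $\|R(\lambda,B)\|\le(\operatorname{Re}\lambda)^{-1}$, one sees that this operator is a two-sided inverse of $i\eta-B$. Hence $\sigma(B)\cap i\mathbb R$ is countable. Since $V$ consists of isometries, $\sigma(B)$ is contained in the closed left half-plane. If $\sigma(B)$ is entirely in $i\mathbb R$, $V$ extends to an isometric group; if not, $\sigma(B)$ is the whole closed half-plane, which is uncountable and impossible here since the boundary spectrum of $B$ would then be all of $i\mathbb R$, contradicting countability. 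So $V$ extends to an isometric group, and its spectrum $\sigma(B)\subset i\mathbb R$ is countable and closed.

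\textbf{Step 3: the key obstacle.} Suppose $Y\ne\{0\}$. A generator of an isometric group on a nonzero space has nonempty spectrum, since otherwise the resolvent would be an entire, bounded function vanishing at infinity. A nonempty countable closed set has an isolated point $i\eta$, by Baire. The Riesz projection at $i\eta$ gives a nonzero invariant subspace on which $V$ is an isometric group whose generator has spectrum $\{i\eta\}$. By the Gelfand--Hille type theorem, $V(t)=e^{i\eta t}$ there, so $B$ has an eigenvector $y$ with $By=i\eta y$. Pick $\phi\in Y^*$ with $\phi(y)\ne0$. Pulling back through the quotient and averaging $e^{-i\eta t}\langle S(t)x,\cdot\rangle$ along a Banach limit produces a nonzero $x^*\in X^*$ with $A^*x^*=i\eta x^*$. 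This contradicts $\sigma_p(A^*)\cap i\mathbb R=\varnothing$, so $Y=\{0\}$.

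I expect Step 3 to be the main obstacle: the passage from an isolated spectral point to the single-point-spectrum isometric group, and then back to an eigenfunctional of $A^*$. In our setting this step can be bypassed altogether. Because $\sigma(\mathcal A_d)\cap i\mathbb R=\varnothing$, Step 2 already gives $\sigma(B)=\varnothing$, which forces $Y=\{0\}$ without any eigenfunctional argument. Equivalently, one could use the Korevaar--Newman contour (Tauberian) argument applied to $R(\lambda,\mathcal A_d)$, which extends holomorphically across $i\mathbb R$.
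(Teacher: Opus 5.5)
The paper does not prove this statement. It quotes it from Arendt--Batty and Lyubich--V\~u and, in the proof of Theorem~\ref{thm:strong-stability-damped}, only checks the two hypotheses for $\mathcal A_d$. Your outline is a correct version of the Lyubich--V\~u argument through the limit isometric semigroup.

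Arendt and Batty's original proof is Tauberian instead. It works with the Laplace transform $\lambda\mapsto R(\lambda,A)x$ of an orbit and a contour-integral argument of Ingham--Korevaar type. The dual hypothesis enters through the ergodic condition $\alpha R(\alpha+i\eta,A)x\to0$ as $\alpha\downarrow0$, which follows from the density of $\operatorname{Ran}(i\eta-A)$.

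Your route is more structural and shows where each hypothesis is used: countability gives the group property and an isolated spectral point, and the dual hypothesis gives the final contradiction. The Tauberian route is more economical in the case actually needed here, where $\sigma(\mathcal A_d)\cap i\mathbb R=\varnothing$. Apply the contour theorem once to the bounded function $t\mapsto S_d(t)\mathcal A_dx$. Its integral over $[0,T]$ is $S_d(T)x-x$, and its Laplace transform extends holomorphically to a neighbourhood of the closed right half-plane with value $-x$ at $0$. Hence $S_d(T)x\to0$ for $x\in D(\mathcal A_d)$, and by density for all $x$.

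Your remark that only this degenerate case matters is right. It also makes the paper's adjoint step automatic, since $\sigma(\mathcal A_d^*)$ is the complex conjugate of $\sigma(\mathcal A_d)$. Strong stability even follows afterwards from the Borichev--Tomilov bound $\|S_d(t)\mathcal A_d^{-1}\|\le C/t$ together with the density of $D(\mathcal A_d)$.

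Three points in your sketch need better justification.

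\textbf{Step 2.} The bound $\|R(\lambda,B)\|\le(\operatorname{Re}\lambda)^{-1}$ is not what does the work. You need that the induced operators $\tilde R(\lambda)$ coincide with $R(\lambda,B)$ for $\operatorname{Re}\lambda>0$. This follows by applying the quotient map $q\colon X\to Y$ to $R(\lambda,A)x=\int_0^\infty e^{-\lambda t}S(t)x\,dt$. The resolvent identity also passes to the quotient and gives $\tilde R(i\eta)=R(\lambda,B)\bigl(I+(\lambda-i\eta)\tilde R(i\eta)\bigr)$, hence $\tilde R(i\eta)=(i\eta-B)^{-1}$.

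\textbf{Step 3, nonempty spectrum.} An entire resolvent is not obviously bounded in the strip $|\operatorname{Re}\lambda|<1$, so use Newman's device. If $\sigma(B)=\varnothing$, then for $y\in Y$ and $y^*\in Y^*$ the function $f(\lambda)=\langle R(\lambda,B)y,y^*\rangle$ is entire with $|f(\lambda)|\le\|y\|\,\|y^*\|/|\operatorname{Re}\lambda|$ off $i\mathbb R$. On $|\lambda|=r$ we have $|1+\lambda^2/r^2|=2|\operatorname{Re}\lambda|/r$. The maximum principle then gives $|(1+\lambda^2/r^2)f(\lambda)|\le2\|y\|\,\|y^*\|/r$ on the disc. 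Letting $r\to\infty$ forces $f\equiv0$, which is impossible on $Y\neq\{0\}$. Your shortcut for $\mathcal A_d$ rests on this same fact.

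\textbf{Step 3, the eigenfunctional.} The Banach limit can be dropped. Any nonzero $\psi$ in the range of the adjoint Riesz projection satisfies $V(t)^*\psi=e^{i\eta t}\psi$. Then $x^*=\psi\circ q$ is a nonzero eigenfunctional of $A^*$, because $q$ has dense range. If you keep the Banach limit, note that proving $x^*\neq0$ requires approximating $y\in Y$ by elements of $q(X)$.
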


We can now deduce the strong stability of the damped dynamics.

\begin{theorem}\label{thm:strong-stability-damped}
Let \(S_d(t)\) be the contraction semigroup generated by
\(\mathcal A_d\). Then
\[
\lim_{t\to\infty}
\|S_d(t)Z_0\|_{\mathcal H}
=
0,
\qquad
Z_0\in\mathcal H.
\]
\end{theorem}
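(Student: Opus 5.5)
The plan is to apply Theorem~\ref{thm:ABLV} with $X=\mathcal H$ and $A=\mathcal A_d$. Two of its hypotheses are already available from the preceding results.

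\begin{itemize}
\item $S_d(t)$ is a contraction semigroup, hence bounded.
\item By Corollary~\ref{cor:imaginary-axis-resolvent-damped}, $\sigma(\mathcal A_d)\cap i\mathbb R=\varnothing$, which is trivially at most countable.
\end{itemize}

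The only remaining hypothesis is $\sigma_p(\mathcal A_d^*)\cap i\mathbb R=\varnothing$. The approach is to identify the adjoint explicitly. Since $\mathcal A$ is skew-adjoint (Theorem~\ref{thm:skew-adjoint-A}) and $\mathcal D\in\mathcal L(\mathcal H)$ is self-adjoint (Proposition~\ref{prop:damping}), the standard rule for adjoints of bounded perturbations gives
\[
\mathcal A_d^*=(\mathcal A-\mathcal D)^*=-\mathcal A-\mathcal D,\qquad D(\mathcal A_d^*)=D(\mathcal A).
\]

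Now suppose $\mathcal A_d^*Z=i\tau Z$ for some $Z\in D(\mathcal A)$ and $\tau\in\mathbb R$.

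\begin{itemize}
\item Take the real part of the inner product with $Z$. Because $\operatorname{Re}\langle\mathcal AZ,Z\rangle_{\mathcal H}=0$, this gives $\|\mathcal B^*Z\|_{\mathcal U}^2=0$. Hence $\mathcal DZ=0$.
\item Consequently $\mathcal AZ=-i\tau Z=i(-\tau)Z$, so $Z\in\ker(i(-\tau)I-\mathcal A)\cap\ker(\mathcal B^*)$.
\item By Proposition~\ref{prop:imaginary-kernel-reduction}, applied with $-\tau$ in place of $\tau$, this set equals $\ker(-i\tau I-\mathcal A_d)$.
\item By Theorem~\ref{thm:no-imaginary-eigenvalues-damped}, that kernel is trivial, so $Z=0$.
\end{itemize}

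Therefore $\mathcal A_d^*$ has no eigenvalues on $i\mathbb R$, and Theorem~\ref{thm:ABLV} yields $\|S_d(t)Z_0\|_{\mathcal H}\to0$ for every $Z_0\in\mathcal H$.

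There is a shortcut that avoids the adjoint computation. Since $\mathcal A_d$ has compact resolvent, so does $\mathcal A_d^*$. Moreover, $\overline{i\mathbb R}=i\mathbb R$ and $\sigma(\mathcal A_d^*)=\overline{\sigma(\mathcal A_d)}$. Hence $i\mathbb R\subset\rho(\mathcal A_d^*)$ follows directly from Corollary~\ref{cor:imaginary-axis-resolvent-damped}. I would mention this as a remark.

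None of these steps is a serious obstacle. The one point needing care is justifying the domain identity $D(\mathcal A_d^*)=D(\mathcal A)$. This is the standard fact that $(T+B)^*=T^*+B^*$ when $B$ is bounded.
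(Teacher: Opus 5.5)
Your proposal is correct, and its main line differs from the paper's. You identify $\mathcal A_d^*=-\mathcal A-\mathcal D$ with $D(\mathcal A_d^*)=D(\mathcal A)$. You then exclude imaginary eigenvalues of the adjoint by the dissipation argument of Proposition~\ref{prop:imaginary-kernel-reduction}, which lands in $\ker(-i\tau I-\mathcal A_d)=\{0\}$ by Theorem~\ref{thm:no-imaginary-eigenvalues-damped}.

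The paper's proof is exactly your closing remark. Invertibility of $i\tau I-\mathcal A_d$ for every $\tau\in\mathbb R$ gives invertibility of $(i\tau I-\mathcal A_d)^*=-i\tau I-\mathcal A_d^*$, with inverse $[(i\tau I-\mathcal A_d)^{-1}]^*$. Hence $i\mathbb R\subset\rho(\mathcal A_d^*)$.

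\textbf{Comparison.} The paper's route is shorter and needs no description of $\mathcal A_d^*$ or its domain. However, it relies on the full inclusion $i\mathbb R\subset\rho(\mathcal A_d)$, which here comes from compactness of the resolvent. Your route uses only the absence of imaginary \emph{eigenvalues} of $\mathcal A_d$ for the adjoint hypothesis of Theorem~\ref{thm:ABLV}. In effect it shows, for $\mathcal A$ skew-adjoint and $\mathcal D=\mathcal B\mathcal B^*$ bounded, that $\ker(i\tau I-\mathcal A_d^*)=\ker(-i\tau I-\mathcal A_d)$. So it would still deliver that hypothesis in settings without compact resolvent, where $\sigma(\mathcal A_d)\cap i\mathbb R$ is only known to be countable and free of eigenvalues.

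\textbf{Minor point.} In your remark, compactness of the resolvent of $\mathcal A_d^*$ plays no role. The identity $\sigma(T^*)=\{\bar z: z\in\sigma(T)\}$ holds for any closed densely defined operator $T$ on a Hilbert space.
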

\begin{proof}
Since \(S_d(t)\) is a contraction semigroup, it satisfies
\[
\sup_{t\ge0}\|S_d(t)\|_{\mathcal L(\mathcal H)}\le1.
\]
Moreover, by Corollary~\ref{cor:imaginary-axis-resolvent-damped},
\[
i\mathbb R\subset\rho(\mathcal A_d),
\]
and hence
\[
\sigma(\mathcal A_d)\cap i\mathbb R=\varnothing.
\]
In particular, this intersection is at most countable.

For every \(\tau\in\mathbb R\), the operator
\(i\tau I-\mathcal A_d\) is invertible. Taking adjoints gives
\[
-i\tau I-\mathcal A_d^*
\]
invertible, with
\[
(-i\tau I-\mathcal A_d^*)^{-1}
=
\left[
(i\tau I-\mathcal A_d)^{-1}
\right]^*.
\]
Note that \(\tau\in\mathbb R\) is arbitrary, thus
\[
i\mathbb R\subset\rho(\mathcal A_d^*),
\]
and therefore
\[
\sigma_p(\mathcal A_d^*)\cap i\mathbb R=\varnothing.
\]
The Arendt--Batty--Lyubich--V\~u theorem,
Theorem~\ref{thm:ABLV}, now yields
\[
\lim_{t\to\infty}
\|S_d(t)Z_0\|_{\mathcal H}
=
0,
\]
for every \(Z_0\in\mathcal H\).
\end{proof}

In terms of the energy, strong stability immediately gives the
following consequence.

\begin{corollary}
Let
\[
Z(t)
=
S_d(t)Z_0
=
\binom{h(t)}{u(t)},
\qquad
Z_0\in\mathcal H.
\]
Then $E(t)\longrightarrow0$,  as $t\to\infty$, where
\[
E(t)
=
\frac12
\|h(t)\|_{L^2(\mathbb T)}^2
+
\frac12
\langle
P_\beta u(t),u(t)
\rangle_{H^{1/2},H^{-1/2}}.
\]
\end{corollary}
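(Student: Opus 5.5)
The corollary follows directly from Theorem~\ref{thm:strong-stability-damped} together with the identification of the energy with the square of the energy norm. Let $Z_0\in\mathcal H$ and $Z(t)=S_d(t)Z_0=(h(t),u(t))^\top$. The first step is to recall how the inner product on $\mathcal H$ is defined:
\[
\|Z(t)\|_{\mathcal H}^2=\|h(t)\|_{L^2(\mathbb T)}^2+\langle P_\beta u(t),u(t)\rangle_{H^{1/2},H^{-1/2}},
\]
so that $E(t)=\frac12\|Z(t)\|_{\mathcal H}^2$ for every $t\ge0$. This holds for every $Z_0\in\mathcal H$ and not only for classical solutions, since the semigroup maps $\mathcal H$ into $\mathcal H$ and the duality pairing is well defined for $u(t)\in H^{-1/2}(\mathbb T)$ by \eqref{eq:Pbeta-Hminus}.

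The second step is to invoke Theorem~\ref{thm:strong-stability-damped}, which gives $\|S_d(t)Z_0\|_{\mathcal H}\to0$ as $t\to\infty$. Squaring this limit and multiplying by $\frac12$ yields $E(t)\to0$.

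There is no real obstacle here. The only point to check is that the energy written with the duality pairing agrees with the $\mathcal H$-norm for merely mild solutions. This follows from the Fourier representation $\langle P_\beta u,u\rangle=2\pi\sum_k p_\beta(k)|\widehat u(k)|^2$, which converges absolutely because $p_\beta(k)\asymp\langle k\rangle^{-1}$.
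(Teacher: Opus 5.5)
Your proposal is correct and is the same argument as the paper's: identify $E(t)=\tfrac12\|Z(t)\|_{\mathcal H}^2$ from the definition of the energy inner product, then apply Theorem~\ref{thm:strong-stability-damped}. Your remark that this identification holds for mild solutions, via the absolutely convergent Fourier representation, is a harmless extra detail that the paper leaves implicit.
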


\begin{proof}
By the definition of the energy inner product on \(\mathcal H\),
\[
E(t)
=
\frac12
\|Z(t)\|_{\mathcal H}^2.
\]
The conclusion therefore follows immediately from
Theorem~\ref{thm:strong-stability-damped}.
\end{proof}

\begin{remark}
Theorem~\ref{thm:strong-stability-damped} is qualitative and does not
provide a uniform rate of decay. Although $i\mathbb R
\subset
\rho(\mathcal A_d)$,
this does not imply that
\[
\sup_{\tau\in\mathbb R}
\left\|
(i\tau I-\mathcal A_d)^{-1}
\right\|_{\mathcal L(\mathcal H)}
<
\infty.
\]
Thus, strong stability alone does not imply exponential stability.
Determining the decay rate requires a quantitative analysis of
\[
(i\tau I-\mathcal A_d)^{-1}
\]
as \(|\tau|\to\infty\). This is the purpose of the next section.
\end{remark}

\section{High-frequency behavior of the resolvent}
\label{sec:high-frequency-resolvent}

Having established the strong stability of the damped semigroup, we
now turn to the quantitative behavior of the resolvent on the
imaginary axis. At this stage, no growth estimate is assumed for $(i\tau I-\mathcal A_d)^{-1}$
as \(|\tau|\to\infty\). Our first objective is to determine whether
the resolvent can remain uniformly bounded at high frequencies.

Recall from Corollary~\ref{cor:imaginary-axis-resolvent-damped} that $i\mathbb R\subset\rho(\mathcal A_d)$.  Since the resolvent depends continuously on the spectral parameter, it is uniformly bounded on every compact subset of the imaginary axis. Thus, the question of uniform boundedness reduces entirely to the
regime $|\tau|\to\infty$.

We first show that uniform boundedness fails when the damping acting on
the first component is genuinely localized. More precisely, for the
quasimode construction below, we assume that there exists a nonempty
open set
\begin{equation}\label{eq:undamped-open-set}
\mathcal O
\Subset
\mathbb T\setminus\operatorname{supp}(a).
\end{equation}
The high-frequency upper bound established later will not require this
additional localization assumption. Throughout this section, we use the energy norm of \(\mathcal H\),
namely
\[
\|Z\|_{\mathcal H}^2
=
\|h\|_{L^2(\mathbb T)}^2
+
\langle
P_\beta u,u
\rangle_{H^{1/2},H^{-1/2}},
\qquad
Z=(h,u)^\top.
\]

\subsection{Construction of high-frequency quasimodes}

Recall that the nonzero eigenvalues of the conservative generator are $\pm i\omega_k$, with $k\in\mathbb N^*$,
where
\[
\omega_k
=
\sqrt{\frac{k\tanh(\beta k)}{\beta}}.
\]
For every \(k\in\mathbb Z\setminus\{0\}\), define
\[
\Phi_k^+(x)
=
\begin{pmatrix}
1
\\[1mm]
-\dfrac{k}{\omega_{|k|}}
\end{pmatrix}
e^{ikx}.
\]
A direct computation gives
\begin{equation*}
\mathcal A\Phi_k^+
=
i\omega_{|k|}\Phi_k^+.
\end{equation*}
Indeed,
\[
\omega_{|k|}^2
=
k\,m_\beta(k),
\]
and therefore
\[
T_\beta
\left(
-\frac{k}{\omega_{|k|}}e^{ikx}
\right)
=
i\omega_{|k|}e^{ikx},
\]
while
\[
-\partial_xe^{ikx}
=
-ik e^{ikx}
=
i\omega_{|k|}
\left(
-\frac{k}{\omega_{|k|}}e^{ikx}
\right).
\]
Moreover, since
\[
p_\beta(k)
=
\frac{m_\beta(k)}{k},
\qquad
k\neq0,
\]
we have
\begin{equation}\label{eq:equipartition-eigenmode}
p_\beta(k)
\frac{k^2}{\omega_{|k|}^2}
=
1.
\end{equation}
Thus, the two components of each conservative eigenmode contribute
equally to the energy.

Choose a nonzero function $\chi\in C_c^\infty(\mathcal O)$ and define $h_N(x)=\chi(x)e^{iNx}$, for \(N\in\mathbb N^*\). By \eqref{eq:undamped-open-set},
\begin{equation}\label{eq:a-hN-zero}
ah_N=0
\qquad
\text{on }\mathbb T.
\end{equation}
Writing
\[
\chi(x)
=
\sum_{j\in\mathbb Z}
\widehat\chi(j)e^{ijx},
\]
we obtain
\[
h_N(x)
=
\sum_{k\in\mathbb Z}
\widehat\chi(k-N)e^{ikx}.
\]

We associate with \(h_N\) the second component \(u_N\), defined by
\begin{equation}\label{eq:def-uN-quasimode}
\widehat u_N(k)
=
-
\frac{k}{\omega_{|k|}}
\widehat h_N(k),
\qquad
k\neq0,
\end{equation}
and $\widehat u_N(0)=0$. We then set $ Z_N=(h_N,u_N)^\top$.  Since \(\chi\in C^\infty(\mathbb T)\), its Fourier coefficients decay
faster than any polynomial. Moreover,
\[
\frac{|k|}{\omega_{|k|}}
\lesssim_\beta
\langle k\rangle^{1/2},
\qquad k\neq0.
\]
It follows from \eqref{eq:def-uN-quasimode} that \(u_N\in L^2(\mathbb T)\)
(and in fact \(u_N\in C^\infty(\mathbb T)\)). Hence
\[
Z_N\in D(\mathcal A_d)=D(\mathcal A), \quad \forall N\in\mathbb N^*.
\]

\begin{lemma}\label{lem:quasimode-normalization}
There exist constants \(c,C>0\), independent of \(N\), such that
\[
c
\le
\|Z_N\|_{\mathcal H}
\le
C
\]
for all sufficiently large \(N\).
\end{lemma}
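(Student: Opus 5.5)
The plan is to compute $\|Z_N\|_{\mathcal H}^2$ exactly in Fourier variables and then use the equipartition identity \eqref{eq:equipartition-eigenmode}. By definition of the energy norm,
\[
\|Z_N\|_{\mathcal H}^2
=
2\pi\sum_{k\in\mathbb Z}|\widehat h_N(k)|^2
+
2\pi\sum_{k\in\mathbb Z}p_\beta(k)|\widehat u_N(k)|^2 .
\]
The first sum equals $\|h_N\|_{L^2}^2=\|\chi\|_{L^2}^2$, since $|e^{iNx}|=1$; this is a positive constant independent of $N$ because $\chi\not\equiv0$. For the second sum, \eqref{eq:def-uN-quasimode} together with \eqref{eq:equipartition-eigenmode} gives, for every $k\neq0$,
\[
p_\beta(k)|\widehat u_N(k)|^2
=
p_\beta(k)\frac{k^2}{\omega_{|k|}^2}|\widehat h_N(k)|^2
=
|\widehat h_N(k)|^2 .
\]
The $k=0$ term vanishes because $\widehat u_N(0)=0$. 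Therefore
\[
\|Z_N\|_{\mathcal H}^2
=
2\|\chi\|_{L^2}^2-2\pi|\widehat h_N(0)|^2
=
2\|\chi\|_{L^2}^2-2\pi|\widehat\chi(-N)|^2 .
\]

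The upper bound follows at once, with $C^2=2\|\chi\|_{L^2}^2$. For the lower bound, $\chi\in C^\infty(\mathbb T)$, so $\widehat\chi(-N)\to0$ rapidly as $N\to\infty$. Hence, for $N$ sufficiently large, $2\pi|\widehat\chi(-N)|^2\le\|\chi\|_{L^2}^2$, and we obtain $\|Z_N\|_{\mathcal H}^2\ge\|\chi\|_{L^2}^2$. This gives $c=\|\chi\|_{L^2}$. In fact, even without taking $N$ large, $\|Z_N\|_{\mathcal H}^2\ge\|h_N\|_{L^2}^2=\|\chi\|_{L^2}^2$, because the $u$-part is nonnegative, so the lower bound holds for all $N$.

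None of these steps is a real obstacle. The only points to check are that the pairing $\langle P_\beta u_N,u_N\rangle$ is given by the absolutely convergent Fourier series, which holds because $u_N\in L^2$, and that the zero mode is excluded consistently in \eqref{eq:def-uN-quasimode}. The statement's restriction to "sufficiently large $N$" is therefore only needed if one insists on the exact asymptotic $\|Z_N\|_{\mathcal H}^2\to2\|\chi\|_{L^2}^2$.
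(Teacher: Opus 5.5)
Your proposal is correct and follows the paper's proof essentially verbatim. Parseval combined with the equipartition identity gives $\|Z_N\|_{\mathcal H}^2=2\|\chi\|_{L^2}^2-2\pi|\widehat\chi(-N)|^2$, and the rapid decay of $\widehat\chi$ finishes the argument; your extra observation that both bounds in fact hold for every $N$, since $\langle P_\beta u_N,u_N\rangle\ge0$, is also correct and slightly sharpens the statement.
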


\begin{proof}
By Parseval's identity and
\eqref{eq:equipartition-eigenmode},
\begin{align*}
\langle P_\beta u_N,u_N\rangle_{H^{1/2},H^{-1/2}}
=
2\pi
\sum_{k\neq0}
p_\beta(k)
\frac{k^2}{\omega_{|k|}^2}
|\widehat h_N(k)|^2
=
2\pi
\sum_{k\neq0}
|\widehat h_N(k)|^2.
\end{align*}
Hence
\[
\|Z_N\|_{\mathcal H}^2
=
2\|h_N\|_{L^2(\mathbb T)}^2
-
2\pi|\widehat h_N(0)|^2.
\]
As
\[
\|h_N\|_{L^2(\mathbb T)}
=
\|\chi\|_{L^2(\mathbb T)}
\]
and $\widehat h_N(0)=\widehat\chi(-N)$, the smoothness of \(\chi\) implies
\[
|\widehat\chi(-N)|
=
O(N^{-M}),
\]
for every \(M>0\). Therefore,
\[
\|Z_N\|_{\mathcal H}^2
\longrightarrow
2\|\chi\|_{L^2(\mathbb T)}^2,
\qquad
\text{as }N\to\infty.
\]
Because \(\chi\neq0\), the conclusion follows.
\end{proof}

We next estimate the defect of \(Z_N\) as an approximate eigenvector of
the conservative generator at frequency \(\omega_N\).

\begin{lemma}
There exists \(C>0\) such that
\begin{equation}\label{eq:conservative-quasimode-error}
\|
(i\omega_N I-\mathcal A)Z_N
\|_{\mathcal H}
\le
\frac{C}{\sqrt N}
\end{equation}
for all sufficiently large \(N\).
\end{lemma}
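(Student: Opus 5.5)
Since $Z_N$ is built mode by mode from the conservative eigenvectors, I will compute $(i\omega_N I-\mathcal A)Z_N$ directly in Fourier variables. For each $k\neq0$, $\widehat Z_N(k)=\widehat h_N(k)\,(1,-k/\omega_{|k|})^\top$ is a multiple of the eigenvector $\widehat{\Phi_k^+}$, so $A(k)\widehat Z_N(k)=i\omega_{|k|}\widehat Z_N(k)$. Hence
\[
\widehat{(i\omega_N I-\mathcal A)Z_N}(k)=i(\omega_N-\omega_{|k|})\,\widehat h_N(k)\begin{pmatrix}1\\ -k/\omega_{|k|}\end{pmatrix},\qquad k\neq0.
\]
For $k=0$ we have $A(0)=0$ and $\widehat u_N(0)=0$, so the $k=0$ coefficient is $(i\omega_N\widehat\chi(-N),0)^\top$.

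Next I compute the energy norm using the equipartition identity \eqref{eq:equipartition-eigenmode}, exactly as in Lemma~\ref{lem:quasimode-normalization}. This gives
\[
\|(i\omega_N I-\mathcal A)Z_N\|_{\mathcal H}^2=4\pi\sum_{k\neq0}|\omega_N-\omega_{|k|}|^2|\widehat\chi(k-N)|^2+2\pi\omega_N^2|\widehat\chi(-N)|^2 .
\]
The last term is $O(N^{1-2M})$ for every $M$, because $\omega_N\lesssim N^{1/2}$ and $\widehat\chi$ decays rapidly. It is therefore negligible.

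The main step is to bound the frequency mismatch $|\omega_N-\omega_{|k|}|$ in terms of $|k-N|$, since the weight $|\widehat\chi(k-N)|^2$ concentrates near $k=N$. I split the sum into two regions.
\begin{itemize}
\item \emph{Near region}, $|k-N|\le N/2$. Here $k>0$ and $k\asymp N$. Write $\omega(x)=\sqrt{f(x)}$ with $f$ as in the proof of Proposition~\ref{prop:eigenvalue-properties}. Then $f'$ is bounded on $(0,\infty)$ (indeed $f'\le 1/\beta+\sup_{y>0} y\,\mathrm{sech}^2 y$), and $\omega'(x)=f'(x)/(2\sqrt{f(x)})\lesssim_\beta x^{-1/2}$ for $x\ge1$. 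The mean value theorem then gives $|\omega_N-\omega_k|\lesssim N^{-1/2}|k-N|$.
\item \emph{Far region}, $|k-N|>N/2$. Here I use the crude bound $|\omega_N-\omega_{|k|}|\lesssim \langle k\rangle^{1/2}+N^{1/2}\lesssim \langle k-N\rangle^{1/2}$, which holds because $N<2|k-N|$ and $|k|\le |k-N|+N$.
\end{itemize}

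Summing the two regions gives
\[
\|(i\omega_N I-\mathcal A)Z_N\|_{\mathcal H}^2\lesssim \frac1N\sum_j j^2|\widehat\chi(j)|^2+\sum_{|j|>N/2}\langle j\rangle|\widehat\chi(j)|^2+O(N^{-\infty}).
\]
The first sum equals $\frac{1}{2\pi N}\|\chi'\|_{L^2}^2$. The second is $O(N^{-M})$ by the smoothness of $\chi$. Taking square roots yields the claimed bound $C/\sqrt N$.

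The only delicate point is the uniform derivative bound $\omega'(x)\lesssim x^{-1/2}$. It follows from the boundedness of $f'$ together with $f(x)\asymp x$ for $x\ge1$, which was already used in \eqref{rk}. This is the $N^{-1/2}$ gain coming from the sublinear dispersion.
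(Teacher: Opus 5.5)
Your proof is correct and follows the paper's argument essentially step by step. Both use the expansion in the conservative eigenmodes, equipartition to compute the energy norm, the separately estimated $k=0$ term, and the near/far splitting at $|k-N|=N/2$, with the mean value bound $\omega'(\xi)\lesssim\xi^{-1/2}$ in the near region and rapid decay of $\widehat\chi$ in the far region. The only blemish is cosmetic: the second term in your bound for $f'$ should carry a factor $\beta^{-1}$, i.e.\ $f'\le\beta^{-1}\bigl(1+\sup_{y>0}y\operatorname{sech}^2 y\bigr)$, which does not affect the boundedness you need.
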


\begin{proof}
For the nonzero Fourier modes,
\[
Z_N
=
\sum_{k\neq0}
\widehat h_N(k)\Phi_k^+
+
\widehat h_N(0)
\binom{1}{0}.
\]
Consequently,
\[
(i\omega_NI-\mathcal A)Z_N
=
i
\sum_{k\neq0}
\left(
\omega_N-\omega_{|k|}
\right)
\widehat h_N(k)\Phi_k^+
+
R_N^{(0)},
\]
where
\[
R_N^{(0)}
=
i\omega_N\widehat h_N(0)
\binom{1}{0}.
\]
Since
\[
\widehat h_N(0)=\widehat\chi(-N),
\qquad
\omega_N=O(N^{1/2}),
\]
the rapid decay of \(\widehat\chi\) gives
\[
\|R_N^{(0)}\|_{\mathcal H}
=
O(N^{-M})
\]
for every \(M>0\), after possibly changing \(M\).

For \(\xi>0\), set
\[
\omega(\xi)
=
\sqrt{
\frac{\xi\tanh(\beta\xi)}{\beta}
}.
\]
Then
\begin{equation*}
\omega'(\xi)
=
\frac{
\tanh(\beta\xi)
+
\beta\xi\,\operatorname{sech}^2(\beta\xi)
}{
2\beta\omega(\xi)
}
=
O(\xi^{-1/2})
\end{equation*}
as \(\xi\to\infty\).  We split the Fourier sum into
\[
|k-N|\le\frac N2
\qquad\text{and}\qquad
|k-N|>\frac N2.
\]

In the first region, \(k\ge N/2>0\), so \(k\asymp N\). By the mean
value theorem,
\[
|\omega_k-\omega_N|
\le
C\frac{|k-N|}{\sqrt N}.
\]
Therefore,
\begin{align*}
\sum_{|k-N|\le N/2}
|\omega_k-\omega_N|^2
|\widehat h_N(k)|^2
\le
\frac{C}{N}
\sum_{j\in\mathbb Z}
j^2|\widehat\chi(j)|^2
\le
\frac{C}{N}
\|\chi\|_{H^1(\mathbb T)}^2.
\end{align*}
Now, in the complementary region, the rapid decay of
\(\widehat\chi(k-N)\), together with
\[
\omega_{|k|}+\omega_N
\lesssim
1+|k|^{1/2}+N^{1/2},
\]
implies that, for every \(M>0\),
\[
\sum_{|k-N|>N/2}
|\omega_{|k|}-\omega_N|^2
|\widehat\chi(k-N)|^2
=
O(N^{-M}).
\]
Using the orthogonality of the Fourier modes together with
\eqref{eq:equipartition-eigenmode}, we conclude that
\[
\|
(i\omega_NI-\mathcal A)Z_N
\|_{\mathcal H}^2
\le
\frac{C}{N},
\]
which proves \eqref{eq:conservative-quasimode-error}.
\end{proof}

\subsection{Contribution of the damping}

We next estimate the contribution of the damping operator on the
quasimodes constructed above. This contribution is of lower order than
the conservative quasimode defect.

\begin{lemma}
There exists \(C>0\) such that
\begin{equation}\label{eq:damping-quasimode-error}
\|\mathcal D Z_N\|_{\mathcal H}
\le
\frac{C}{N}
\end{equation}
for all sufficiently large \(N\).
\end{lemma}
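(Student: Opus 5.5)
Since $\mathcal D Z_N=(a^2h_N,\,b^2P_\beta u_N)^\top$ and $a h_N=0$ on $\mathbb T$ by \eqref{eq:a-hN-zero}, the first component vanishes identically. Hence
\[
\|\mathcal D Z_N\|_{\mathcal H}^2=\langle P_\beta(b^2P_\beta u_N),\,b^2P_\beta u_N\rangle_{H^{1/2},H^{-1/2}}\lesssim_\beta \|b^2P_\beta u_N\|_{H^{-1/2}(\mathbb T)}^2\le \|b^2P_\beta u_N\|_{L^2(\mathbb T)}^2,
\]
by \eqref{eq:Pbeta-Hminus}. Multiplication by the smooth function $b^2$ is bounded on $L^2$, so it suffices to show $\|P_\beta u_N\|_{L^2}\lesssim N^{-1}$.

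By \eqref{eq:def-uN-quasimode}, for $k\ne0$ we have
\[
\widehat{P_\beta u_N}(k)=-p_\beta(k)\frac{k}{\omega_{|k|}}\widehat\chi(k-N)=-\frac{m_\beta(k)}{\omega_{|k|}}\widehat\chi(k-N),
\]
and the zero mode vanishes. Since $m_\beta$ is bounded and $\omega_{|k|}\asymp_\beta |k|^{1/2}$ for $k\neq0$, this gives $|\widehat{P_\beta u_N}(k)|\lesssim |k|^{-1/2}|\widehat\chi(k-N)|$. Split the sum as in the previous lemma.

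\emph{Near region} $|k-N|\le N/2$: here $|k|\ge N/2$, so this part contributes at most $CN^{-1}\sum_j|\widehat\chi(j)|^2\lesssim N^{-1}\|\chi\|_{L^2}^2$.

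\emph{Far region} $|k-N|>N/2$: here $|\widehat\chi(k-N)|\lesssim_M \langle k-N\rangle^{-M}$, so this part is $O(N^{-M})$.

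Altogether $\|P_\beta u_N\|_{L^2}^2\lesssim N^{-1}$, i.e. $\|\mathcal DZ_N\|_{\mathcal H}\lesssim N^{-1/2}$.

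This is the main obstacle: the crude bound yields only $N^{-1/2}$, not the claimed $C/N$. To gain the extra $N^{-1/2}$ I would use the $H^{-1/2}$ norm rather than $L^2$, with $b^2$ as a multiplier. Write $w_N:=P_\beta u_N$. Its Fourier transform is concentrated near $k\approx N$ with amplitude $\sim N^{-1/2}$, i.e. $w_N\approx -\beta^{-1/2}N^{-1/2}\,\tilde\chi_N$ with $\tilde\chi_N$ a smooth bump times $e^{iNx}$. Since $b^2$ is smooth, $\widehat{b^2 w_N}(k)=\sum_j\widehat{b^2}(k-j)\widehat w_N(j)$ is also concentrated near $k\approx N$, up to $O(N^{-M})$ tails from the rapid decay of $\widehat{b^2}$. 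Therefore
\[
\|b^2w_N\|_{H^{-1/2}}^2\lesssim N^{-1}\|b^2w_N\|_{L^2}^2+O(N^{-M})\lesssim N^{-2}.
\]
Concretely, I would split $\widehat{b^2w_N}(k)$ by $|k|\ge N/4$ versus $|k|<N/4$. In the second region either $|k-j|\ge N/8$ or $|j-N|\ge N/2$, so both decay estimates give $O(N^{-M})$. This yields $\|\mathcal DZ_N\|_{\mathcal H}\le C/N$.
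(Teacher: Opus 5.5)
Your argument is correct, but it reaches the $N^{-1}$ bound by a more hands-on route than the paper. The paper never passes through $L^2$. It uses $|\widehat{P_\beta u_N}(k)|^2=p_\beta(k)|\widehat\chi(k-N)|^2$ to compute the $H^{-1/2}$ norm of $P_\beta u_N$ directly:
\[
\|P_\beta u_N\|_{H^{-1/2}}^2\asymp\sum_{k\neq0}\langle k\rangle^{-1}p_\beta(k)|\widehat\chi(k-N)|^2\lesssim\sum_{k\neq0}\langle k\rangle^{-2}|\widehat\chi(k-N)|^2=O(N^{-2}),
\]
with the same near/far splitting you use. It then simply invokes the fact that multiplication by the smooth function $b^2$ is bounded on $H^{-1/2}(\mathbb T)$ (e.g.\ by duality from $H^{1/2}$).

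Your ``obstacle'' arises only because you discarded the $H^{-1/2}$ weight before multiplying by $b^2$. You then recover it by showing that $\widehat{b^2w_N}$ stays concentrated at $|k|\gtrsim N$ up to $O(N^{-M})$ tails. That step needs only $L^2$-boundedness of multiplication and the rapid decay of $\widehat{b^2}$ and $\widehat\chi$. The convolution argument is sound: for $|k|<N/4$ and $|j-N|<N/2$ one has $|k-j|>N/4$, and a pointwise $O(N^{-M})$ bound summed over $O(N)$ indices remains $O(N^{-M+1})$. It has the merit of making the frequency-localization mechanism explicit. The paper's route is shorter because it packages exactly this mechanism into the standard $H^{-1/2}$ multiplier estimate.
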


\begin{proof}
Recall that
\[
\mathcal D Z_N
=
\begin{pmatrix}
a^2h_N
\\
b^2P_\beta u_N
\end{pmatrix}.
\]
By \eqref{eq:a-hN-zero}, $a^2h_N=0$. It therefore remains to estimate the second component.

For \(k\neq0\), using
\eqref{eq:def-uN-quasimode}, we obtain
\begin{align*}
\widehat{P_\beta u_N}(k)
&=
-p_\beta(k)
\frac{k}{\omega_{|k|}}
\widehat h_N(k)
=
-\frac{m_\beta(k)}{\omega_{|k|}}
\widehat h_N(k)
=
-\frac{\omega_{|k|}}{k}
\widehat h_N(k),
\end{align*}
where we used $\omega_{|k|}^2 = k\,m_\beta(k)$. Consequently,
\[
\left|
\widehat{P_\beta u_N}(k)
\right|^2
=
p_\beta(k)
|\widehat h_N(k)|^2,
\qquad
k\neq0.
\]
Because
\[
p_\beta(k)
\lesssim_\beta
\langle k\rangle^{-1},
\]
we obtain
\begin{align*}
\|P_\beta u_N\|_{H^{-1/2}(\mathbb T)}^2
\asymp
\sum_{k\neq0}
\langle k\rangle^{-1}
p_\beta(k)
|\widehat h_N(k)|^2
\lesssim_\beta
\sum_{k\neq0}
\langle k\rangle^{-2}
|\widehat\chi(k-N)|^2.
\end{align*}

We split the sum into
\[
|k-N|\le\frac N2
\qquad\text{and}\qquad
|k-N|>\frac N2.
\]
In the first region, \(k\asymp N\), and therefore
\[
\sum_{|k-N|\le N/2}
\langle k\rangle^{-2}
|\widehat\chi(k-N)|^2
\le
\frac{C}{N^2}
\sum_{j\in\mathbb Z}
|\widehat\chi(j)|^2
\le
\frac{C}{N^2}.
\]
In the complementary region, the rapid decay of
\(\widehat\chi(k-N)\) gives
\[
\sum_{|k-N|>N/2}
\langle k\rangle^{-2}
|\widehat\chi(k-N)|^2
=
O(N^{-M})
\]
for every \(M>0\). Hence
\[
\|P_\beta u_N\|_{H^{-1/2}(\mathbb T)}
\le
\frac{C}{N}.
\]

Since multiplication by \(b^2\in C^\infty(\mathbb T)\) is bounded on
\(H^{-1/2}(\mathbb T)\),
\[
\|b^2P_\beta u_N\|_{H^{-1/2}(\mathbb T)}
\le
C
\|P_\beta u_N\|_{H^{-1/2}(\mathbb T)}
\le
\frac{C}{N}.
\]
Using the equivalence between the energy norm on \(\mathcal H\) and the
standard product norm on
\(L^2(\mathbb T)\times H^{-1/2}(\mathbb T)\), we conclude that
\[
\|\mathcal D Z_N\|_{\mathcal H}
\le
\frac{C}{N},
\]which proves the result.
\end{proof}

Combining \eqref{eq:conservative-quasimode-error} and \eqref{eq:damping-quasimode-error}, and using that  $\mathcal A_d=\mathcal A-\mathcal D$, we obtain 
\begin{equation}\label{eq:damped-quasimode-error}
\|
(i\omega_NI-\mathcal A_d)Z_N
\|_{\mathcal H}
\le
\frac{C}{\sqrt N}
\end{equation}
for all sufficiently large \(N\).

\subsection{Failure of uniform resolvent boundedness}

We are now in a position to rule out uniform boundedness of the
resolvent on the imaginary axis under the genuine localization
assumption \eqref{eq:undamped-open-set}.

\begin{theorem}\label{thm:resolvent-not-uniformly-bounded}
Assume that there exists a nonempty open set \(\mathcal O\) satisfying
\eqref{eq:undamped-open-set}. Then the resolvent of the damped generator
is not uniformly bounded on the imaginary axis. More precisely, there
exists \(c>0\) such that
\[
\|
(i\omega_NI-\mathcal A_d)^{-1}
\|_{\mathcal L(\mathcal H)}
\ge
c\sqrt N
\]
for all sufficiently large \(N\). Moreover, since
\[
\omega_N
\sim
\frac{1}{\sqrt\beta}\sqrt N,
\]
there exists \(c_\beta>0\) such that
\begin{equation}\label{eq:resolvent-linear-lower-bound}
\|
(i\omega_NI-\mathcal A_d)^{-1}
\|_{\mathcal L(\mathcal H)}
\ge
c_\beta\omega_N
\end{equation}
for all sufficiently large \(N\). In particular,
\begin{equation}\label{eq:resolvent-unbounded-imaginary-axis}
\sup_{\tau\in\mathbb R}
\|
(i\tau I-\mathcal A_d)^{-1}
\|_{\mathcal L(\mathcal H)}
=
+\infty.
\end{equation}
\end{theorem}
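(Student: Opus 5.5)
The plan is to turn the quasimode estimates of the previous subsections into a resolvent lower bound through the elementary inequality
\[
\|W\|_{\mathcal H}
=
\|(i\omega_NI-\mathcal A_d)^{-1}(i\omega_NI-\mathcal A_d)W\|_{\mathcal H}
\le
\|(i\omega_NI-\mathcal A_d)^{-1}\|_{\mathcal L(\mathcal H)}
\|(i\omega_NI-\mathcal A_d)W\|_{\mathcal H},
\]
valid for every $W\in D(\mathcal A_d)$. This is meaningful because $i\omega_N\in\rho(\mathcal A_d)$ by Corollary~\ref{cor:imaginary-axis-resolvent-damped}. We apply it with $W=Z_N$, which lies in $D(\mathcal A_d)$ as noted after \eqref{eq:def-uN-quasimode}.

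First, the lower bound of Lemma~\ref{lem:quasimode-normalization} gives $\|Z_N\|_{\mathcal H}\ge c$ for $N$ large. Second, the combined defect estimate \eqref{eq:damped-quasimode-error} gives $\|(i\omega_NI-\mathcal A_d)Z_N\|_{\mathcal H}\le C/\sqrt N$. That estimate is just the triangle inequality applied to $(i\omega_NI-\mathcal A_d)Z_N=(i\omega_NI-\mathcal A)Z_N+\mathcal DZ_N$, using \eqref{eq:conservative-quasimode-error} and \eqref{eq:damping-quasimode-error} with $C/N\le C/\sqrt N$. The defect is nonzero, since otherwise $Z_N$ would be an imaginary eigenvector, contradicting Theorem~\ref{thm:no-imaginary-eigenvalues-damped}. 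Dividing therefore yields
\[
\|(i\omega_NI-\mathcal A_d)^{-1}\|_{\mathcal L(\mathcal H)}\ge \frac{c}{C}\sqrt N .
\]

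For \eqref{eq:resolvent-linear-lower-bound}, Proposition~\ref{prop:eigenvalue-properties}(v) gives $\omega_N\le 2\beta^{-1/2}\sqrt N$ for $N$ large, so $\sqrt N\ge \tfrac{\sqrt\beta}{2}\omega_N$. Substituting gives the bound with $c_\beta=c\sqrt\beta/(2C)$. Since $\omega_N\to\infty$, the supremum over $\tau\in\mathbb R$ is infinite, which is \eqref{eq:resolvent-unbounded-imaginary-axis}.

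There is no real obstacle left, because the substantive work is contained in the quasimode lemmas. The only point requiring care is that the localization assumption \eqref{eq:undamped-open-set} enters through \eqref{eq:a-hN-zero}. This makes the $a^2h_N$ term vanish exactly, so the damping defect reduces to the $O(N^{-1})$ term from $b^2P_\beta u_N$. Without that assumption, $a^2h_N$ would be of order one and the argument would fail.
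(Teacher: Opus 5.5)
Your proposal is correct and follows the paper's proof essentially verbatim. You apply the resolvent to $(i\omega_NI-\mathcal A_d)Z_N$, combine the lower bound of Lemma~\ref{lem:quasimode-normalization} with the defect estimate \eqref{eq:damped-quasimode-error}, and pass from $\sqrt N$ to $\omega_N$ through the asymptotics of the dispersion relation, with your explicit $c_\beta$ simply making the paper's last step precise; the remark that the defect is nonzero is harmless but unnecessary, since $c\le\|(i\omega_NI-\mathcal A_d)^{-1}\|_{\mathcal L(\mathcal H)}\,C/\sqrt N$ already gives the bound without dividing.
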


\begin{proof}
By Corollary~\ref{cor:imaginary-axis-resolvent-damped}, $i\omega_N\in\rho(\mathcal A_d)$, for every \(N\). Hence
\[
Z_N
=
(i\omega_NI-\mathcal A_d)^{-1}
(i\omega_NI-\mathcal A_d)Z_N,
\]
and therefore
\[
\|Z_N\|_{\mathcal H}
\le
\|
(i\omega_NI-\mathcal A_d)^{-1}
\|_{\mathcal L(\mathcal H)}
\,
\|
(i\omega_NI-\mathcal A_d)Z_N
\|_{\mathcal H}.
\]
By Lemma~\ref{lem:quasimode-normalization},
\[
\|Z_N\|_{\mathcal H}
\ge
c_0>0
\]
for all sufficiently large \(N\), whereas
\eqref{eq:damped-quasimode-error} gives
\[
\|
(i\omega_NI-\mathcal A_d)Z_N
\|_{\mathcal H}
\le
\frac{C}{\sqrt N}.
\]
It follows that
\[
\|
(i\omega_NI-\mathcal A_d)^{-1}
\|_{\mathcal L(\mathcal H)}
\ge
c\sqrt N.
\]
Finally, since
\[
\omega_N
\sim
\frac{1}{\sqrt\beta}\sqrt N,
\]
we obtain \eqref{eq:resolvent-linear-lower-bound}. Finally, this also yields
\eqref{eq:resolvent-unbounded-imaginary-axis} because \(\omega_N\to\infty\).
\end{proof}

As an immediate consequence, exponential stability is impossible under
genuine localization.

\begin{corollary}
Assume that \eqref{eq:undamped-open-set} holds. Then the damped
semigroup \(\{S_d(t)\}_{t\ge0}\) is not exponentially stable.
\end{corollary}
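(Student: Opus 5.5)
The plan is to argue by contradiction, using the Gearhart--Pr\"uss characterization already recalled in the Remark following Corollary~\ref{cor:imaginary-axis-resolvent-damped}. Recall that $\{S_d(t)\}_{t\ge0}$ is a contraction, hence bounded, $C_0$-semigroup on the Hilbert space $\mathcal H$.

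Suppose $S_d$ were exponentially stable. Then there would exist $M\ge1$ and $\varepsilon>0$ with $\|S_d(t)\|_{\mathcal L(\mathcal H)}\le Me^{-\varepsilon t}$ for all $t\ge0$. Under this assumption the Laplace transform representation
\[
(\lambda I-\mathcal A_d)^{-1}=\int_0^\infty e^{-\lambda t}S_d(t)\,dt
\]
converges absolutely for $\operatorname{Re}\lambda>-\varepsilon$. Taking $\lambda=i\tau$ gives
\[
\|(i\tau I-\mathcal A_d)^{-1}\|_{\mathcal L(\mathcal H)}\le \int_0^\infty Me^{-\varepsilon t}\,dt=\frac{M}{\varepsilon}
\]
uniformly in $\tau\in\mathbb R$. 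This uses only the elementary direction of the Gearhart--Pr\"uss theorem, so no Hilbert-space structure is needed for this step.

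This uniform bound contradicts \eqref{eq:resolvent-unbounded-imaginary-axis} in Theorem~\ref{thm:resolvent-not-uniformly-bounded}. That theorem applies because the localization assumption \eqref{eq:undamped-open-set} is exactly its hypothesis, and it yields $\|(i\omega_NI-\mathcal A_d)^{-1}\|\ge c_\beta\omega_N\to\infty$. The semigroup is therefore not exponentially stable.

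There is no real obstacle here. The only points to verify are that the Laplace integral is legitimate on the imaginary axis, which follows from the exponential bound and standard semigroup theory, and that the quasimode estimate of the previous theorem is used only through its conclusion \eqref{eq:resolvent-unbounded-imaginary-axis}.
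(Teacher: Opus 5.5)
Your proof is correct and essentially matches the paper's: both contradict \eqref{eq:resolvent-unbounded-imaginary-axis} through the Gearhart--Pr\"uss characterization. The only difference is that you prove the elementary direction actually used (exponential decay implies a uniformly bounded resolvent on $i\mathbb R$) via the Laplace transform instead of quoting the full equivalence, which also shows the Hilbert-space structure is not needed here.
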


\begin{proof}
By the Gearhart--Pr\"uss theorem
\cite{Gearhart1978,Pruss1984}, a bounded \(C_0\)-semigroup on a Hilbert
space is exponentially stable if and only if  $i\mathbb R\subset\rho(\mathcal A_d)$ and
\[
\sup_{\tau\in\mathbb R}
\left\|
(i\tau I-\mathcal A_d)^{-1}
\right\|_{\mathcal L(\mathcal H)}
<\infty.
\]
In our setting, \(S_d(t)\) is a contraction semigroup on
\(\mathcal H\), and $i\mathbb R\subset\rho(\mathcal A_d)$,
by Corollary~\ref{cor:imaginary-axis-resolvent-damped}. However,
Theorem~\ref{thm:resolvent-not-uniformly-bounded} yields
\[
\sup_{\tau\in\mathbb R}
\left\|
(i\tau I-\mathcal A_d)^{-1}
\right\|_{\mathcal L(\mathcal H)}
=
+\infty.
\]
Therefore, \(S_d(t)\) cannot be exponentially stable.
\end{proof}

Under the same genuine-localization assumption, the lower bound
\eqref{eq:resolvent-linear-lower-bound} also shows that the exponent
\(1\) is necessary in any polynomial resolvent estimate.
Indeed, if
\[
\|
(i\tau I-\mathcal A_d)^{-1}
\|_{\mathcal L(\mathcal H)}
=
O(|\tau|^\alpha),
\qquad
\text{as }|\tau|\to\infty,
\]
then necessarily $\alpha\ge1$. We now turn to the corresponding upper bound of order \(|\tau|\).

\subsection{An upper bound for the high-frequency resolvent}

The lower bound obtained in
Theorem~\ref{thm:resolvent-not-uniformly-bounded} shows that, under the
genuine localization assumption \eqref{eq:undamped-open-set}, any
polynomial resolvent estimate must have exponent at least one. We now
establish the matching upper bound
\[
\left\|
(i\tau I-\mathcal A_d)^{-1}
\right\|_{\mathcal L(\mathcal H)}
\lesssim
|\tau|,
\qquad
|\tau|\to\infty.
\]
This upper bound does not require the additional localization
assumption \eqref{eq:undamped-open-set}. Combined with the quasimode
lower bound, it will show that the exponent \(1\) is sharp whenever the
damping is genuinely localized. We divide the proof into several steps.

\subsubsection*{\textbf{Step 1.} Reduction to an \(L^2\times L^2\) system}

Let $Z=(h,u)^\top\in D(\mathcal A_d)$ solve
\[
(i\tau I-\mathcal A_d)Z
=
F
=(f,g)^\top,
\qquad
\tau\in\mathbb R.
\]
In components, this system reads
\begin{equation}\label{eq:resolvent-upper-second}
\begin{cases}
i\tau h-T_\beta u+a^2h
=
f,
\\[1mm]
i\tau u+h_x+b^2P_\beta u
=
g.
\end{cases}
\end{equation}

We introduce the variable $v=P_\beta^{1/2}u$ and the Fourier multiplier \(K_\beta\) defined by
\[
\widehat{K_\beta w}(k)
=
\kappa_\beta(k)\widehat w(k),
\qquad
\kappa_\beta(k)
=
k\sqrt{p_\beta(k)}.
\]
Note that \(p_\beta\) is real, even, and strictly positive; thus
\(\kappa_\beta\) is real and odd. In particular, \(K_\beta\) is a
self-adjoint Fourier multiplier on \(L^2(\mathbb T)\), with natural
domain
\[
D(K_\beta)=H^{1/2}(\mathbb T).
\]
Moreover,
\[
K_\beta^2
=
T_\beta\partial_x,
\]
because
\[
\kappa_\beta(k)^2
=
k^2p_\beta(k)
=
\frac{k\tanh(\beta k)}{\beta}.
\]
We set
\[
L_\beta
:=
K_\beta^2
=
T_\beta\partial_x.
\]
Thus \(L_\beta\) is a nonnegative self-adjoint Fourier multiplier of
order one.

Using
\[
m_\beta(k)
=
kp_\beta(k),
\]
we obtain
\[
T_\beta u
=
-iK_\beta v,
\]
while
\[
P_\beta^{1/2}h_x
=
iK_\beta h.
\]
Therefore, applying \(P_\beta^{1/2}\) to the second equation in
\eqref{eq:resolvent-upper-second}, the system becomes
\begin{align}
i\tau h+iK_\beta v+a^2h
&=
f,
\label{eq:L2-resolvent-first}
\\
i\tau v+iK_\beta h+C_bv
&=
q,
\label{eq:L2-resolvent-second}
\end{align}
where
\[
q=P_\beta^{1/2}g,
\qquad
C_b
=
P_\beta^{1/2}b^2P_\beta^{1/2}.
\]
The operator \(C_b\) is bounded, self-adjoint, and nonnegative on
\(L^2(\mathbb T)\). Indeed, $C_b=C_b^*$,  and
\[
\langle C_bv,v\rangle_{L^2}
=
\|bP_\beta^{1/2}v\|_{L^2}^2
\ge0.
\]

For convenience, set
\[
W
=
\binom{h}{v},
\qquad
G
=
\binom{f}{q}.
\]
Since
\[
p_\beta(k)\asymp_\beta \langle k\rangle^{-1},
\]
the operator
\[
P_\beta^{1/2}:H^{-1/2}(\mathbb T)\longrightarrow L^2(\mathbb T)
\]
is an isomorphism. Moreover, by the definition of the energy inner
product on \(\mathcal H\),
\[
\|W\|_{L^2\times L^2}^2
=
\|h\|_{L^2}^2
+
\|P_\beta^{1/2}u\|_{L^2}^2
=
\|Z\|_{\mathcal H}^2.
\]
Similarly,
\[
\|G\|_{L^2\times L^2}^2
=
\|f\|_{L^2}^2
+
\|P_\beta^{1/2}g\|_{L^2}^2
=
\|F\|_{\mathcal H}^2.
\]
Thus the transformation
\[
(h,u)
\longmapsto
(h,P_\beta^{1/2}u)
\]
identifies \(\mathcal H\) isometrically with
\(L^2(\mathbb T)\times L^2(\mathbb T)\).

\subsubsection*{\textbf{Step 2.} A stationary scalar resolvent estimate}

The key ingredient in the high-frequency analysis is the following
stationary estimate.

\begin{lemma}
Assume that $a\in C^\infty(\mathbb T)$, $a\ge0$ and $a\not\equiv0$. There exist constants \(C>0\) and \(\tau_0>0\) such that
\begin{equation}\label{eq:scalar-stationary-resolvent}
\|w\|_{L^2(\mathbb T)}
\le
C
\left\|
\left(
L_\beta-\tau^2+i\tau a^2
\right)w
\right\|_{L^2(\mathbb T)}
\end{equation}
for every \(|\tau|\ge\tau_0\) and every
\(w\in H^1(\mathbb T)\). Moreover,
\begin{equation}\label{eq:scalar-stationary-K-resolvent}
\left\|
\left(
L_\beta-\tau^2+i\tau a^2
\right)^{-1}
K_\beta
\right\|_{\mathcal L(L^2(\mathbb T))}
\le
C|\tau|,
\end{equation}
where the operator in
\eqref{eq:scalar-stationary-K-resolvent}, initially defined on
\(D(K_\beta)\), is understood through its unique bounded extension to
\(L^2(\mathbb T)\).
\end{lemma}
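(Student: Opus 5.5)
The plan is to deduce \eqref{eq:scalar-stationary-resolvent} from the high-frequency stationary estimate of Alazard, Marzuola and Wang \cite{AlazardMarzuolaWang} for $|D|-\sigma^2-i\sigma\chi$ on $\mathbb T$. The difference between $\beta L_\beta$ and $|D|$ will be absorbed directly, using frequency localization. The $K_\beta$-weighted bound \eqref{eq:scalar-stationary-K-resolvent} will then follow from an energy identity and a duality argument.

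\textbf{Rescaling.} Write $\beta L_\beta=|D|+R_\beta$, where $R_\beta$ has symbol
\[
r_\beta(k)=|k|\bigl(\tanh(\beta|k|)-1\bigr),
\]
which decays exponentially in $|k|$. Set $\sigma=\sqrt\beta\,|\tau|$ and $\chi=\sqrt\beta\,a^2\ge0$, so that $\chi\not\equiv0$ and $\chi\in C^\infty$. For $\tau>0$ this gives
\[
\beta\bigl(L_\beta-\tau^2+i\tau a^2\bigr)=|D|-\sigma^2+i\sigma\chi+R_\beta .
\]
For $\tau<0$ I will take complex conjugates. Since $L_\beta$ has a real even symbol, it commutes with conjugation, so the two signs of the damping term are exchanged and both sign conventions are covered. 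The cited AMW estimate then gives, for $\sigma\ge\sigma_0$,
\[
\|w\|\le C\bigl\|(|D|-\sigma^2+i\sigma\chi)w\bigr\|.
\]

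\textbf{Absorbing $R_\beta$.} Let $Q=L_\beta-\tau^2+i\tau a^2$ and $g=Qw$, and let $\Pi_{\mathrm{low}}$ be the Fourier projection onto $|k|\le\sigma^2/2$.

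1. Taking the imaginary part of $\langle g,w\rangle$ gives
\[
|\tau|\,\|a w\|^2\le\|g\|\,\|w\|.
\]
2. On the low frequencies, $\bigl|\beta^{-1}|k|-\tau^2\bigr|\gtrsim\tau^2$ up to the bounded $R_\beta$ error. Hence
\[
\|\Pi_{\mathrm{low}}w\|\lesssim\tau^{-2}\bigl(\|g\|+|\tau|\,\|a\|_\infty\|aw\|+\|w\|\bigr).
\]
Using step 1, this yields
\[
\|\Pi_{\mathrm{low}}w\|\lesssim\tau^{-2}\|g\|+|\tau|^{-1/2}\|g\|^{1/2}\|w\|^{1/2}+\tau^{-2}\|w\|.
\]
3. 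On the high frequencies the multiplier is tiny, so
\[
\|R_\beta w\|\le \sup_{|k|>\sigma^2/2}|r_\beta(k)|\,\|w\|+C\|\Pi_{\mathrm{low}}w\|=o(1)\|w\|+C\|\Pi_{\mathrm{low}}w\|.
\]
4. Inserting this into the AMW bound and using Young's inequality, the $\|w\|$ terms are absorbed for $|\tau|\ge\tau_0$. This gives \eqref{eq:scalar-stationary-resolvent}.

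The constant $C$ in \eqref{eq:scalar-stationary-resolvent} is $\sqrt\beta$ times the AMW constant, up to the absorption losses. Since $Q$ is a compact (order-zero) perturbation of the Fredholm operator $L_\beta-\tau^2$ of index zero, the estimate also gives invertibility of $Q:H^1\to L^2$.

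\textbf{The $K_\beta$ bound.} For $w\in H^1$ we have
\[
\|K_\beta w\|^2=\langle L_\beta w,w\rangle=\operatorname{Re}\langle Qw,w\rangle+\tau^2\|w\|^2 .
\]
Together with \eqref{eq:scalar-stationary-resolvent}, this gives $\|K_\beta Q^{-1}\|\le C|\tau|$. The adjoint is
\[
Q^*=L_\beta-\tau^2-i\tau a^2 ,
\]
which is the same operator with $\tau\mapsto-\tau$, so the same bound holds for $K_\beta(Q^*)^{-1}$. Because $K_\beta$ is self-adjoint, taking adjoints gives $\|Q^{-1}K_\beta\|\le C|\tau|$ on $D(K_\beta)$, and this extends by density.

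\textbf{Main obstacle.} The only delicate point is the exact form and sign convention of the AMW estimate, namely the $\sigma$ versus $\sigma^2$ scaling and the sign of the damping term. I handle these by the rescaling above and the conjugation symmetry.
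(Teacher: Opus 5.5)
Your proof is correct. Most of it follows the paper: the same rescaling $\sigma=\sqrt\beta\,\tau$, $\chi=\sqrt\beta\,a^2$, the same black-box use of the Alazard--Marzuola--Wang estimate, a Fredholm index-zero argument for invertibility, and the same energy identity plus duality for the $K_\beta$ bound. (You derive that bound for $Q$ and transfer it to $Q^*$ via $\tau\mapsto-\tau$; the paper works with $\mathcal P_\tau^*$ directly.)

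The genuine difference is how you absorb $R_\beta$. The paper argues by contradiction. It takes a normalized sequence $w_n$ with $(Q_{\sigma_n}+R_\beta)w_n\to0$ and shows $w_n\rightharpoonup0$ by testing against smooth $\varphi$ and dividing by $\sigma_n^2$. Compactness of $R_\beta$ then gives $R_\beta w_n\to0$ strongly, which contradicts the AMW bound.

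You instead split frequencies at $|k|=\beta\tau^2/2$. At high frequency $R_\beta$ is small because $r_\beta(k)\to0$; at low frequency $w$ is small because $\ell_\beta(k)-\tau^2$ is elliptic there. This is the quantitative form of the same mechanism: the paper's division by $\sigma_n^2$ is your low-frequency ellipticity, and compactness of a Fourier multiplier is exactly decay of its symbol.

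What your route buys is an explicit threshold $\tau_0$ and explicit constants, in terms of the AMW constant, $\|a\|_\infty$, $\|r_\beta\|_\infty$ and the tail $\sup_{|k|>\beta\tau^2/2}|r_\beta(k)|$. The paper's argument is non-constructive, but it is shorter and applies verbatim to any $L^2$-compact perturbation.

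Two small remarks on your absorption step. First, your Step 1 is unnecessary. On the low block $\ell_\beta(k)\le |k|/\beta\le \tau^2/2$ exactly, so there is no $R_\beta$ error there, and directly
\[
\|\Pi_{\mathrm{low}}w\|\le 2\tau^{-2}\|g\|+2\|a\|_\infty^2\,|\tau|^{-1}\|w\|,
\]
which is already absorbable for large $|\tau|$. Second, the constant in front of $\|g\|$ picks up a factor $\beta$, not $\sqrt\beta$. Neither point affects the conclusion.
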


\begin{proof}
Set
\begin{equation}\label{p_tau}
\mathcal P_\tau
=
L_\beta-\tau^2+i\tau a^2.
\end{equation}
The symbol of \(L_\beta\) is
\[
\ell_\beta(k)
=
\frac{k\tanh(\beta k)}{\beta}
=
\frac{|k|\tanh(\beta|k|)}{\beta}.
\]
Consequently,
\[
\beta L_\beta
=
|D|+R_\beta,
\]
where \(R_\beta\) is the Fourier multiplier with symbol
\[
r_\beta(k)
=
|k|
\left(
\tanh(\beta|k|)-1
\right).
\]
We have that
\[
\tanh y-1
=
O(e^{-2y}),
\qquad
y\to+\infty,
\]
thus
\[
r_\beta(k)
=
O_\beta
\left(
|k|e^{-2\beta|k|}
\right).
\]
Since \(r_\beta(k)\to0\) exponentially as \(|k|\to\infty\),
\(R_\beta\) is compact on \(L^2(\mathbb T)\). In particular, its
restriction defines a compact operator
\[
R_\beta:H^1(\mathbb T)\longrightarrow L^2(\mathbb T).
\]

Multiplying \eqref{p_tau} by \(\beta\), and setting
\[
\sigma=\sqrt{\beta}\,\tau,
\qquad
\chi_a=\sqrt{\beta}\,a^2,
\]
we obtain
\begin{equation}\label{eq:scalar-resolvent-reduction-D}
\beta\mathcal P_\tau
=
|D|-\sigma^2+i\sigma\chi_a+R_\beta.
\end{equation}
Consider the unperturbed stationary operator
\[
Q_\sigma
=
|D|-\sigma^2+i\sigma\chi_a.
\]
By hypothesis, 
\[
\chi_a\in C^\infty(\mathbb T),
\qquad
\chi_a\ge0,
\qquad
\chi_a\not\equiv0,
\]
therefore the high-frequency resolvent estimate in
\cite[Theorem~2]{AlazardMarzuolaWang} applies to \(Q_\sigma\), after a
change of sign in the spectral parameter.\footnote{More precisely,
Alazard, Marzuola, and Wang consider the stationary operator
\[
P(\sigma)=|D|-\sigma^2-i\sigma\chi
\]
and prove, for \(|\sigma|\) sufficiently large,
\[
\|P(\sigma)^{-1}\|_{\mathcal L(L^2(\mathbb T))}\le C.
\]
In the present setting,
\[
Q_\sigma=|D|-\sigma^2+i\sigma\chi_a
       =P(-\sigma)
\]
with \(\chi=\chi_a=\sqrt{\beta}\,a^2\) and
\(\sigma=\sqrt{\beta}\,\tau\). Thus their estimate applies directly to
\(Q_\sigma\).}
Hence there exist constants \(C>0\) and \(\sigma_0>0\) such that
\begin{equation}\label{eq:AMW-resolvent}
\|Q_\sigma^{-1}\|_{\mathcal L(L^2(\mathbb T))}
\le
C,
\qquad
|\sigma|\ge\sigma_0.
\end{equation}

We next show that the smoothing perturbation \(R_\beta\) preserves this
uniform estimate for sufficiently large \(|\sigma|\). Suppose, by
contradiction, that this is false. Then there exists a sequence
\[
|\sigma_n|\longrightarrow\infty
\]
and functions \(w_n\in H^1(\mathbb T)\) such that
\begin{equation}\label{eq:compact-perturbation-contradiction}
\|w_n\|_{L^2(\mathbb T)}
=
1
\end{equation}
and
\begin{equation}\label{eq:compact-perturbation-residual}
(Q_{\sigma_n}+R_\beta)w_n
=
f_n,
\qquad
f_n\longrightarrow0
\quad\text{in }L^2(\mathbb T).
\end{equation}
We first prove that
\begin{equation}\label{eq:wn-weak-zero}
w_n\rightharpoonup0
\qquad
\text{weakly in }L^2(\mathbb T).
\end{equation}
Let \(\varphi\in C^\infty(\mathbb T)\). Taking the
\(L^2\)-inner product of
\eqref{eq:compact-perturbation-residual} with \(\varphi\), we obtain
\[
-\sigma_n^2
\langle w_n,\varphi\rangle_{L^2}
=
\langle f_n,\varphi\rangle_{L^2}
-
\langle w_n,|D|\varphi\rangle_{L^2}
-
i\sigma_n
\langle w_n,\chi_a\varphi\rangle_{L^2}
-
\langle w_n,R_\beta^*\varphi\rangle_{L^2}.
\]
Hence
\begin{align*}
|\langle w_n,\varphi\rangle_{L^2}|
\le
\frac{
\|f_n\|_{L^2}\|\varphi\|_{L^2}
}{\sigma_n^2}
+
\frac{
\||D|\varphi\|_{L^2}
}{\sigma_n^2}
+
\frac{
\|\chi_a\varphi\|_{L^2}
}{|\sigma_n|}
+
\frac{
\|R_\beta^*\varphi\|_{L^2}
}{\sigma_n^2}.
\end{align*}
Since \(|\sigma_n|\to\infty\), the right-hand side tends to zero.
Therefore,
\[
\langle w_n,\varphi\rangle_{L^2}
\longrightarrow0
\]
for every \(\varphi\in C^\infty(\mathbb T)\). Since
\(\{w_n\}\) is bounded in \(L^2(\mathbb T)\) and
\(C^\infty(\mathbb T)\) is dense in \(L^2(\mathbb T)\),
\eqref{eq:wn-weak-zero} follows. We now use the fact that \(R_\beta\) is compact on \(L^2(\mathbb T)\),
\eqref{eq:wn-weak-zero} to ensure that
\begin{equation}\label{eq:Rbeta-wn-strong-zero}
R_\beta w_n
\longrightarrow0
\qquad
\text{strongly in }L^2(\mathbb T).
\end{equation}
From \eqref{eq:compact-perturbation-residual},
\[
Q_{\sigma_n}w_n
=
f_n-R_\beta w_n.
\]
Applying \eqref{eq:AMW-resolvent}, we obtain
\[
\|w_n\|_{L^2}
\le
C
\left(
\|f_n\|_{L^2}
+
\|R_\beta w_n\|_{L^2}
\right)
\longrightarrow0,
\]
thanks to \eqref{eq:Rbeta-wn-strong-zero}, contradicting \eqref{eq:compact-perturbation-contradiction}. 

Consequently, there exist constants \(C>0\) and \(\sigma_1>0\) such
that
\[
\|w\|_{L^2}
\le
C
\|
(Q_\sigma+R_\beta)w
\|_{L^2},
\qquad
|\sigma|\ge\sigma_1.
\]
By \eqref{eq:scalar-resolvent-reduction-D}, after adjusting the
constant, this gives
\eqref{eq:scalar-stationary-resolvent}.

For each fixed sufficiently large \(|\sigma|\), the resolvent estimate
\eqref{eq:AMW-resolvent} implies that
\[
Q_\sigma:H^1(\mathbb T)\longrightarrow L^2(\mathbb T)
\]
is an isomorphism, while
\[
R_\beta:
H^1(\mathbb T)
\longrightarrow
L^2(\mathbb T)
\]
is compact. Hence
\[
Q_\sigma+R_\beta:
H^1(\mathbb T)
\longrightarrow
L^2(\mathbb T)
\]
is Fredholm of index zero. The estimate above implies that its kernel
is trivial, and therefore it is surjective. Thus
\(Q_\sigma+R_\beta\), and consequently \(\mathcal P_\tau\), is
invertible for all sufficiently large \(|\tau|\).

We now prove \eqref{eq:scalar-stationary-K-resolvent}. From
\eqref{eq:scalar-stationary-resolvent},
\[
\|\mathcal P_\tau^{-1}\|_{\mathcal L(L^2)}
\le
C.
\]
Replacing \(\tau\) by \(-\tau\) gives the same estimate for
\[
\mathcal P_\tau^*
=
L_\beta-\tau^2-i\tau a^2.
\]
Let
\[
w
=
(\mathcal P_\tau^*)^{-1}\varphi,
\qquad
\varphi\in L^2(\mathbb T).
\]
Then
\[
\mathcal P_\tau^*w=\varphi.
\]
Taking the real part of the \(L^2\)-inner product with \(w\), and using
\[
L_\beta=K_\beta^2,
\]
we obtain
\[
\|K_\beta w\|_{L^2}^2
-
\tau^2\|w\|_{L^2}^2
=
\operatorname{Re}
\langle\varphi,w\rangle_{L^2}.
\]
Therefore,
\[
\|K_\beta w\|_{L^2}^2
\le
\tau^2\|w\|_{L^2}^2
+
\|\varphi\|_{L^2}\|w\|_{L^2}.
\]
Since
\[
\|w\|_{L^2}
\le
C\|\varphi\|_{L^2},
\]
we obtain, for \(|\tau|\ge1\),
\begin{equation}\label{eq:K-adjoint-resolvent}
\left\|
K_\beta
(\mathcal P_\tau^*)^{-1}\varphi
\right\|_{L^2}
\le
C|\tau|
\|\varphi\|_{L^2}.
\end{equation}
The operator \(K_\beta\) is self-adjoint and the operator
\(K_\beta(\mathcal P_\tau^*)^{-1}\) is bounded on \(L^2(\mathbb T)\), thus
we have
\[
\left(
K_\beta(\mathcal P_\tau^*)^{-1}
\right)^*
=
\mathcal P_\tau^{-1}K_\beta
\]
on \(D(K_\beta)\). Hence
\(\mathcal P_\tau^{-1}K_\beta\), initially defined on
\(D(K_\beta)\), extends uniquely to a bounded operator on
\(L^2(\mathbb T)\), and
\[
\|\mathcal P_\tau^{-1}K_\beta\|_{\mathcal L(L^2)}
=
\left\|
K_\beta
(\mathcal P_\tau^*)^{-1}
\right\|_{\mathcal L(L^2)}
\le
C|\tau|,
\]
where we have used \eqref{eq:K-adjoint-resolvent}. So, this proves \eqref{eq:scalar-stationary-K-resolvent}.
\end{proof}

\subsubsection*{\textbf{Step 3.} A high-frequency estimate for the second component}

The second damping term enters the transformed system through the
lower-order operator
\[
C_b
=
P_\beta^{1/2}b^2P_\beta^{1/2}.
\]
The following estimate provides the additional high-frequency gain
needed to control this contribution.

\begin{lemma}\label{lem:P-half-v-high-frequency}
There exist constants \(C>0\) and \(\tau_0>0\) such that every solution
of
\eqref{eq:L2-resolvent-first}--\eqref{eq:L2-resolvent-second}
satisfies
\begin{equation}\label{eq:P-half-v-estimate}
\|P_\beta^{1/2}v\|_{L^2(\mathbb T)}
\le
\frac{C}{|\tau|}
\left(
\|W\|_{L^2\times L^2}
+
\|G\|_{L^2\times L^2}
\right)
\end{equation}
whenever \(|\tau|\ge\tau_0\).
\end{lemma}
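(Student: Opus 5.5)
Solve the first equation \eqref{eq:L2-resolvent-first} for the high-frequency part of \(K_\beta h\), using the second equation. Concretely, apply \(P_\beta^{1/2}\) to \eqref{eq:L2-resolvent-second}:
\[
i\tau P_\beta^{1/2}v=P_\beta^{1/2}q-iP_\beta^{1/2}K_\beta h-P_\beta^{1/2}C_bv .
\]
Dividing by \(|\tau|\) gives
\[
\|P_\beta^{1/2}v\|_{L^2}\le\frac1{|\tau|}\Bigl(\|P_\beta^{1/2}q\|_{L^2}+\|P_\beta^{1/2}K_\beta h\|_{L^2}+\|P_\beta^{1/2}C_bv\|_{L^2}\Bigr).
\]
The first and third terms are immediate: \(P_\beta^{1/2}\) is bounded on \(L^2\) (its symbol is bounded by \(1\)), so \(\|P_\beta^{1/2}q\|\le\|q\|\le\|G\|\). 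Since \(C_b\) is bounded on \(L^2\), we also get \(\|P_\beta^{1/2}C_bv\|\lesssim\|v\|\le\|W\|\).

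The key term is \(P_\beta^{1/2}K_\beta h\). Its symbol is
\[
\sqrt{p_\beta(k)}\,k\sqrt{p_\beta(k)}=kp_\beta(k)=m_\beta(k),
\]
which lies in \(\ell^\infty\). In other words, \(P_\beta^{1/2}K_\beta=iT_\beta\) up to sign conventions, since \(\widehat{T_\beta w}(k)=-im_\beta(k)\widehat w(k)\). This is a bounded operator on \(L^2\) with norm \(\|m_\beta\|_{\ell^\infty}\le\beta^{-1}\). Hence \(\|P_\beta^{1/2}K_\beta h\|_{L^2}\le\beta^{-1}\|h\|_{L^2}\le\beta^{-1}\|W\|\).

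Collecting the three bounds yields
\[
\|P_\beta^{1/2}v\|_{L^2}\le\frac{C}{|\tau|}\bigl(\|W\|_{L^2\times L^2}+\|G\|_{L^2\times L^2}\bigr)
\]
for all \(\tau\ne0\), so any \(\tau_0>0\) works.

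The only point needing care is regularity: applying \(P_\beta^{1/2}\) to the equation is legitimate because \(v=P_\beta^{1/2}u\in H^{1/2}\) for \(Z\in D(\mathcal A_d)\), and \(h\in H^{1/2}=D(K_\beta)\). Every term of \eqref{eq:L2-resolvent-second} therefore lies in \(L^2\), and the symbol identity is justified mode by mode in Fourier variables. I do not expect a genuine obstacle here. The main thing to check is the symbol computation \(\sqrt{p_\beta}\,\kappa_\beta=m_\beta\), which is what gives the gain of one full power of \(|\tau|\) without invoking the stationary estimate of Step 2.
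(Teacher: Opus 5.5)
Your argument is correct, and it takes a different and simpler route than the paper's.

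\textbf{The paper's route.} The paper splits $W$ with the spectral projector $\Pi_\tau=\mathbf 1_{\{|K_\beta|\le|\tau|/2\}}$. On the range of $\Pi_\tau$ it uses both equations and inverts the matrix multiplier $\begin{pmatrix}\tau&\kappa_\beta\\ \kappa_\beta&\tau\end{pmatrix}$. Its eigenvalues $\tau\pm\kappa_\beta(k)$ have modulus at least $|\tau|/2$ there, which gives $\|\Pi_\tau W\|\lesssim|\tau|^{-1}(\|W\|+\|G\|)$. On the complement, $|\kappa_\beta(k)|>|\tau|/2$ forces $|k|>\beta\tau^2/4$, and there $p_\beta(k)^{1/2}\lesssim|\tau|^{-1}$.

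\textbf{Your route.} You avoid the splitting altogether. Since $P_\beta^{1/2}K_\beta$ has the bounded symbol $kp_\beta(k)=m_\beta(k)$, in fact $P_\beta^{1/2}K_\beta=iT_\beta$ exactly, with no sign ambiguity. The paper itself only uses this observation later, in \eqref{eq:K-Cb-estimate}. Applying $P_\beta^{1/2}$ to the second equation then leaves $i\tau P_\beta^{1/2}v$ as the only term not bounded uniformly by $\|W\|+\|G\|$. This uses only the second equation, holds for every $\tau\neq0$, and does not depend on $a$. It also gives the explicit constant $C=1+\beta^{-1}+\|b\|_{L^\infty}^2$, using $\|P_\beta^{1/2}\|_{\mathcal L(L^2)}=1$.

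\textbf{What each buys.} The paper's decomposition proves slightly more: the whole low-frequency part $\Pi_\tau W$, including $h$, is $O(|\tau|^{-1})$. That extra information is never used in Step 4, so your argument fully suffices for the lemma and its application.

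\textbf{Cosmetic point.} Your opening sentence, about solving the first equation for the high-frequency part of $K_\beta h$, does not describe the argument you actually give. That argument uses neither the first equation nor any frequency decomposition, so you should drop the sentence.
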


\begin{proof}
Let
\[
\Pi_\tau
=
\mathbf 1_{\{|K_\beta|\le|\tau|/2\}}
\]
be the spectral projector of \(K_\beta\). We let \(\Pi_\tau\) act
componentwise on \(L^2(\mathbb T)\times L^2(\mathbb T)\), and write
\[
W
=
W_{\mathrm{low}}
+
W_{\mathrm{high}},
\qquad
W_{\mathrm{low}}
=
\Pi_\tau W.
\]

Applying \(\Pi_\tau\) to
\eqref{eq:L2-resolvent-first}--\eqref{eq:L2-resolvent-second}, and
using the commutation of \(\Pi_\tau\) with \(K_\beta\), we obtain
\[
i
\begin{pmatrix}
\tau & K_\beta
\\
K_\beta & \tau
\end{pmatrix}
W_{\mathrm{low}}
=
\Pi_\tau G
-
\Pi_\tau
\begin{pmatrix}
a^2&0
\\
0&C_b
\end{pmatrix}
W.
\]
On the range of \(\Pi_\tau\),
\[
|\kappa_\beta(k)|
\le
\frac{|\tau|}{2},
\]
and therefore
\[
|\tau\pm\kappa_\beta(k)|
\ge
\frac{|\tau|}{2}.
\]
It follows that
\[
\left\|
\begin{pmatrix}
\tau & K_\beta
\\
K_\beta & \tau
\end{pmatrix}^{-1}
\Pi_\tau
\right\|_{\mathcal L(L^2\times L^2)}
\le
\frac{C}{|\tau|}.
\]
Since multiplication by \(a^2\) and the operator \(C_b\) are bounded
on \(L^2(\mathbb T)\), we obtain
\begin{equation}\label{eq:low-frequency-W}
\|W_{\mathrm{low}}\|_{L^2\times L^2}
\le
\frac{C}{|\tau|}
\left(
\|G\|_{L^2\times L^2}
+
\|W\|_{L^2\times L^2}
\right).
\end{equation}
Moreover, since \(P_\beta^{1/2}\) is bounded on \(L^2(\mathbb T)\),
\eqref{eq:low-frequency-W} gives
\begin{equation}\label{eq:P-half-v-low}
\|P_\beta^{1/2}\Pi_\tau v\|_{L^2}
\le
\frac{C}{|\tau|}
\left(
\|G\|_{L^2\times L^2}
+
\|W\|_{L^2\times L^2}
\right).
\end{equation}

It remains to estimate the complementary component. For \(k\neq0\),
\[
|\kappa_\beta(k)|^2
=
\frac{|k|\tanh(\beta|k|)}{\beta}
\le
\frac{|k|}{\beta}.
\]
On the support of \(I-\Pi_\tau\),
\[
|\kappa_\beta(k)|
>
\frac{|\tau|}{2},
\]
and hence
\[
|k|
>
\frac{\beta}{4}|\tau|^2.
\]
Moreover,
\[
p_\beta(k)^{1/2}
=
\left(
\frac{\tanh(\beta|k|)}{\beta|k|}
\right)^{1/2}
\le
\frac{1}{\sqrt{\beta|k|}}.
\]
Therefore, on the support of \(I-\Pi_\tau\),
\[
p_\beta(k)^{1/2}
\le
\frac{C_\beta}{|\tau|}.
\]
Consequently,
\begin{equation}\label{eq:P-half-v-high}
\left\|
P_\beta^{1/2}(I-\Pi_\tau)v
\right\|_{L^2}
\le
\frac{C}{|\tau|}
\|v\|_{L^2}.
\end{equation}

Combining
\eqref{eq:P-half-v-low} and
\eqref{eq:P-half-v-high}, we obtain
\[
\|P_\beta^{1/2}v\|_{L^2}
\le
\frac{C}{|\tau|}
\left(
\|W\|_{L^2\times L^2}
+
\|G\|_{L^2\times L^2}
\right),
\]
which proves \eqref{eq:P-half-v-estimate}.
\end{proof}

\subsubsection*{\textbf{Step 4.} The optimal resolvent upper bound}

We can now prove the main high-frequency resolvent estimate.

\begin{theorem}\label{thm:linear-resolvent-upper-bound}
There exist constants \(C>0\) and \(\tau_0>0\) such that
\begin{equation}\label{eq:linear-resolvent-upper-bound}
\left\|
(i\tau I-\mathcal A_d)^{-1}
\right\|_{\mathcal L(\mathcal H)}
\le
C|\tau|,
\qquad
|\tau|\ge\tau_0.
\end{equation}
\end{theorem}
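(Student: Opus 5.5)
We work in the transformed $L^2\times L^2$ formulation of Step~1: since $(h,u)\mapsto(h,P_\beta^{1/2}u)$ is an isometry of $\mathcal H$ onto $L^2\times L^2$, it suffices to show that every solution $W=(h,v)$ of \eqref{eq:L2-resolvent-first}--\eqref{eq:L2-resolvent-second} with data $G=(f,q)$ satisfies $\|W\|\le C|\tau|\|G\|$ for $|\tau|\ge\tau_0$. By Corollary~\ref{cor:imaginary-axis-resolvent-damped} we already know $i\tau\in\rho(\mathcal A_d)$, so only this a priori bound is needed. The plan has three steps: eliminate $v$ to get a scalar equation for $h$ governed by $\mathcal P_\tau=L_\beta-\tau^2+i\tau a^2$; control $h$ via the stationary lemma of Step~2; then control $v$ by an equipartition identity.

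\emph{Elimination.} From \eqref{eq:L2-resolvent-second}, $i\tau v=q-iK_\beta h-C_bv$. Multiplying \eqref{eq:L2-resolvent-first} by $i\tau$, substituting $i\tau K_\beta v=K_\beta q-iL_\beta h-K_\beta C_bv$ and using $L_\beta=K_\beta^2$ should give
\[
\mathcal P_\tau h=i\tau f-iK_\beta q+iK_\beta C_b v .
\]
This is first done for smooth data and then extended by density. By \eqref{eq:scalar-stationary-resolvent} and \eqref{eq:scalar-stationary-K-resolvent}, the first two terms contribute at most $C|\tau|(\|f\|+\|q\|)$ to $\|h\|$.

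\emph{The damping term, which I expect to be the main obstacle.} Estimating $\mathcal P_\tau^{-1}K_\beta C_bv$ crudely by $C|\tau|\,\|C_bv\|$ and then invoking Lemma~\ref{lem:P-half-v-high-frequency} produces $C(\|W\|+\|G\|)$, with an $O(1)$ constant in front of $\|W\|$ that cannot be absorbed. Instead I will write $C_bv=P_\beta^{1/2}b\,(bP_\beta^{1/2}v)$ and use that $P_\beta^{1/2}K_\beta$ has symbol $kp_\beta(k)=m_\beta(k)$, hence is bounded. Repeating the duality argument of Step~2, $\mathcal P_\tau^{-1}K_\beta P_\beta^{1/2}$ is the adjoint of $P_\beta^{1/2}K_\beta(\mathcal P_\tau^*)^{-1}$, whose norm is $\le \|m_\beta\|_\infty\|(\mathcal P_\tau^*)^{-1}\|\le C$, uniformly in $\tau$. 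Hence this term is bounded by $C\|bP_\beta^{1/2}v\|_{L^2}$. The dissipation identity, namely the real part of the inner product of the system with $W$, gives $\|ah\|^2+\|bP_\beta^{1/2}v\|^2=\operatorname{Re}\langle G,W\rangle$. Therefore the term is at most $C(\|G\|\|W\|)^{1/2}\le\varepsilon\|W\|+C_\varepsilon\|G\|$. Altogether,
\[
\|h\|\le C|\tau|\|G\|+\varepsilon\|W\| .
\]
If this duality step turns out to be delicate, the fallback is to use Lemma~\ref{lem:P-half-v-high-frequency} combined with an interpolation against the dissipation bound.

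\emph{Control of $v$ and conclusion.} Taking the imaginary parts of the inner products of \eqref{eq:L2-resolvent-first} with $h$ and of \eqref{eq:L2-resolvent-second} with $v$ gives two identities. Because $K_\beta$ is self-adjoint, $\operatorname{Re}\langle K_\beta v,h\rangle=\operatorname{Re}\langle K_\beta h,v\rangle$, and $C_b$ is self-adjoint so $\langle C_bv,v\rangle$ is real. Subtracting therefore yields
\[
\tau\bigl(\|v\|^2-\|h\|^2\bigr)=\operatorname{Im}\langle q,v\rangle-\operatorname{Im}\langle f,h\rangle,
\]
so $\|v\|^2\le\|h\|^2+|\tau|^{-1}\|G\|\|W\|$. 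Combining this with the bound on $h$, choosing $\varepsilon$ small, and absorbing the $\|W\|$ terms, we get $\|W\|\le C|\tau|\|G\|$ for $|\tau|\ge\tau_0$. This is \eqref{eq:linear-resolvent-upper-bound}.
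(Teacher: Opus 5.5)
Your proof is correct. Its skeleton is exactly the paper's: you eliminate $v$ to get $\mathcal P_\tau h=i\tau f-iK_\beta q+iK_\beta C_bv$, bound the first two terms with the two stationary estimates of Step~2, control $v$ through the identity $\tau(\|h\|^2-\|v\|^2)=\operatorname{Im}\langle f,h\rangle-\operatorname{Im}\langle q,v\rangle$, and absorb.

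The genuine difference is how you handle the damping term $iK_\beta C_bv$. Your diagnosis that the crude bound $C|\tau|\,\|C_bv\|$ loses is right, but the paper never uses it. The paper also observes that $K_\beta P_\beta^{1/2}$ has the bounded symbol $m_\beta$, which gives $\|K_\beta C_bv\|\le C\|P_\beta^{1/2}v\|$. It then invokes Lemma~\ref{lem:P-half-v-high-frequency}, which splits $W$ with the spectral projector $\mathbf 1_{\{|K_\beta|\le|\tau|/2\}}$. On low frequencies the conservative $2\times2$ symbol is inverted with norm $O(|\tau|^{-1})$, since $|\tau\pm\kappa_\beta(k)|\ge|\tau|/2$. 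On high frequencies, $|k|\gtrsim\tau^2$ forces $p_\beta(k)^{1/2}\lesssim|\tau|^{-1}$. This yields $\|h\|\le C|\tau|\|G\|+C|\tau|^{-1}\|W\|$.

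You instead keep one factor of $b$ outside and control $\|bP_\beta^{1/2}v\|$ by the dissipation identity $\|ah\|^2+\|bP_\beta^{1/2}v\|^2=\operatorname{Re}\langle G,W\rangle$ together with Young's inequality. This gives $\varepsilon\|W\|+C_\varepsilon\|G\|$, which is equally absorbable.

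What each route buys:
\begin{itemize}
\item Your argument is the standard energy trick. It is shorter and makes Lemma~\ref{lem:P-half-v-high-frequency} unnecessary. The price is that it uses the sign structure $\langle C_bv,v\rangle=\|bP_\beta^{1/2}v\|^2$.
\item The paper treats the $b$-term purely as a perturbation. Only the boundedness of $b^2$ and the order gap between $P_\beta^{1/2}$ and $K_\beta$ enter. It also produces a factor $|\tau|^{-1}$ that decays in $\tau$, rather than a fixed small $\varepsilon$.
\end{itemize}

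Your duality detour is not needed. $K_\beta P_\beta^{1/2}$ is already bounded on $L^2(\mathbb T)$, so $\mathcal P_\tau^{-1}K_\beta P_\beta^{1/2}$ is simply a composition of bounded operators, with norm at most $\|\mathcal P_\tau^{-1}\|\,\|m_\beta\|_{\ell^\infty}$.
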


\begin{proof}
By Corollary~\ref{cor:imaginary-axis-resolvent-damped},
\(i\mathbb R\subset\rho(\mathcal A_d)\). Thus, for
\(F=(f,g)^\top\in\mathcal H\), let
\[
Z=(h,u)^\top
=
(i\tau I-\mathcal A_d)^{-1}F
\in D(\mathcal A_d),
\]
and set
\[
v=P_\beta^{1/2}u,
\qquad
q=P_\beta^{1/2}g.
\]
We first derive a scalar equation for \(h\).

Multiplying \eqref{eq:L2-resolvent-first} by \(i\tau\), applying
\(iK_\beta\) to \eqref{eq:L2-resolvent-second}, and subtracting the
second resulting identity from the first, the terms containing
\(K_\beta v\) cancel. Since
\[
K_\beta^2=L_\beta,
\]
we obtain
\begin{equation}\label{eq:scalar-equation-h}
\left(
L_\beta-\tau^2+i\tau a^2
\right)h
=
i\tau f
-
iK_\beta q
+
iK_\beta C_bv.
\end{equation}
This identity is initially understood in \(H^{-1/2}(\mathbb T)\).
The term involving \(K_\beta q\) is handled through the bounded
extension of
\(\mathcal P_\tau^{-1}K_\beta\) established in
\eqref{eq:scalar-stationary-K-resolvent}.

Recall that
\[
C_b
=
P_\beta^{1/2}b^2P_\beta^{1/2}.
\]
Since \(K_\beta\) and \(P_\beta^{1/2}\) commute,
\[
K_\beta C_bv
=
K_\beta P_\beta^{1/2}
b^2P_\beta^{1/2}v.
\]
The operator \(K_\beta P_\beta^{1/2}\) is the Fourier multiplier with
symbol
\[
k p_\beta(k)
=
m_\beta(k)
=
\frac{\tanh(\beta k)}{\beta},
\]
which is bounded. Consequently,
\begin{equation}\label{eq:K-Cb-estimate}
\|K_\beta C_bv\|_{L^2}
\le
C
\|P_\beta^{1/2}v\|_{L^2}.
\end{equation}

Set
\[
\mathcal P_\tau
=
L_\beta-\tau^2+i\tau a^2.
\]
The term \(K_\beta q\) in \eqref{eq:scalar-equation-h} need not belong
to \(L^2(\mathbb T)\). We therefore interpret
\(\mathcal P_\tau^{-1}K_\beta q\) through the bounded extension
established in \eqref{eq:scalar-stationary-K-resolvent}. With this
interpretation, applying \(\mathcal P_\tau^{-1}\) to
\eqref{eq:scalar-equation-h} yields, in \(L^2(\mathbb T)\),
\[
h
=
i\tau\mathcal P_\tau^{-1}f
-
i\mathcal P_\tau^{-1}K_\beta q
+
i\mathcal P_\tau^{-1}K_\beta C_bv.
\]
Hence, by
\eqref{eq:scalar-stationary-resolvent},
\eqref{eq:scalar-stationary-K-resolvent}, and
\eqref{eq:K-Cb-estimate},
\begin{align}
\|h\|_{L^2}
&\le
C|\tau|\|f\|_{L^2}
+
C|\tau|\|q\|_{L^2}
+
C\|K_\beta C_bv\|_{L^2}
\notag\\
&\le
C|\tau|
\|G\|_{L^2\times L^2}
+
C\|P_\beta^{1/2}v\|_{L^2}.
\label{eq:h-upper-before-absorption}
\end{align}
By Lemma~\ref{lem:P-half-v-high-frequency},
\[
\|P_\beta^{1/2}v\|_{L^2}
\le
\frac{C}{|\tau|}
\left(
\|W\|_{L^2\times L^2}
+
\|G\|_{L^2\times L^2}
\right).
\]
Substituting this into
\eqref{eq:h-upper-before-absorption}, and enlarging the constant if
necessary, we obtain
\begin{equation}\label{eq:h-upper-with-W}
\|h\|_{L^2}
\le
C|\tau|
\|G\|_{L^2\times L^2}
+
\frac{C}{|\tau|}
\|W\|_{L^2\times L^2},
\qquad
|\tau|\ge\tau_0.
\end{equation}

By taking the \(L^2\)-inner product of
\eqref{eq:L2-resolvent-first} with \(h\), and of
\eqref{eq:L2-resolvent-second} with \(v\), taking imaginary parts,
and subtracting the resulting identities, we obtain
\[
\tau
\left(
\|h\|_{L^2}^2
-
\|v\|_{L^2}^2
\right)
=
\operatorname{Im}\langle f,h\rangle_{L^2}
-
\operatorname{Im}\langle q,v\rangle_{L^2}.
\]
Indeed, the damping terms give no contribution to the imaginary part,
once \(a^2\) and \(C_b\) are self-adjoint, while the terms involving
\(K_\beta\) cancel because \(K_\beta\) is self-adjoint. Hence
\[
|\tau|
\left|
\|h\|_{L^2}^2
-
\|v\|_{L^2}^2
\right|
\le
\|G\|_{L^2\times L^2}
\left(
\|h\|_{L^2}
+
\|v\|_{L^2}
\right).
\]
Therefore,
\[
\left|
\|h\|_{L^2}
-
\|v\|_{L^2}
\right|
\le
\frac{1}{|\tau|}
\|G\|_{L^2\times L^2},
\qquad
\tau\neq0,
\]
and consequently
\[
\|v\|_{L^2}
\le
\|h\|_{L^2}
+
\frac{1}{|\tau|}
\|G\|_{L^2\times L^2}.
\]
Thus,
\[
\|W\|_{L^2\times L^2}
\le
C\|h\|_{L^2}
+
\frac{C}{|\tau|}
\|G\|_{L^2\times L^2}.
\]
Combining this estimate with
\eqref{eq:h-upper-with-W}, we obtain
\[
\|W\|_{L^2\times L^2}
\le
C|\tau|
\|G\|_{L^2\times L^2}
+
\frac{C}{|\tau|}
\|W\|_{L^2\times L^2}.
\]
Choosing \(\tau_0\) sufficiently large so that
\[
\frac{C}{|\tau|}
\le
\frac12,
\qquad
|\tau|\ge\tau_0,
\]
the last term can be absorbed into the left-hand side. Hence
\[
\|W\|_{L^2\times L^2}
\le
C|\tau|
\|G\|_{L^2\times L^2},
\qquad
|\tau|\ge\tau_0.
\]

Finally, by the identities established in Step~1,
\[
\|W\|_{L^2\times L^2}
=
\|Z\|_{\mathcal H},
\qquad
\|G\|_{L^2\times L^2}
=
\|F\|_{\mathcal H}.
\]
Thus
\[
\|Z\|_{\mathcal H}
\le
C|\tau|
\|F\|_{\mathcal H}.
\]
Since
\[
Z
=
(i\tau I-\mathcal A_d)^{-1}F,
\]
we obtain
\eqref{eq:linear-resolvent-upper-bound}.
\end{proof}

Combining Theorem~\ref{thm:linear-resolvent-upper-bound} with the
quasimode lower bound, we obtain the sharpness of the polynomial
resolvent exponent.

\begin{corollary}
Assume, in addition, that there exists a nonempty open set $\mathcal O
\Subset
\mathbb T\setminus\operatorname{supp}(a)$.
Then the exponent \(1\) in the polynomial resolvent estimate is sharp.
More precisely,
\[
\left\|
(i\tau I-\mathcal A_d)^{-1}
\right\|_{\mathcal L(\mathcal H)}
=
O(|\tau|),
\qquad
\text{as }|\tau|\to\infty,
\]
while along the sequence \(\tau=\omega_N\),
\[
\left\|
(i\omega_NI-\mathcal A_d)^{-1}
\right\|_{\mathcal L(\mathcal H)}
\ge
c\,\omega_N.
\]
Consequently, no estimate of the form
\[
\left\|
(i\tau I-\mathcal A_d)^{-1}
\right\|_{\mathcal L(\mathcal H)}
=
O(|\tau|^\alpha),
\qquad
\text{as }|\tau|\to\infty
\]
can hold with \(\alpha<1\).
\end{corollary}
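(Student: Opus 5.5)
This corollary simply combines the two resolvent estimates already established. I will verify the three assertions in turn.

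\emph{Upper bound.} By Theorem~\ref{thm:linear-resolvent-upper-bound}, there are $C,\tau_0>0$ with $\|(i\tau I-\mathcal A_d)^{-1}\|_{\mathcal L(\mathcal H)}\le C|\tau|$ for $|\tau|\ge\tau_0$. This is exactly the statement $O(|\tau|)$ as $|\tau|\to\infty$. The extra localization hypothesis is not needed here.

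\emph{Lower bound.} Under the assumption $\mathcal O\Subset\mathbb T\setminus\operatorname{supp}(a)$, estimate \eqref{eq:resolvent-linear-lower-bound} of Theorem~\ref{thm:resolvent-not-uniformly-bounded} gives $\|(i\omega_NI-\mathcal A_d)^{-1}\|\ge c_\beta\,\omega_N$ for all sufficiently large $N$. Recall the ingredients behind it:
\begin{itemize}
\item the quasimodes $Z_N$ have $\mathcal H$-norm bounded below (Lemma~\ref{lem:quasimode-normalization});
\item their defect satisfies $\|(i\omega_NI-\mathcal A_d)Z_N\|_{\mathcal H}\lesssim N^{-1/2}$, by \eqref{eq:damped-quasimode-error};
\item the frequencies satisfy $\omega_N\sim\beta^{-1/2}N^{1/2}$.
\end{itemize}
So the lower bound is already available with $c=c_\beta$.

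\emph{Sharpness.} Suppose $\|(i\tau I-\mathcal A_d)^{-1}\|\le C'|\tau|^\alpha$ for $|\tau|\ge\tau_1$, with some $\alpha<1$. Since $\omega_N\to\infty$ by Proposition~\ref{prop:eigenvalue-properties}(v), we can evaluate this at $\tau=\omega_N$ for large $N$ and combine it with the lower bound:
\[
c\,\omega_N\le C'\omega_N^{\alpha},
\qquad\text{that is,}\qquad
\omega_N^{1-\alpha}\le C'/c .
\]
The left side tends to infinity as $N\to\infty$ because $1-\alpha>0$, which is a contradiction. Hence no estimate with exponent $\alpha<1$ can hold, and the exponent $1$ is sharp.

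There is no real obstacle in this argument. The only point to check is that the lower bound is stated along an unbounded sequence of positive frequencies, so the hypothetical bound with $\alpha<1$ can be tested on it. This holds because $\omega_N$ is strictly increasing and unbounded.
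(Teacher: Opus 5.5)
Your proposal is correct and follows the paper's own reasoning: the corollary comes from combining the upper bound of Theorem~\ref{thm:linear-resolvent-upper-bound} with the quasimode lower bound \eqref{eq:resolvent-linear-lower-bound}. The exclusion of any exponent $\alpha<1$ then follows, as you do, by testing a hypothetical $O(|\tau|^\alpha)$ bound along the unbounded sequence $\tau=\omega_N$.
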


Figure~\ref{fig:spectral-structure} summarizes the spectral
picture relevant to the preceding resolvent analysis. The colored
points in the open left half-plane are purely schematic: they indicate
only that
\[
\sigma(\mathcal A_d)\subset\{\operatorname{Re}z<0\}
\]
and are not intended to represent the actual location of the damped
spectrum or any uniform spectral gap. The open circles on the imaginary
axis indicate the frequencies used in the quasimode construction.
\begin{figure}[ht]
\centering
\begin{tikzpicture}[
    >=Stealth,
    scale=0.90,
    every node/.style={font=\small}
]


\begin{scope}[xshift=-3.8cm]

\draw[->,thin] (-1.8,0) -- (1.8,0)
    node[right] {$\operatorname{Re} z$};

\draw[->,thin] (0,-2.5) -- (0,2.5)
    node[above] {$\operatorname{Im} z$};

\fill (0,0) circle (1.3pt);

\foreach \y in {0.55,1.05,1.50,1.85,2.10}
{
    \fill (0,\y) circle (1.4pt);
    \fill (0,-\y) circle (1.4pt);
}

\node[right] at (0.10,0.55) {$i\omega_1$};
\node[right] at (0.10,2.10) {$i\omega_n$};

\node[font=\bfseries] at (0,3.35)
{Conservative $\mathcal{A}$};

\end{scope}


\begin{scope}[xshift=3.8cm]

\fill[gray!8] (-1.8,-2.5) rectangle (0,2.5);

\draw[->,thin] (-1.8,0) -- (1.8,0)
    node[right] {$\operatorname{Re} z$};

\draw[->,thin] (0,-2.5) -- (0,2.5)
    node[above] {$\operatorname{Im} z$};

\foreach \x/\y in {
    -1.10/0.55,
    -0.90/1.20,
    -0.70/1.85,
    -1.10/-0.60,
    -0.90/-1.30,
    -0.70/-1.95
}
{
    \fill[red!70!black] (\x,\y) circle (1.4pt);
}

\foreach \y in {0.75,1.40,2.05}
{
    \draw (0,\y) circle (2.1pt);
}

\node[right] at (0.10,2.05) {$i\omega_N$};

\node[font=\bfseries] at (0,3.35)
{Damped $\mathcal{A}_d$};

\end{scope}

\end{tikzpicture}

\caption{
Schematic spectral picture for the conservative and damped generators.
Left: $\sigma(\mathcal A)=\{0\}\cup\{\pm i\omega_n:n\in\mathbb N^*\}$,
with $\omega_n\sim\beta^{-1/2}n^{1/2}$.
Right: the colored points schematically indicate
$\sigma(\mathcal A_d)\subset\{\operatorname{Re}z<0\}$, while
$i\mathbb R\subset\rho(\mathcal A_d)$.
The open circles mark the quasimode frequencies $i\omega_N$.
}
\label{fig:spectral-structure}
\end{figure}

\section{Polynomial stabilization and optimal decay}
\label{sec:polynomial-stabilization}

We now combine the spectral and resolvent estimates obtained in the
previous sections to derive the decay rate of the damped linearized
Whitham--Boussinesq system. Recall that \(\mathcal A_d\) generates a contraction semigroup $\{S_d(t)\}_{t\ge0}$ on the Hilbert space \(\mathcal H\). Moreover,
Corollary~\ref{cor:imaginary-axis-resolvent-damped} gives $i\mathbb R \subset \rho(\mathcal A_d)$,  while Theorem~\ref{thm:linear-resolvent-upper-bound} shows that
\[
\left\|
(i\tau I-\mathcal A_d)^{-1}
\right\|_{\mathcal L(\mathcal H)}
=
O(|\tau|),
\qquad
\text{as }|\tau|\to\infty.
\]
We use the following characterization of polynomial stability.

\begin{theorem}[Borichev--Tomilov~\cite{BorichevTomilov2010}]
\label{thm:Borichev-Tomilov}
Let \(A\) be the generator of a bounded \(C_0\)-semigroup
\(\{S(t)\}_{t\ge0}\) on a Hilbert space \(H\), and assume that $i\mathbb R\subset\rho(A)$.
For every \(\alpha>0\), the following assertions are equivalent:
\[
\left\|
(isI-A)^{-1}
\right\|_{\mathcal L(H)}
=
O(|s|^\alpha),
\qquad
|s|\to\infty,
\]
and
\[
\left\|
S(t)A^{-1}
\right\|_{\mathcal L(H)}
=
O(t^{-1/\alpha}),
\qquad
t\to\infty.
\]
\end{theorem}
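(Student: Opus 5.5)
Although this statement is the classical theorem cited from \cite{BorichevTomilov2010}, I would prove its two implications separately. The implication from decay to resolvent growth is elementary; the converse is where the Hilbert-space structure is essential.

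\emph{Decay $\Rightarrow$ resolvent bound.} Set $m(t)=\|S(t)A^{-1}\|_{\mathcal L(H)}$ and let $M=\sup_{t\ge0}\|S(t)\|$. For $s\in\mathbb R$ and $t>0$, I would start from the identity
\[
(isI-A)^{-1}=\int_0^t e^{-isr}S(r)\,dr+e^{-ist}S(t)(isI-A)^{-1}.
\]
This follows by integrating $\frac{d}{dr}\bigl(e^{-isr}S(r)\bigr)$ against the resolvent. Next I would write
\[
S(t)(isI-A)^{-1}=S(t)A^{-1}\,A(isI-A)^{-1}=S(t)A^{-1}\bigl(is(isI-A)^{-1}-I\bigr).
\]
This yields
\[
\|(isI-A)^{-1}\|\le Mt+m(t)\bigl(|s|\,\|(isI-A)^{-1}\|+1\bigr).
\]
Since $m(t)=O(t^{-1/\alpha})$, one can choose $t=c|s|^{\alpha}$ with $m(t)|s|\le\frac12$. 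Absorbing the resolvent term on the left then gives $\|(isI-A)^{-1}\|=O(|s|^\alpha)$. This direction is valid on any Banach space.

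\emph{Resolvent bound $\Rightarrow$ decay.} I would proceed in three steps.

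\textbf{Step 1.} A Neumann series argument extends the bound $\|R(\lambda)\|\lesssim|\operatorname{Im}\lambda|^\alpha$ to the region $\{\operatorname{Re}\lambda>-c\,|\operatorname{Im}\lambda|^{-\alpha}\}$. Combined with the resolvent identity and the moment inequality for fractional powers of $-A$, this shows that $R(\lambda)(-A)^{-\alpha}$ is uniformly bounded on the closed right half-plane.

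\textbf{Step 2.} This is the heart of the matter. I would show $\|tS(t)(-A)^{-\alpha}\|=O(1)$. The tool is that $tS(t)x$ is the inverse Laplace transform of $R(\lambda)^2x$ on the line $\operatorname{Re}\lambda=\omega$, with $\omega\sim1/t$. Writing $R(\lambda)^2(-A)^{-\alpha}=R(\lambda)\cdot R(\lambda)(-A)^{-\alpha}$, Step 1 bounds one factor uniformly. For the remaining factor I would use the Hilbert-space Plancherel estimate
\[
\int_{\mathbb R}\|R(\omega+is)y\|^2\,ds\le \frac{C}{\omega}\|y\|^2,
\]
valid for bounded semigroups, together with its adjoint version. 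Pairing against $z\in H$ via Cauchy--Schwarz then bounds $|\langle tS(t)(-A)^{-\alpha}x,z\rangle|$ by $C e^{\omega t}\omega^{-1}\cdot\omega\,\|x\|\|z\|$, which is uniform once $\omega=1/t$.

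\textbf{Step 3.} The moment (interpolation) inequality for $S(t)(-A)^{-\gamma}$ in $\gamma$ transfers the $O(t^{-1})$ rate at order $\alpha$ into $O(t^{-1/\alpha})$ for $S(t)A^{-1}$.

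The main obstacle is Step 2, specifically obtaining the sharp power without the logarithmic loss that appears in the Banach-space (Batty--Duyckaerts) version. The Plancherel/duality argument is precisely where the Hilbert structure removes that loss. I would first try to run the argument with $\alpha$ replaced by the integer $\lceil\alpha\rceil$ to avoid fractional powers in the bookkeeping, and then interpolate.
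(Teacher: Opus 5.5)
The paper gives no proof of this statement. It is imported from Borichev--Tomilov and used as a black box with $\alpha=1$, in both directions, so your proposal has to stand on its own. Your direction ``decay $\Rightarrow$ resolvent bound'' is correct and complete, and works on any Banach space as you say. Step 1 is also fine, provided you run the moment inequality between $0$ and an integer $n\in[\alpha,\alpha+1]$. For larger $n$ it loses, since $\|R(\lambda)A^{-n}\|\approx\|A^{-n}\|/|\lambda|$ never decays faster than $|\lambda|^{-1}$.

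The genuine gap is in Step 2: a counting error that removes the decay. The Laplace transform of $tS(t)$ is $R(\lambda)^2$. Once one resolvent is absorbed into the bounded factor $R(\lambda)(-A)^{-\alpha}$, only one resolvent remains, so you cannot apply the Plancherel estimate to both $x$ and $z$.

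\begin{itemize}
\item If you bound that factor by $K$, you are left with $\int_{\mathbb R}\|R(\omega+is)^*z\|\,ds$. This is an $L^1$ norm that Plancherel does not control; it is in fact infinite for every $z\neq0$, because $e^{-\omega t}S(t)^*z\,\mathbf 1_{t>0}$ jumps at $t=0$.
\item If you instead pair as $\langle R(\lambda)(-A)^{-\alpha}x,R(\lambda)^*z\rangle$ and use Plancherel on both sides, you get $|\langle tS(t)(-A)^{-\alpha}x,z\rangle|\le Ce^{\omega t}\omega^{-1}\|x\|\|z\|$. At $\omega=1/t$ this gives only $\|S(t)(-A)^{-\alpha}\|=O(1)$.
\end{itemize}

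The extra factor $\omega$ in your bound has no source. The repair is to take one more derivative: $t^2S(t)$ has Laplace transform $2R(\lambda)^3$, and
\[
R(\lambda)^3(-A)^{-\alpha}=R(\lambda)\bigl[R(\lambda)(-A)^{-\alpha}\bigr]R(\lambda)
\]
sandwiches the bounded factor between two resolvents. Fourier inversion on $\operatorname{Re}\lambda=\omega$, Cauchy--Schwarz, and the Plancherel bounds for $S$ and $S^*$ then give
\[
t^2|\langle S(t)(-A)^{-\alpha}x,z\rangle|\le CKe^{\omega t}\omega^{-1}\|x\|\|z\|.
\]
With $\omega=1/t$ this is $\|S(t)(-A)^{-\alpha}\|=O(t^{-1})$. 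Alternatively, apply Plancherel to $e^{-\omega t}tS(t)(-A)^{-\alpha}x$, whose transform is bounded by $K\|R(\omega+is)x\|$, and conclude using $\|S(t)y\|\le M\|S(r)y\|$ for $r\in[t/2,t]$.

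Two smaller points. In Step 3 with $\alpha<1$, you must first use $S(t)(-A)^{-n\alpha}=\bigl(S(t/n)(-A)^{-\alpha}\bigr)^n$ before interpolating, since the moment inequality only moves downward in $\gamma$. Your fallback of replacing $\alpha$ by $\lceil\alpha\rceil$ cannot give the theorem. It only yields $\|S(t)A^{-1}\|=O(t^{-1/\lceil\alpha\rceil})$, which no interpolation improves to $t^{-1/\alpha}$. For the paper's case $\alpha=1$ this is moot.
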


We can now prove the main result of the paper.

\begin{proof}[Proof of Theorem~\ref{thm:main-polynomial-stabilization}]
The operator \(\mathcal A_d\) generates a contraction semigroup on
\(\mathcal H\). Hence
\[
\sup_{t\ge0}
\|S_d(t)\|_{\mathcal L(\mathcal H)}
\le1.
\]
Furthermore,
Corollary~\ref{cor:imaginary-axis-resolvent-damped} yields $i\mathbb R\subset \rho(\mathcal A_d)$.  In particular, \(0\in\rho(\mathcal A_d)\), and therefore
\[
\mathcal A_d^{-1}\in\mathcal L(\mathcal H).
\]
By Theorem~\ref{thm:linear-resolvent-upper-bound},
\[
\left\|
(i\tau I-\mathcal A_d)^{-1}
\right\|_{\mathcal L(\mathcal H)}
\le
C|\tau|
\]
for all sufficiently large \(|\tau|\). Thus the resolvent condition in
Theorem~\ref{thm:Borichev-Tomilov} holds with \(\alpha=1\). Therefore,
\[
\left\|
S_d(t)\mathcal A_d^{-1}
\right\|_{\mathcal L(\mathcal H)}
\le
\frac{C}{t},
\qquad
t\ge1,
\]
giving \eqref{eq:semigroup-polynomial-decay}.

Let now \(Z_0\in D(\mathcal A_d)\). Since
\[
Z_0
=
\mathcal A_d^{-1}\mathcal A_dZ_0,
\]
we obtain
\begin{align*}
\|S_d(t)Z_0\|_{\mathcal H}
\le
\left\|
S_d(t)\mathcal A_d^{-1}
\right\|_{\mathcal L(\mathcal H)}
\|\mathcal A_dZ_0\|_{\mathcal H}
\le
\frac{C}{t}
\|\mathcal A_dZ_0\|_{\mathcal H}
\le
\frac{C}{t}
\|Z_0\|_{D(\mathcal A_d)},
\qquad
t\ge1,
\end{align*}
showing \eqref{eq:state-polynomial-decay} and \eqref{eq:state-polynomial-domain}.

By the definition of the energy inner product on \(\mathcal H\),
\[
E(t)
=
\frac12
\|h(t)\|_{L^2(\mathbb T)}^2
+
\frac12
\langle
P_\beta u(t),u(t)
\rangle_{H^{1/2},H^{-1/2}}
=
\frac12
\|Z(t)\|_{\mathcal H}^2.
\]
Consequently, \eqref{eq:energy-polynomial-decay} holds.

It remains to prove the optimality of the decay rate. Assume, in
addition, that the damping acting on the first component is genuinely
localized, that is, \eqref{eq:undamped-open-set} is satisfied. By Theorem~\ref{thm:resolvent-not-uniformly-bounded}, there exists a sequence $\omega_N\longrightarrow\infty$ such that
\[
\left\|
(i\omega_NI-\mathcal A_d)^{-1}
\right\|_{\mathcal L(\mathcal H)}
\ge
c\omega_N.
\]
Hence the resolvent cannot satisfy
\[
\left\|
(i\tau I-\mathcal A_d)^{-1}
\right\|_{\mathcal L(\mathcal H)}
=
O(|\tau|^\alpha),
\qquad
\text{as }|\tau|\to\infty
\]
for any \(\alpha<1\).

Suppose, by contradiction, that for some \(p>1\),
\[
\left\|
S_d(t)\mathcal A_d^{-1}
\right\|_{\mathcal L(\mathcal H)}
=
O(t^{-p}),
\qquad
\text{as }t\to\infty.
\]
Setting $\alpha=\frac1p<1$,
the converse implication in
Theorem~\ref{thm:Borichev-Tomilov} would give
\[
\left\|
(i\tau I-\mathcal A_d)^{-1}
\right\|_{\mathcal L(\mathcal H)}
=
O(|\tau|^\alpha),
\]
contradicting the lower bound above. Therefore, the rate
\[
\left\|
S_d(t)\mathcal A_d^{-1}
\right\|_{\mathcal L(\mathcal H)}
=
O(t^{-1})
\]
is optimal in the polynomial scale. The same argument also yields the optimality of the corresponding
energy decay. Indeed, since
\[
E(t)
=
\frac12
\|S_d(t)Z_0\|_{\mathcal H}^2,
\]
any uniform estimate of the form
\[
E(t)
\le
C t^{-q}
\|Z_0\|_{D(\mathcal A_d)}^2,
\qquad
q>2,
\]
would imply a uniform state decay rate of order
\(t^{-q/2}\), with \(q/2>1\), contradicting the optimality established
above. Hence the energy decay rate \(t^{-2}\) is also optimal in the
polynomial scale.
\end{proof}

\section{Final considerations}
\label{sec:final-considerations}

In this work, we established the optimal polynomial stabilization of the
linearized periodic Whitham--Boussinesq system in its natural energy
space. The analysis reflects the sublinear high-frequency dispersion of
the model and relies on a scalar reduction of the coupled resolvent
problem, together with the fractional-wave stationary estimate of
Alazard, Marzuola, and Wang~\cite{AlazardMarzuolaWang}. This yields the
linear high-frequency resolvent bound for the damped generator.

As a consequence of the Borichev--Tomilov theorem
\cite{BorichevTomilov2010}, solutions with initial data in
\(D(\mathcal A_d)\) satisfy
\[
\|S_d(t)Z_0\|_{\mathcal H}=O(t^{-1}),
\qquad
E(t)=O(t^{-2}),
\qquad
t\to\infty.
\]
The resolvent upper bound holds without the additional
genuine-localization assumption. When the damping acting on the first
component is genuinely localized, the high-frequency quasimodes
constructed above provide the matching lower bound, showing that these
decay rates are optimal in the polynomial scale. 
\subsection{Open issues and future perspectives}

The preceding results naturally lead to several questions concerning
the nonlinear dynamics, the influence of the damping geometry, and
possible extensions to other nonlocal dispersive systems. We briefly
discuss three directions below.

\medskip

\noindent\textbf{(i) Nonlinear stabilization.} The present analysis is restricted to the linearized dynamics. A natural
next question is whether the polynomial stabilization mechanism persists
for the nonlinear periodic Whitham--Boussinesq system. Addressing this
problem would require combining the linear decay estimates obtained here
with a well-posedness theory adapted to the nonlocal structure of the
full equations. The polynomial, rather than exponential, decay of the
linear semigroup makes the long-time treatment of the nonlinear
interactions particularly delicate.

In a different framework, recent work by Cavalcanti, Domingos
Cavalcanti, and Mu\~noz Rivera
\cite{CavalcantiCavalcantiMunoz2026} develops a resolvent-based approach
to polynomial decay for semigroups generated by maximal monotone
operators. Whether aspects of this philosophy can be adapted to the
Whitham--Boussinesq dynamics remains an interesting open question.

\medskip

\noindent\textbf{(ii) Geometry and strength of the damping.}
The optimality argument relies on the existence of a nonempty open
region where the damping acting on the first component vanishes. The
quasimodes constructed under this assumption rule out exponential
stability and show the sharpness of the polynomial resolvent exponent
in the genuinely localized setting. They do not, however, apply when
the damping acts everywhere on the torus.

It is therefore natural to investigate how the support, strength, and
vanishing profile of the damping affect the high-frequency resolvent.
In particular, it remains open whether stronger or nondegenerate
damping assumptions can improve the resolvent growth and lead to faster
decay, possibly even exponential stabilization.

\medskip

\noindent\textbf{(iii) Other Whitham-type and full-dispersion systems.}
The present analysis suggests that quantitative stabilization
properties are closely related to the order of the dispersion relation
and to the associated high-frequency structure of the underlying
Fourier multipliers. It would be interesting to determine whether
variants of the present strategy extend to other coupled Whitham-type,
full-dispersion, or fractional dispersive systems, and how the order of
the dispersive operator influences the optimal resolvent growth and the
corresponding decay rate.

\medskip

These questions point toward a broader direction in which dispersion,
coupling, and damping geometry interact in determining the
high-frequency resolvent behavior and the long-time decay of nonlocal
dispersive systems.

\subsection*{Acknowledgements}  R. de A. Capistrano–Filho was partially supported by CAPES/CO\-FE\-CUB grant number 88887.879175/2023-00, CNPq grant numbers 301744/2025-4 and 421573/2023-6, and PROPG (UFPE) \textit{via} PROAP resources.

The Article Processing Charge (APC) for the publication of this research was funded by the Coordenação de Aperfeiçoamento de Pessoal de Nível Superior - Brasil (CAPES) (ROR identifier: 00x0ma614).

\subsection*{Conflicts of Interest} The authors declare no conflicts of interest.
\subsection*{Data Availability Statement} No new data were generated.

\end{document}